
\documentclass{amsart}%
\usepackage{amsmath}
\usepackage{color}
\usepackage{amssymb}
\usepackage{amsfonts}
\usepackage{graphicx}%
\setcounter{MaxMatrixCols}{30}
\newtheorem{theorem}{Theorem}[section]
\theoremstyle{plain}
\newtheorem{acknowledgement}{Acknowledgement}

\newtheorem{example}[theorem]{Examples}

\newtheorem{lemma}[theorem]{Lemma}

\newtheorem{proposition}[theorem]{Proposition}

\numberwithin{equation}{section}

\def \w{\mathcal W}

\begin{document}
\title[Entire solutions to a critical competitive system]{Existence and phase separation of entire solutions to a pure critical
competitive elliptic system}
\author{M\'{o}nica Clapp}
\address{Instituto de Matem\'{a}ticas, Universidad Nacional Aut\'{o}noma de M\'{e}xico,
Circuito Exterior, C.U., 04510 Coyoac\'{a}n, CDMX, Mexico}
\email{monica.clapp@im.unam.mx}
\author{Angela Pistoia}
\address{Dipartimento di Metodi e Modelli Matematici, Universit\`{a} di Roma
\textquotedblleft La Sapienza\textquotedblright, Roma, Italy}
\email{angela.pistoia@uniroma1.it}
\thanks{M. Clapp was partially supported by CONACYT grant 237661 (Mexico) and
UNAM-DGAPA-PAPIIT grant IN104315 (Mexico). A. Pistoia was partially supported
by Fondi di Ateneo ``Sapienza'' Universit\'a di Roma (Italy).}
\date{\today}

\begin{abstract}
We establish the existence of a positive fully nontrivial solution $(u,v)$ to
the weakly coupled elliptic system%
\[
\left\{
\begin{tabular}
[c]{l}%
$-\Delta u=\mu_{1}|u|^{{2}^{\ast}-2}u+\lambda\alpha|u|^{\alpha-2}|v|^{\beta
}u,$\\
$-\Delta v=\mu_{2}|v|^{{2}^{\ast}-2}v+\lambda\beta|u|^{\alpha}|v|^{\beta{-2}%
}v,$\\
$u,v\in D^{1,2}(\mathbb{R}^{N}),$%
\end{tabular}
\ \right.
\]
where $N\geq4,$ $2^{\ast}:=\frac{2N}{N-2}$ is the critical Sobolev exponent,
$\alpha,\beta\in(1,2],$ $\alpha+\beta=2^{\ast},$ $\mu_{1},\mu_{2}>0,$ and
$\lambda<0.$ We show that these solutions exhibit phase separation as
$\lambda\rightarrow-\infty,$ and we give a precise description of their limit domains.

If $\mu_{1}=\mu_{2}$ and $\alpha=\beta$, we prove that the system has
infinitely many fully nontrivial solutions, which are not conformally equivalent.\smallskip\ 

\textsc{Key words:} Competitive elliptic system; critical nonlinearity; entire
solution; phase separation.

\textsc{2010 MSC:} 35J47 (35B08, 35B33, 35B40, 35J20)

\end{abstract}

\maketitle

\baselineskip15pt

\section{Introduction}

We study the weakly coupled elliptic system
\begin{equation}
\left\{
\begin{tabular}
[c]{l}%
$-\Delta u=\mu_{1}|u|^{{2}^{\ast}-2}u+\lambda\alpha|u|^{\alpha-2}|v|^{\beta
}u,$\\
$-\Delta v=\mu_{2}|v|^{{2}^{\ast}-2}v+\lambda\beta|u|^{\alpha}|v|^{\beta{-2}%
}v,$\\
$u,v\in D^{1,2}(\mathbb{R}^{N}),$%
\end{tabular}
\ \right.  \label{system}%
\end{equation}
where $N\geq4,$ $2^{\ast}:=\frac{2N}{N-2}$ is the critical Sobolev exponent,
$\alpha,\beta\in(1,2],$ $\alpha+\beta=2^{\ast},$ $\mu_{1},\mu_{2}>0,$ and
$\lambda\in\mathbb{R}$.

The solutions to this system are solitary waves for a system of coupled
Gross-Pitaevskii equations. This type of systems arises, e.g., in the
Hartree-Fock theory for double condensates, that is, Bose-Einstein condensates
of two different hyperfine states which overlap in space; see \cite{egbb}. The
sign of $\mu_{i}$ reflects the interaction of the particles within each single
state. If $\mu_{i}$ is positive, this interaction is attractive. The sign of
$\lambda,$ on the other hand, reflects the interaction of particles in
different states. This interaction is attractive if $\lambda>0$ and it is
repulsive if $\lambda<0.$ If the condensates repel, they separate spatially.
This phenomenon is called phase separation and has been described in \cite{t}.

Motivated by their physical applications, weakly coupled elliptic systems have
received much attention in recent years, and there are many results for the
cubic case - where $\alpha=\beta=2$ and $2^{\ast}$ is replaced by $4$ - in low
dimensions $N\leq3$; see, e.g.,
\cite{ac,bdw,bww,dww,lw1,lw2,liw,mmp,si,so,sot,ww}. In this case, the
nonlinear terms are subcritical.

In contrast, there are only few results for the critical case. For a Brezis-Nirenberg type system in a bounded domain of dimension $N\geq4$ existence results were recently obtained by Chen and Zhou in \cite{cz1,cz2}. They also exhibited phase separation for $N\geq6.$ An unbounded sequence of sign-changing solutions for $N\geq7$ and $\alpha=\beta$ was obtained in \cite{llw}, and spiked solutions were constructed in \cite{pt} for $N=4.$ Some existence and multiplicity results for a Coron type system in a bounded domain with one or multiple small holes were recently obtained in \cite{ppw,ps}.

We are interested in solutions to the system (\ref{system}) in the whole space
$\mathbb{R}^{N}$. When $\lambda=0$ this system reduces to the single equation%
\begin{equation}
-\Delta w=|w|^{{2}^{\ast}-2}w,\text{\qquad}w\in D^{1,2}(\mathbb{R}^{N}).
\label{bc}%
\end{equation}
It is well known that the problem (\ref{bc}) has a positive solution and
infinitely many sign-changing solutions. Note that, if $w$ solves (\ref{bc}),
then $u=\mu_{1}^{\frac{2-N}{4}}w,$ $v=0,$ and $u=0,$ $v=\mu_{2}^{\frac{2-N}%
{4}}w,$ solve (\ref{system}). So the system has infinitely many solutions with
one trivial component. We are interested in solutions where both components,
$u$ and $v,$ are nontrivial. They are called \emph{fully nontrivial}
solutions. A solution is said to be \emph{positive} if $u\geq0$ and $v\geq0$,
and it is said to be \emph{synchronized} if it is of the form $(su,tu)$ with
$s,t\in\mathbb{R}$.

In the cooperative case, i.e., when $\lambda>0,$ Chen and Zou established the
existence of a positive least energy fully nontrivial solution to the system
(\ref{system}) with $\alpha=\beta=\frac{2^{\ast}}{2}$ for all $\lambda>0$ if
$N\geq5$ and for a wide range of $\lambda>0$ if $N=4;$ see \cite{cz1,cz2}.
Peng, Peng and Wang \cite{ppw} studied the system for $\mu_{1}=\mu_{2}=1,$
$\lambda=\frac{1}{2^{\ast}}$ and different values of $\alpha$ and $\beta,$ and
they obtained uniqueness and nondegeneracy results for positive synchronized solutions. Guo, Li and Wei studied the critical system (\ref{system}) in dimension $N=3$ for $\lambda <0$ and they established the existence of positive solutions with $k$ peaks for $k$ sufficiently large in \cite{glw}. In  \cite{ggt1, ggt2} Gladiali, Grossi and Troestler obtained radial and nonradial solutions to some critical systems using bifurcation methods. 

Here we focus our attention to the competitive case, i.e., to $\lambda<0.$ In
this case, the system (\ref{system}) does not have a least energy fully
nontrivial solution; see Proposition \ref{prop:nonexistence} below. This
behavior showcases the lack of compactness of the variational functional,
which comes from the fact that system is invariant under translations and
dilations, that allow functions to travel to infinity and to blow up without
changing their energy value.

But the conformal invariance of the system (\ref{system}) can also be used to
our advantage. There are groups of conformal transformations of $\mathbb{R}%
^{N}$ which have the property that all of their orbits have positive
dimension. So, as blow-up can only occur at points, looking for solutions
which are invariant under such group actions will restore compactness. W. Ding
used this fact in \cite{d}\ to establish the existence of infinitely many
sign-changing solutions for the single equation (\ref{bc}). Note that linear
isometries of $\mathbb{R}^{N}$, on the other hand, do not serve this purpose,
because the origin is always a fixed point.

Let $O(N+1)$ be the group of linear isometries of $\mathbb{R}^{N+1}$ and let
$\Gamma$ be a closed subgroup of $O(N+1).$ We write $\Gamma p:=\{\gamma p:\gamma\in\Gamma\}$ for the $\Gamma$-orbit of a point $p\in\mathbb{S}^{N}$. We shall look for solutions to the
system (\ref{system}) which are invariant under the conformal action of
$\Gamma$ on $\mathbb{R}^{N}$ induced by the stereographic projection
$\sigma:\mathbb{S}^{N}\rightarrow\mathbb{R}^{N}\cup\{\infty\}.$ Namely, for
each $\gamma\in\Gamma,$ we consider the map $\widetilde{\gamma}:\mathbb{R}%
^{N}\rightarrow\mathbb{R}^{N}$ given by $\widetilde{\gamma}x:=(\sigma
\circ\gamma^{-1}\circ\sigma^{-1})(x),$ which is well defined except at a
single point. The reason for considering this action is that $O(N+1)$ contains
subgroups $\Gamma$, which do not act transitively on $\mathbb{S}^N$ (i.e., $\Gamma p \neq \mathbb{S}^{N}$ for every $p\in\mathbb{S}^{N}$), with the property that the $\Gamma$-orbit $\Gamma p$ of every point $p\in\mathbb{S}^{N}$ has positive dimension. We may take, for example, $\Gamma:=O(m)\times O(n)$ with
$m+n=N+1,$ $m,n\geq2.$ These were the groups considered by W. Ding in
\cite{d}; see Examples \ref{example}\ below.

A function $u\ $will be said to be $\Gamma$\textit{-invariant} if
\[
\left\vert \det\widetilde{\gamma}^{\prime}(x)\right\vert ^{1/2^{\ast}%
}u(\widetilde{\gamma}x)=u(x)\text{\qquad for all \ }\gamma\in\Gamma,\text{
}x\in\mathbb{R}^{N}\text{,}%
\]
and a pair of functions $(u,v)$ will be said to be $\Gamma$\emph{-invariant}
if each of them is $\Gamma$-invariant. We will prove the following results.

\begin{theorem}
\label{thm:main}Let $\Gamma$ be a closed subgroup of $O(N+1)$ such that $\Gamma$ does not act transitively on $\mathbb{S}^{N}$ and the
$\Gamma$-orbit of every point $p\in\mathbb{S}^{N}$ has positive dimension.
Then, the following statements hold true:

\begin{enumerate}
\item[(a)] The system \emph{(\ref{system})} has a positive fully nontrivial
$\Gamma$-invariant solution for each $\lambda<0$.

\item[(b)] If $\mu_{1}=\mu_{2}=:\mu$ and $\alpha=\beta,$ then, for each
$\lambda\leq-\frac{\mu}{\alpha},$ the system \emph{(\ref{system})} has
infinitely many fully nontrivial $\Gamma$-invariant solutions, which are not
conformally equivalent.

\item[(c)] There exists a $\lambda_{\ast}<0,$ which depends on $\mu_{1}%
,\mu_{2},\alpha,\beta,$ such that the system \emph{(\ref{system})} does not
have a fully nontrivial synchronized solution if $\lambda<\lambda_{\ast}.$
\end{enumerate}
\end{theorem}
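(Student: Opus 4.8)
The plan is to argue variationally on the $\Gamma$-invariant subspace, using the conformal invariance to restore compactness and the competitive sign $\lambda<0$ to set up a Nehari-type constraint. Write $\mathcal{H}:=(D^{1,2}(\mathbb{R}^{N})^{\Gamma})^{2}$ and
\[
J_{\lambda}(u,v):=\tfrac12\int_{\mathbb{R}^{N}}(|\nabla u|^{2}+|\nabla v|^{2})-\tfrac1{2^{\ast}}\int_{\mathbb{R}^{N}}(\mu_{1}|u|^{2^{\ast}}+\mu_{2}|v|^{2^{\ast}})-\lambda\int_{\mathbb{R}^{N}}|u|^{\alpha}|v|^{\beta}.
\]
Through the stereographic projection $\Gamma$-invariant functions on $\mathbb{R}^{N}$ correspond to $\Gamma$-invariant functions on $\mathbb{S}^{N}$, and since every $\Gamma$-orbit on $\mathbb{S}^{N}$ has positive dimension the embedding $H^{1}(\mathbb{S}^{N})^{\Gamma}\hookrightarrow L^{2^{\ast}}(\mathbb{S}^{N})$ is compact; hence the nonlinear parts of $J_{\lambda}$ are weakly continuous on $\mathcal{H}$, and by the principle of symmetric criticality it suffices to find critical points of $J_{\lambda}$ on $\mathcal{H}$. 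The main technical step, which I expect to be the principal obstacle, is a fibering lemma: for every $(u,v)\in\mathcal{H}$ with $u\neq0$ and $v\neq0$ there is a \emph{unique} $(s,t)\in(0,\infty)^{2}$ with $(su,tv)$ on the Nehari set $\mathcal{N}_{\lambda}:=\{(u,v)\in\mathcal{H}:u\neq0,\ v\neq0,\ J_{\lambda}'(u,v)[(u,0)]=J_{\lambda}'(u,v)[(0,v)]=0\}$, this $(s,t)$ is the global maximum of $(s,t)\mapsto J_{\lambda}(su,tv)$, and the $2\times2$ Jacobian of the two Nehari identities in the scaling directions stays nondegenerate along $\mathcal{N}_{\lambda}$ (here $\lambda<0$ and $\alpha,\beta\le2<2^{\ast}$ enter in an essential way), so $\mathcal{N}_{\lambda}$ is a natural constraint. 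Two elementary consequences: $J_{\lambda}(u,v)=\tfrac1N\int_{\mathbb{R}^{N}}(|\nabla u|^{2}+|\nabla v|^{2})$ on $\mathcal{N}_{\lambda}$, and, since $J_{\lambda}'(u,v)[(u,0)]=0$ together with $\lambda<0$ gives $\int_{\mathbb{R}^{N}}|\nabla u|^{2}\le\mu_{1}\int_{\mathbb{R}^{N}}|u|^{2^{\ast}}$, the Sobolev inequality forces $\int_{\mathbb{R}^{N}}|u|^{2^{\ast}}$ and $\int_{\mathbb{R}^{N}}|v|^{2^{\ast}}$ to be bounded below by a positive constant on $\mathcal{N}_{\lambda}$; in particular $c_{\lambda}:=\inf_{\mathcal{N}_{\lambda}}J_{\lambda}>0$.

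For part (a), take a minimizing sequence $(u_{n},v_{n})\subset\mathcal{N}_{\lambda}$ for $c_{\lambda}$; it is bounded in $\mathcal{H}$, so up to a subsequence $u_{n}\rightharpoonup u$, $v_{n}\rightharpoonup v$ and, by the compact embedding, $u_{n}\to u$, $v_{n}\to v$ in $L^{2^{\ast}}(\mathbb{R}^{N})$. The $L^{2^{\ast}}$-lower bound passes to the limit, so $u\neq0$ and $v\neq0$; by the fibering lemma there is $(s_{\ast},t_{\ast})\in(0,\infty)^{2}$ with $(s_{\ast}u,t_{\ast}v)\in\mathcal{N}_{\lambda}$, and since $J_{\lambda}(u_{n},v_{n})=\max_{\sigma,\eta>0}J_{\lambda}(\sigma u_{n},\eta v_{n})\ge J_{\lambda}(s_{\ast}u_{n},t_{\ast}v_{n})$, taking $\liminf$ (the nonlinear terms converge by strong $L^{2^{\ast}}$-convergence, the Dirichlet parts are weakly lower semicontinuous) yields $c_{\lambda}\ge J_{\lambda}(s_{\ast}u,t_{\ast}v)\ge c_{\lambda}$, so $(s_{\ast}u,t_{\ast}v)$ attains $c_{\lambda}$ on $\mathcal{N}_{\lambda}$. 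As $\mathcal{N}_{\lambda}$ is a natural constraint, it solves the system; replacing it by $(|s_{\ast}u|,|t_{\ast}v|)$, which stays in $\mathcal{N}_{\lambda}$ with the same energy, I may assume both components nonnegative, and the strong maximum principle (after the standard regularity, using $\alpha-1,\beta-1>0$ so that the coupling terms vanish at the zero sets) produces a positive solution.

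For part (b), with $\mu_{1}=\mu_{2}=:\mu$ and $\alpha=\beta$, both $J_{\lambda}$ and $\mathcal{N}_{\lambda}$ are invariant under the involution $\tau(u,v):=(v,u)$, whose fixed-point set is the diagonal; but $(u,u)\in\mathcal{N}_{\lambda}$ would require $\int_{\mathbb{R}^{N}}|\nabla u|^{2}=(\mu+\lambda\alpha)\int_{\mathbb{R}^{N}}|u|^{2^{\ast}}$, impossible for $u\neq0$ once $\mu+\lambda\alpha\le0$, i.e. once $\lambda\le-\mu/\alpha$. So for such $\lambda$ the $\mathbb{Z}_{2}$-action generated by $\tau$ is free on $\mathcal{N}_{\lambda}$. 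Since the hypotheses on $\Gamma$ make $D^{1,2}(\mathbb{R}^{N})^{\Gamma}$ infinite dimensional, for each $k$ I can build a continuous map $\psi\colon\mathbb{S}^{k-1}\to\mathcal{N}_{\lambda}$ with $\psi(-\xi)=\tau\psi(\xi)$: pick linearly independent $e_{1},\dots,e_{k}\in D^{1,2}(\mathbb{R}^{N})^{\Gamma}$ and a fixed $e_{0}$ of large norm, set $u(\xi):=e_{0}+\sum_{i}\xi_{i}e_{i}\neq0$, and let $\psi(\xi):=(s(\xi)u(\xi),t(\xi)u(-\xi))$ be the unique rescaling onto $\mathcal{N}_{\lambda}$; uniqueness together with $\tau$-invariance of $\mathcal{N}_{\lambda}$ forces $t(\xi)=s(-\xi)$, so $\psi$ is equivariant, and therefore the genus of $\mathcal{N}_{\lambda}$ for this free $\mathbb{Z}_{2}$-action is infinite. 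Because $J_{\lambda}|_{\mathcal{N}_{\lambda}}=\tfrac1N\int_{\mathbb{R}^{N}}(|\nabla u|^{2}+|\nabla v|^{2})$ is bounded below and, by the compact embedding together with the natural-constraint property, satisfies the Palais--Smale condition, the usual $\mathbb{Z}_{2}$-minimax scheme produces critical values $c_{1}\le c_{2}\le\cdots\to\infty$, hence infinitely many fully nontrivial $\Gamma$-invariant solutions. Since this sequence takes infinitely many distinct values and $J_{\lambda}$ (the Dirichlet energy, the $2^{\ast}$-terms, and the coupling term with $\alpha+\beta=2^{\ast}$) is invariant under the full conformal group, solutions at distinct levels are not conformally equivalent.

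For part (c), a fully nontrivial synchronized solution has the form $(v_{1},\kappa v_{1})$ with $\kappa\neq0$ and $v_{1}\not\equiv0$; substituting and writing $\rho:=|\kappa|$, both equations reduce to $-\Delta v_{1}=c\,|v_{1}|^{2^{\ast}-2}v_{1}$, with $c=\mu_{1}+\lambda\alpha\rho^{\beta}$ from the first and $c=\mu_{2}\rho^{2^{\ast}-2}+\lambda\beta\rho^{\beta-2}$ from the second (after dividing by $\kappa$), and a nontrivial $v_{1}\in D^{1,2}(\mathbb{R}^{N})$ exists only when $c>0$, i.e. only for $\rho<\rho_{0}:=(\mu_{1}/(|\lambda|\alpha))^{1/\beta}$. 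It then suffices to show that $h(\rho):=(\mu_{1}+\lambda\alpha\rho^{\beta})-(\mu_{2}\rho^{2^{\ast}-2}+\lambda\beta\rho^{\beta-2})$ has no zero on $(0,\rho_{0})$ for $\lambda$ negative enough: on that interval $\mu_{1}-|\lambda|\alpha\rho^{\beta}>0$, $\rho^{2^{\ast}-2}\le\rho_{0}^{2^{\ast}-2}$, and (using $\beta\le2$) $\rho^{\beta-2}\ge\rho_{0}^{\beta-2}$, whence $h(\rho)>\rho_{0}^{\beta-2}\bigl(|\lambda|\beta-\mu_{2}\rho_{0}^{\alpha}\bigr)$ (using $2^{\ast}-\beta=\alpha$), which is strictly positive once $|\lambda|$ is large because $\rho_{0}\to0$ as $\lambda\to-\infty$. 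This yields the claimed $\lambda_{\ast}<0$, depending only on $\mu_{1},\mu_{2},\alpha,\beta$. Of the three parts, (c) is entirely elementary; the real work lies in the fibering/natural-constraint lemma (uniqueness of the rescaling and nondegeneracy of the constraint, both resting on $\lambda<0$) and, in (b), in checking Palais--Smale on $\mathcal{N}_{\lambda}$ and the infinitude of the genus.
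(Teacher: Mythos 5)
Your part (c) is correct and essentially as elementary as the paper's (you show the two scalar equations forced by a synchronized ansatz become incompatible for large $|\lambda|$; the paper shows, via the same kind of estimates, that no synchronized pair even lies on the Nehari manifold). The trouble is in (a) and (b), and it centres on the ``fibering lemma'' that you correctly flag as the principal obstacle but then assert without proof --- and which is in fact \emph{false} as stated. You claim that for every $(u,v)$ with $u\neq0$, $v\neq0$ there is a (unique) $(s,t)\in(0,\infty)^{2}$ with $(su,tv)\in\mathcal{N}_{\lambda}$. Take $v=u$: the two Nehari constraints $f(su,tu)=0$, $h(su,tu)=0$ are then precisely the algebraic system that the paper's Proposition~\ref{prop:nonsyncronized} shows has \emph{no} solution once $\lambda<\lambda_{*}$. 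So for $\lambda$ negative enough a nonzero synchronized pair has no Nehari projection at all; existence fails before uniqueness even enters. The paper only ever proves the weaker Proposition~\ref{prop:nehari}(d): \emph{if} $(u,v)\in\mathcal{N}$, then $(1,1)$ is the unique critical point of $(s,t)\mapsto E(su,tv)$ and is a global maximum.

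In (a) you use the fibering lemma to project the weak limit $(u,v)$ of a minimizing sequence back onto $\mathcal{N}_{\lambda}$. This particular projection does exist, but not for the reason you give: because the Dirichlet parts are weakly lower semicontinuous and the remaining integrals converge strongly, one gets $f(u,v)\le0$ and $h(u,v)\le0$ in the limit, and under those two sign constraints a short IVT/Brouwer argument produces a zero of $(s,t)\mapsto(f(su,tv),h(su,tv))$ in a box $[s_{0},1]\times[t_{0},1]$. You would need to supply this. The paper avoids the issue entirely: it verifies that $\mathcal{N}^{\Gamma}$ is a closed $\mathcal{C}^{1}$-manifold on which $E$ is bounded below and satisfies the Palais--Smale condition (via the compact $\Gamma$-embedding), and then invokes Szulkin's minimization theorem, which never requires projecting an arbitrary pair onto the constraint.

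In (b) the gap is more serious. Your map $\psi(\xi)=(s(\xi)u(\xi),\,t(\xi)u(-\xi))$ with $u(\xi)=e_{0}+\sum_{i}\xi_{i}e_{i}$ and $\|e_{0}\|$ large produces, near the poles of $\mathbb{S}^{k-1}$, pairs $(u(\xi),u(-\xi))$ that are nearly synchronized ($u(\xi)\approx u(-\xi)\approx e_{0}$), and those are exactly the pairs for which the Nehari projection can fail; $\psi$ is then simply undefined. The paper's construction in Theorem~\ref{thm:multiplicity} is built to avoid this: it takes $2j$ pairwise disjoint $\Gamma$-invariant open sets and $\Gamma$-invariant bump functions supported in them, defines the map on the boundary of the cross-polytope $Q$ by convex combinations of these pairs, and observes that along the entire image the two components have \emph{disjoint supports}, so the coupling term is identically zero and the Nehari projection reduces to two independent scalar rescalings (always well defined). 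Equivariance and genus $\ge j$ are then immediate via Borsuk--Ulam. To repair your (b) you would either need to mimic this disjoint-support construction or prove a projection lemma under a quantitative smallness condition on $\int|u|^{\alpha}|v|^{\beta}$ and verify that your sphere stays in that regime --- which the ``$e_{0}$ of large norm'' construction does not.
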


The next result says that there is phase separation for the positive solutions.

\begin{theorem}
\label{thm:separation}Assume that $\Gamma$ does not act transitively on $\mathbb{S}^{N}$ and that the $\Gamma$-orbit of every point $p\in\mathbb{S}^{N}$ has positive dimension. For $\lambda_{k}<0$ with
$\lambda_{k}\rightarrow-\infty$ let $(u_{k},v_{k})$ be the positive fully
nontrivial $\Gamma$-invariant solution for the system \emph{(\ref{system})}
with $\lambda=\lambda_{k}$ given by \emph{Theorem \ref{thm:main}(a)}. Then,
after passing to a subsequence, we have that $u_{k}\rightarrow u_{\infty}$ and
$v_{k}\rightarrow v_{\infty}$ strongly in $D^{1,2}(\mathbb{R}^{N}),$ the
functions $u_{\infty}$ and $v_{\infty}\ $are continuous, $u_{\infty}\geq0,$
$v_{\infty}\geq0,\ u_{\infty}v_{\infty}\equiv0,$ $u_{\infty}$ solves the
problem%
\[
-\Delta u=\mu_{1}|u|^{{2}^{\ast}-2}u,\text{\qquad}u\in D_{0}^{1,2}(\Omega
_{1}),
\]
and $v_{\infty}$ solves the problem%
\[
-\Delta v=\mu_{2}|v|^{{2}^{\ast}-2}v,\qquad v\in D_{0}^{1,2}(\Omega_{2}),
\]
where $\Omega_{1}:=\{x\in\mathbb{R}^{N}:u_{\infty}(x)>0\}$ and $\Omega
_{2}:=\{x\in\mathbb{R}^{N}:v_{\infty}(x)>0\}.$ Moreover, $\Omega_{1}$ and $\Omega_{2}$ are $\Gamma$-invariant and connected, $\Omega_{1}\cap\Omega_{2}=\emptyset$ and $\overline{\Omega_{1}\cup\Omega_{2}}=\mathbb{R}^{N}.$
\end{theorem}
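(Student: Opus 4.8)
\emph{The plan} is to exploit the compactness provided by the $\Gamma$-symmetry (which rules out blow-up) and then run the standard phase-separation machinery for strongly competing systems. Write $J_\lambda$ for the energy functional of \eqref{system} and $\|\cdot\|$ for the $D^{1,2}(\mathbb{R}^N)$-norm.

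\emph{Step 1 (uniform bounds and convergence).} First I would note that the variational level at which the solutions of Theorem \ref{thm:main}(a) are produced is bounded above uniformly in $k$: comparing with a pair of $\Gamma$-invariant functions having disjoint supports makes the coupling term vanish and gives a $\lambda$-independent bound. Since on a solution of \eqref{system} the Nehari identities together with $\alpha+\beta=2^{\ast}$ yield $J_\lambda(u,v)=\tfrac1N(\|u\|^2+\|v\|^2)$, this makes $\|u_k\|,\|v_k\|$ bounded; testing the first equation with $u_k$ and using the Sobolev inequality gives in addition $\|u_k\|,\|v_k\|\ge c>0$. Passing to a subsequence, $u_k\rightharpoonup u_\infty$, $v_k\rightharpoonup v_\infty$ weakly in $D^{1,2}(\mathbb{R}^N)$, and here the hypothesis on $\Gamma$ is used: the embedding $D^{1,2}(\mathbb{R}^N)^\Gamma\hookrightarrow L^{2^\ast}(\mathbb{R}^N)$ is compact (the same fact used to prove Theorem \ref{thm:main}), so $u_k\to u_\infty$, $v_k\to v_\infty$ strongly in $L^{2^\ast}(\mathbb{R}^N)$ and a.e.; thus $u_\infty,v_\infty\ge0$ and both are nontrivial (by the lower bound $\|u_k\|\ge c$ and strong $L^{2^\ast}$ convergence). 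From $\|u_k\|^2=\mu_1|u_k|_{2^\ast}^{2^\ast}+\lambda_k\alpha\int|u_k|^{\alpha}|v_k|^{\beta}$ and $\lambda_k<0$ we get $|\lambda_k|\alpha\int|u_k|^{\alpha}|v_k|^{\beta}=\mu_1|u_k|_{2^\ast}^{2^\ast}-\|u_k\|^2\le C$, so $\int|u_k|^{\alpha}|v_k|^{\beta}\to0$ and, by Fatou, $\int u_\infty^{\alpha}v_\infty^{\beta}=0$, i.e. $u_\infty v_\infty\equiv0$; in particular $\Omega_1\cap\Omega_2=\emptyset$.

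\emph{Step 2 (uniform regularity --- the main obstacle).} The genuinely hard point is to obtain bounds on $(u_k,v_k)$ that are uniform in $k$. Since $\lambda_k<0$, each of $u_k,v_k$ is a nonnegative subsolution of $-\Delta w=\mu_i|w|^{2^\ast-2}w$; combined with the absence of escaping $L^{2^\ast}$-mass from Step 1, an $\varepsilon$-regularity / Moser iteration argument yields local $L^\infty$ bounds uniform in $k$. To pass to the limit in the equations one needs more, namely \emph{uniform local H\"older bounds} for $(u_k,v_k)$: this is exactly the kind of estimate established for strongly competing systems via an Almgren-type monotonicity formula (Conti--Terracini--Verzini, Noris--Tavares--Terracini--Verzini), adapted to the present setting, where the critical terms $\mu_i|w|^{2^\ast-2}w$ are harmless lower-order perturbations once the uniform $L^\infty_{\mathrm{loc}}$ bound is in hand. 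This adaptation is the technical heart of the proof. Granting it, Arzel\`a--Ascoli together with the a.e.\ convergence give $u_k\to u_\infty$ and $v_k\to v_\infty$ in $C^0_{\mathrm{loc}}(\mathbb{R}^N)$; in particular $u_\infty,v_\infty$ are continuous and $\Omega_1=\{u_\infty>0\}$, $\Omega_2=\{v_\infty>0\}$ are open.

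\emph{Step 3 (limit problems and strong convergence).} Fix a compact $K\subset\Omega_1$, so $u_\infty\ge\delta>0$ on $K$, hence $u_k\ge\delta/2$ on $K$ for large $k$, while $v_\infty\equiv0$ on $\Omega_1$. Pick $\eta\in C_c^\infty(\Omega_1)$ with $\eta\equiv1$ on $K$ and test the second equation of \eqref{system} with $v_k\eta^2$; integrating by parts,
\[
\int|\nabla(v_k\eta)|^2+|\lambda_k|\beta\int u_k^{\alpha}v_k^{\beta}\eta^2=\int v_k^2|\nabla\eta|^2+\mu_2\int v_k^{2^\ast}\eta^2 ,
\]
and the right-hand side tends to $0$ because $v_k\to v_\infty=0$ in $L^2_{\mathrm{loc}}$ and in $L^{2^\ast}_{\mathrm{loc}}$ on $\operatorname{supp}\eta\subset\Omega_1$. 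Hence $|\lambda_k|\int_K u_k^{\alpha}v_k^{\beta}\to0$, so for every $\varphi\in C_c^\infty(\Omega_1)$ with $\operatorname{supp}\varphi\subset K$,
\[
\Bigl|\lambda_k\alpha\int u_k^{\alpha-1}v_k^{\beta}\varphi\Bigr|\le\frac{2\|\varphi\|_\infty}{\delta}\,|\lambda_k|\alpha\int_K u_k^{\alpha}v_k^{\beta}\longrightarrow0 .
\]
Passing to the limit in the weak form of the first equation gives $-\Delta u_\infty=\mu_1 u_\infty^{2^\ast-1}$ in $\Omega_1$; since $u_\infty\ge0$ lies in $D^{1,2}(\mathbb{R}^N)$ and vanishes outside $\Omega_1$ it belongs to $D_0^{1,2}(\Omega_1)$, and the symmetric argument handles $v_\infty$ on $\Omega_2$. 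For strong $D^{1,2}$-convergence, test the first equation with $u_k$ and let $k\to\infty$: using $|u_k|_{2^\ast}^{2^\ast}\to|u_\infty|_{2^\ast}^{2^\ast}$ and $\|u_\infty\|^2=\mu_1|u_\infty|_{2^\ast}^{2^\ast}$ (obtained by testing the limit equation with $u_\infty$), weak lower semicontinuity of the norm forces $\liminf(\lambda_k\alpha\int u_k^{\alpha}v_k^{\beta})\ge0$; since this quantity is $\le0$ it tends to $0$, whence $\|u_k\|^2\to\|u_\infty\|^2$ and, with the weak convergence, $u_k\to u_\infty$ strongly in $D^{1,2}(\mathbb{R}^N)$; likewise $v_k\to v_\infty$.

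\emph{Step 4 (the limit domains).} Disjointness of $\Omega_1,\Omega_2$ was obtained in Step 1. As a weak limit of $\Gamma$-invariant functions $u_\infty$ is itself $\Gamma$-invariant, so $\Omega_1$ (and likewise $\Omega_2$) is $\Gamma$-invariant. For connectedness of $\Omega_1,\Omega_2$ and for $\overline{\Omega_1\cup\Omega_2}=\mathbb{R}^N$ I would argue by minimality: tracking the variational characterization of $(u_k,v_k)$ through the limit shows that $(u_\infty,v_\infty)$ realizes the least $\Gamma$-invariant energy among pairs supported on disjoint $\Gamma$-invariant open sets; if $\Omega_1\cup\Omega_2$ omitted a ball one could enlarge $\Omega_1$ and, by monotonicity of the least energy with respect to the domain, strictly lower the energy, and if $\Omega_1$ split into pieces one could discard one piece and again strictly lower the energy — both contradicting minimality (these are standard arguments in the theory of optimal partitions). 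This completes the plan.
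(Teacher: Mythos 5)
Your proposal is plausibly correct as a road map, but it takes a genuinely different -- and considerably heavier -- route than the paper, particularly at the point you yourself flag as ``the main obstacle.''

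In Step 2 you propose to establish the continuity of $u_\infty,v_\infty$ by proving \emph{uniform-in-$k$} local $L^\infty$ and H\"older bounds for $(u_k,v_k)$ via an Almgren-type monotonicity formula, in the spirit of the Conti--Terracini--Verzini and Noris--Tavares--Terracini--Verzini machinery for strongly competing systems. The paper avoids this entirely. Its key structural observation is that, once one knows $u_\infty\ge 0$, $v_\infty\ge 0$, $u_\infty v_\infty\equiv 0$ and $u_\infty,v_\infty\in\mathcal{M}^\Gamma$, the single function $w_\infty:=u_\infty-v_\infty$ belongs to $\mathcal{E}^\Gamma$ and is a minimizer of the auxiliary scalar functional $J$ on $\mathcal{E}^\Gamma$; the Castro--Cossio--Neuberger argument (together with the Palais--Smale condition supplied by the compact embedding $D^{1,2}(\mathbb{R}^N)^\Gamma\hookrightarrow L^{2^\ast}$) then shows $w_\infty$ is a free critical point of $J$, i.e.\ a sign-changing solution of the \emph{single} scalar equation $-\Delta w=\mu_1|w^+|^{2^\ast-2}w^++\mu_2|w^-|^{2^\ast-2}w^-$ on all of $\mathbb{R}^N$. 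Standard elliptic regularity for that one equation (Appendix B of Struwe) gives $w_\infty\in C^1(\mathbb{R}^N)$ for free, and $u_\infty=w_\infty^+$, $v_\infty=-w_\infty^-$ are then automatically continuous, so $\Omega_1,\Omega_2$ are open. No uniform-in-$k$ regularity for the competing system is needed at all, and the density $\overline{\Omega_1\cup\Omega_2}=\mathbb{R}^N$ comes immediately from the unique continuation principle applied to $w_\infty$, rather than from an optimal-partition domain-perturbation argument. In short: the paper trades the hard uniform-H\"older estimates you defer for a soft variational fact ($w_\infty$ minimizes $J$ on the sign-changing Nehari set), which is cheaper to establish and yields continuity, the limit PDE, connectedness, and density all at once.

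A second, smaller difference: the paper derives the strong $D^{1,2}$ convergence directly from the sandwich of inequalities $c_\infty^\Gamma\le J(su_\infty-tv_\infty)\le\liminf E_{\lambda_k}(su_k,tv_k)\le\liminf c_{\lambda_k}^\Gamma\le c_\infty^\Gamma$ (forcing every inequality to be an equality), which simultaneously proves that $(-\lambda_k)\int|u_k|^\alpha|v_k|^\beta\to 0$ and that $u_\infty,v_\infty\in\mathcal{M}^\Gamma$; your Step 3 instead recovers strong convergence a posteriori by testing the equation, which is also valid but requires you to already know the limit equation holds in $\Omega_1$, which in your scheme depends on the harder Step 2. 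If you want to keep your structure, you should at least verify carefully that the Almgren adaptation works with the critical nonlinearities $|w|^{2^\ast-2}w$ on $\mathbb{R}^N$ and not just on bounded domains; the paper's route makes this entirely unnecessary.
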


We wish to stress that Theorem \ref{thm:separation} gives very precise information on the domains $\Omega_{1}$ and $\Omega_{2}$, as the following result shows.

\begin{proposition}
\label{prop:tori}
Let $\Gamma:=O(m)\times O(n)$ with $m+n=N+1,$ $m,n\geq2$. Then, after adding a point at infinity and up to relabeling, the domains $\Omega_{1}$ and $\Omega_2$ given by \emph{Theorem \ref{thm:separation}} have the following shape: $\Omega_{1}$ is diffeomorphic to $\mathbb{S}^{m-1}\times \mathbb{B}^{n}$, $\Omega_{2}$ is diffeomorphic to $\mathbb{B}^{m}\times \mathbb{S}^{n-1}$, and their common boundary is diffeomorphic to $\mathbb{S}^{m-1}\times \mathbb{S}^{n-1}$, where $\mathbb{B}^{k}$ and $\mathbb{S}^{k-1}$ denote the open unit ball and the unit sphere in $\mathbb{R}^{k}$ respectively.
\end{proposition}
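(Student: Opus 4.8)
The plan is to pull back the phase-separation picture of Theorem \ref{thm:separation} from $\mathbb{R}^{N}$ to the sphere $\mathbb{S}^{N}$ via the stereographic projection $\sigma$, where the action of $\Gamma=O(m)\times O(n)$ is just the standard linear action. Writing $\mathbb{R}^{N+1}=\mathbb{R}^{m}\times\mathbb{R}^{n}$, a point of $\mathbb{S}^{N}$ is a pair $(y,z)$ with $|y|^{2}+|z|^{2}=1$, and the two quantities $s:=|y|$ and $t:=|z|=\sqrt{1-s^{2}}$ are a complete set of $\Gamma$-invariants: the orbit space $\mathbb{S}^{N}/\Gamma$ is the closed interval parametrized by $s\in[0,1]$ (equivalently by the angle $\theta\in[0,\pi/2]$ with $s=\cos\theta$, $t=\sin\theta$), with the two endpoints $s=0$ and $s=1$ corresponding to the singular orbits $\{0\}\times\mathbb{S}^{n-1}\cong\mathbb{S}^{n-1}$ and $\mathbb{S}^{m-1}\times\{0\}\cong\mathbb{S}^{m-1}$. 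Since $\Gamma$ acts with cohomogeneity one, every $\Gamma$-invariant open set in $\mathbb{S}^{N}$ is a union of orbits over an open subset of this interval, and every $\Gamma$-invariant connected open set corresponds to a subinterval.

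First I would transport $u_{\infty}$ and $v_{\infty}$ to $\Gamma$-invariant continuous functions $\widehat{u},\widehat{v}$ on $\mathbb{S}^{N}$ (after adding the point at infinity, using that the conformal factor $|\det\widetilde{\gamma}'|^{1/2^{\ast}}$ is exactly what makes $\Gamma$-invariance on $\mathbb{R}^{N}$ correspond to invariance under the linear action on $\mathbb{S}^{N}$, and using the decay of $D^{1,2}$-solutions of the critical equation near infinity to see that the transported functions extend continuously across $\sigma^{-1}(\infty)$). By Theorem \ref{thm:separation} the sets $\widehat{\Omega}_{1}:=\{\widehat{u}>0\}$ and $\widehat{\Omega}_{2}:=\{\widehat{v}>0\}$ are disjoint, $\Gamma$-invariant, connected, open, and $\overline{\widehat{\Omega}_{1}\cup\widehat{\Omega}_{2}}=\mathbb{S}^{N}$. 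Passing to the orbit interval, $\widehat{\Omega}_{1}$ and $\widehat{\Omega}_{2}$ become disjoint open subintervals whose closures cover $[0,1]$; hence there is a single separating value $\theta_{0}\in(0,\pi/2)$, and — after relabeling — $\widehat{\Omega}_{1}$ is the slab $\{\theta<\theta_{0}\}$ (containing the singular orbit $\mathbb{S}^{m-1}$) while $\widehat{\Omega}_{2}$ is the slab $\{\theta>\theta_{0}\}$ (containing $\mathbb{S}^{n-1}$), with common boundary the principal orbit $\{\theta=\theta_{0}\}$. The endpoints $s=0,1$ cannot lie in the common boundary because a neighborhood of a singular orbit is connected and must be entirely in one of the two sets or else $u_\infty, v_\infty$ would both vanish on an open set contradicting connectedness together with $\overline{\Omega_1\cup\Omega_2}=\mathbb R^N$; this forces $\theta_0$ to be interior.

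It then remains to identify the diffeomorphism types. The principal orbit $\{\theta=\theta_{0}\}$ is $\{(y,z):|y|=\cos\theta_{0},\,|z|=\sin\theta_{0}\}\cong\mathbb{S}^{m-1}\times\mathbb{S}^{n-1}$. The region $\widehat{\Omega}_{1}=\{(y,z)\in\mathbb{S}^{N}:|y|>\cos\theta_{0}\}$ retracts onto the singular orbit $\mathbb{S}^{m-1}\times\{0\}$ and is in fact diffeomorphic to a tubular neighborhood of it; since the normal bundle of $\mathbb{S}^{m-1}\times\{0\}$ in $\mathbb{S}^{N}$ is trivial of rank $n$ (it is the restriction of the trivial $\mathbb{R}^{n}$-factor), this neighborhood is diffeomorphic to $\mathbb{S}^{m-1}\times\mathbb{B}^{n}$; an explicit diffeomorphism is $(y,z)\mapsto\bigl(y/|y|,\,z\cdot(\text{a smooth reparametrization of the radial coordinate onto }\mathbb{B}^{n})\bigr)$. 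Symmetrically $\widehat{\Omega}_{2}\cong\mathbb{B}^{m}\times\mathbb{S}^{n-1}$. Finally one checks that removing $\sigma^{-1}(\infty)$ and pushing forward by $\sigma$ does not change the diffeomorphism types of $\Omega_{1},\Omega_{2}$ and their common boundary, which is the content of the phrase ``after adding a point at infinity'' in the statement: the point at infinity lies in exactly one of $\overline{\widehat{\Omega}_{1}}$, $\overline{\widehat{\Omega}_{2}}$, or on the separating orbit, and in each case the claimed description on $\mathbb{S}^{N}$ descends verbatim. The only genuinely delicate point is the continuous extension of $u_{\infty},v_{\infty}$ across the point at infinity and the resulting regularity on $\mathbb{S}^{N}$; everything after that is elementary cohomogeneity-one bookkeeping.
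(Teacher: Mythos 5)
Your orbit-space reduction is the same as the paper's (the paper packages it as Lemma~\ref{lem:tori}, using the map $\pi(x,y)=(|x|,|y|)$ onto the quarter-circle $A$), and the cohomogeneity-one bookkeeping identifying the slabs $\{\theta<\theta_0\}$, $\{\theta>\theta_0\}$ with $\mathbb{S}^{m-1}\times\mathbb{B}^{n}$, $\mathbb{B}^{m}\times\mathbb{S}^{n-1}$ is correct and agrees with the paper. However, there is a genuine gap at the step you call ``bookkeeping'' and a misplaced worry at the step you call ``delicate.''

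The real issue is showing that the singular orbits actually lie \emph{inside} $\widehat{\Omega}_1$, $\widehat{\Omega}_2$ rather than on their boundaries. A priori it is entirely consistent with Theorem~\ref{thm:separation} that $u_\infty$ vanishes on the singular orbit $\mathbb{S}^{m-1}\times\{0\}$, so that $\widehat{\Omega}_1=\{0<\theta<\theta_0\}$: since $n\geq 2$ this set is still open, connected, $\Gamma$-invariant, disjoint from $\widehat{\Omega}_2$, and $\overline{\widehat{\Omega}_1\cup\widehat{\Omega}_2}=\mathbb{S}^N$ still holds (the complement is the union of the singular orbit and the separating principal orbit, which is nowhere dense). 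But $\{0<\theta<\theta_0\}$ is \emph{not} diffeomorphic to $\mathbb{S}^{m-1}\times\mathbb{B}^n$. Your argument that the singular orbit ``must be entirely in one of the two sets or else $u_\infty,v_\infty$ would both vanish on an open set'' does not work: the singular orbit has measure zero (codimension $n\geq 2$), so both functions vanishing there is not the same as vanishing on an open set, and nothing in the hypotheses of Theorem~\ref{thm:separation} alone rules this out. The paper closes this gap analytically: since $\mathbb{S}^{m-1}\times\{0\}$ and $\{0\}\times\mathbb{S}^{n-1}$ have codimension at least $2$, they have zero $D^{1,2}$-capacity, so $D^{1,2}_0(\Omega_i)=D^{1,2}_0(\widetilde{\Omega}_i)$; then $u_\infty$ solves the single equation on the enlarged domain $\widetilde{\Omega}_1$, and the strong maximum principle forces $u_\infty>0$ on all of $\widetilde{\Omega}_1$, hence $\widetilde{\Omega}_1=\Omega_1$. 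This removability-plus-maximum-principle step is exactly the content you are missing, and it is also precisely what handles the point at infinity (which lies on the singular orbit $\{0\}\times\mathbb{S}^{n-1}$), so the separate ``decay estimate'' argument you sketch for extending across $\sigma^{-1}(\infty)$ is not needed.
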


This paper is organized as follows. In Section \ref{sec:variational} we
discuss the variational setting and we prove part (c) of Theorem
\ref{thm:main}. Part (a) is proved in Section \ref{sec:existence}\ and part
(b) in Section \ref{sec:multiplicity}. Section \ref{sec:separation}\ is
devoted to the proof of Theorem \ref{thm:separation} and Proposition \ref{prop:tori}.

\section{The variational setting}

\label{sec:variational}Let $\mathbf{D}:=D^{1,2}(\mathbb{R}^{N})\times
D^{1,2}(\mathbb{R}^{N})$ where, as usual, $D^{1,2}(\mathbb{R}^{N}):=\{u\in
L^{2^{\ast}}(\mathbb{R}^{N}):\nabla u\in L^{2}(\mathbb{R}^{N},\mathbb{R}%
^{N})\}.$ The scalar product in $\mathbf{D}$ is given by%
\[
\left\langle (u_{1},v_{1}),(u_{2},v_{2})\right\rangle :=\int_{\mathbb{R}^{N}%
}\nabla u_{1}\cdot\nabla u_{2}+\nabla v_{1}\cdot\nabla v_{2}.
\]
The solutions to the system (\ref{system}) are the critical points of the
functional $E:\mathbf{D}\rightarrow\mathbb{R}$ defined by
\[
E\left(  u,v\right)  :=\frac{1}{2}\int_{\mathbb{R}^{N}}\left(  \left\vert
\nabla u\right\vert ^{2}+\left\vert \nabla v\right\vert ^{2}\right)  -\frac
{1}{2^{\ast}}\int_{\mathbb{R}^{N}}\left(  \mu_{1}|u|^{{2}^{\ast}}+\mu
_{2}|v|^{{2}^{\ast}}\right)  -\lambda\int_{\mathbb{R}^{N}}|u|^{{\alpha}%
}|v|^{{\beta}}.
\]
Note that, as $\alpha,\beta>1,$ this functional is of class $\mathcal{C}^{1}.$
We write%
\begin{align*}
f(u,v)  &  :=\partial_{u}E\left(  u,v\right)  u=\int_{\mathbb{R}^{N}%
}\left\vert \nabla u\right\vert ^{2}-\mu_{1}\int_{\mathbb{R}^{N}}%
|u|^{{2}^{\ast}}-\lambda\alpha\int_{\mathbb{R}^{N}}|u|^{{\alpha}}|v|^{{\beta}%
},\\
h(u,v)  &  :=\partial_{v}E\left(  u,v\right)  v=\int_{\mathbb{R}^{N}%
}\left\vert \nabla v\right\vert ^{2}-\mu_{2}\int_{\mathbb{R}^{N}}%
|v|^{{2}^{\ast}}-\lambda\beta\int_{\mathbb{R}^{N}}|u|^{{\alpha}}|v|^{{\beta}}.
\end{align*}
The fully nontrivial solutions to (\ref{system})\ lie on the set%
\[
\mathcal{N}:=\{(u,v)\in\mathbf{D}:u\neq0,\text{ }v\neq0,\text{ }%
f(u,v)=0,\text{ }h(u,v)=0\},
\]
which is called the \emph{Nehari manifold} and has the following properties.

\begin{proposition}
\label{prop:nehari}

\begin{enumerate}
\item[(a)] For every $(u,v)\in\mathcal{N}$, one has that%
\[
\mu_{1}^{-(N-2)/2}S^{N/2}\leq\int_{\mathbb{R}^{N}}\left\vert \nabla
u\right\vert ^{2}\text{,\qquad}\mu_{2}^{-(N-2)/2}S^{N/2}\leq\int
_{\mathbb{R}^{N}}\left\vert \nabla v\right\vert ^{2},
\]
where $S$ is the best constant for the embedding $D^{1,2}%
(\mathbb{R}^{N})\hookrightarrow L^{2^{\ast}}(\mathbb{R}^{N}).$

\item[(b)] $\mathcal{N}$ is a closed $\mathcal{C}^{1}$-submanifold of
codimension $2$ of the Hilbert space $\mathbf{D},$ and the tangent space to
$\mathcal{N}$\ at the point $(u,v)$ is the orthogonal complement in
$\mathbf{D}$ of the linear subspace generated by $\nabla f(u,v)$ and $\nabla
h(u,v).$

\item[(c)] $\mathcal{N}$ is a natural constraint for the functional $E,$ i.e.,
a critical point of the restriction of $E$ to $\mathcal{N}$ is a critical
point of $E.$

\item[(d)] If $(u,v)\in\mathcal{N}$, then $E(u,v)=\max\left\{
E(su,tv):s>0,\text{\thinspace}t>0\right\}  .$
\end{enumerate}
\end{proposition}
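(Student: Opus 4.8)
The plan is to verify the four assertions in sequence, each of which is fairly standard for Nehari-type manifolds once the structure of the system is unravelled. For part (a), I would start from the two equations $f(u,v)=0$ and $h(u,v)=0$ that define $\mathcal{N}$. Since $\lambda<0$ and $\alpha,\beta>1$, the coupling term $-\lambda\alpha\int_{\mathbb{R}^{N}}|u|^{\alpha}|v|^{\beta}$ is $\geq 0$, so $f(u,v)=0$ forces $\int_{\mathbb{R}^{N}}|\nabla u|^{2}\geq\mu_{1}\int_{\mathbb{R}^{N}}|u|^{2^{\ast}}$. Combining this with the Sobolev inequality $S\bigl(\int|u|^{2^{\ast}}\bigr)^{2/2^{\ast}}\leq\int|\nabla u|^{2}$ and the fact that $u\neq 0$, one solves the resulting inequality for $\int|\nabla u|^{2}$ and obtains exactly the stated bound $\mu_{1}^{-(N-2)/2}S^{N/2}\leq\int|\nabla u|^{2}$; the same argument with the roles of $u$ and $v$ swapped gives the bound for $v$.

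For parts (b) and (c) I would use the standard implicit-function-theorem / Lagrange-multiplier scheme. First, $\mathcal{N}$ is closed because $f$ and $h$ are continuous and, by part (a), the constraints $u\neq 0$, $v\neq 0$ are automatically bounded away from $0$ on $\mathcal{N}$ in norm, so the "puncture" is harmless. To see $\mathcal{N}$ is a $\mathcal{C}^{1}$-submanifold of codimension $2$, it suffices to check that the differential $(u,v)\mapsto(f(u,v),h(u,v))$ is a submersion at each point of $\mathcal{N}$, i.e., that $\nabla f(u,v)$ and $\nabla h(u,v)$ are linearly independent. This reduces to showing the $2\times 2$ matrix with entries $\langle\nabla f,(u,0)\rangle$, $\langle\nabla f,(0,v)\rangle$, $\langle\nabla h,(u,0)\rangle$, $\langle\nabla h,(0,v)\rangle$ — equivalently $\partial_{s}f(su,tv)$, $\partial_{t}f(su,tv)$, etc., evaluated at $s=t=1$ — is invertible. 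A direct computation of these derivatives, using $2^{\ast}>2$ and $\lambda<0$, shows the diagonal entries are strictly negative and the matrix is (strictly) diagonally dominant, hence invertible; this is the computational heart of (b). Part (c) is then the usual Nehari argument: if $(u,v)$ is a critical point of $E|_{\mathcal{N}}$ there exist Lagrange multipliers $\rho,\sigma$ with $\nabla E(u,v)=\rho\nabla f(u,v)+\sigma\nabla h(u,v)$; testing this identity against $(u,0)$ and $(0,v)$ gives a homogeneous linear system for $(\rho,\sigma)$ whose matrix is precisely the invertible matrix just discussed, forcing $\rho=\sigma=0$, so $\nabla E(u,v)=0$.

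For part (d), fix $(u,v)\in\mathcal{N}$ and consider $\varphi(s,t):=E(su,tv)$ for $s,t>0$. Writing out $\varphi$ explicitly, it has the form $\varphi(s,t)=\tfrac12(as^{2}+bt^{2})-\tfrac{1}{2^{\ast}}(cs^{2^{\ast}}+dt^{2^{\ast}})-\lambda e\,s^{\alpha}t^{\beta}$ with $a,b,c,d,e>0$ and $-\lambda>0$; one checks $\partial_{s}\varphi(s,t)=\tfrac1s f(su,tv)$ and $\partial_{t}\varphi(s,t)=\tfrac1t h(su,tv)$, so $(1,1)$ is a critical point of $\varphi$ because $(u,v)\in\mathcal{N}$. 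It remains to argue this critical point is the global maximum on $(0,\infty)^{2}$: as $\alpha+\beta=2^{\ast}$ and $\alpha,\beta\le 2$, the quadratic terms dominate near the axes (so $\varphi>0$ somewhere and $\varphi\to$ small/negative near the boundary) while the $s^{2^{\ast}}$ and $t^{2^{\ast}}$ terms dominate at infinity (so $\varphi\to-\infty$), hence $\varphi$ attains a maximum in the interior; uniqueness of the interior critical point — which again follows from the strict sign/concavity properties of the Hessian coming from $2^{\ast}>2$ and $-\lambda>0$ — identifies it with $(1,1)$. I expect the main obstacle to be organizing the sign computations in the $2\times 2$ Hessian/gradient matrix cleanly enough that invertibility (for (b), (c)) and the max characterization (for (d)) both drop out; none of the individual estimates is hard, but it is easy to make sign errors with the $-\lambda>0$ bookkeeping, so I would set up the matrix once and reuse it throughout.
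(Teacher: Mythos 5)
Your plan follows the structure of the paper closely, with one genuine improvement and two points that need fixing.

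In part (a) you have reversed the key inequality. Since the coupling term $-\lambda\alpha\int_{\mathbb{R}^{N}}|u|^{\alpha}|v|^{\beta}$ in $f$ is nonnegative when $\lambda<0$, the constraint $f(u,v)=0$ gives $\int_{\mathbb{R}^{N}}|\nabla u|^{2}=\mu_{1}\int_{\mathbb{R}^{N}}|u|^{2^{\ast}}+\lambda\alpha\int_{\mathbb{R}^{N}}|u|^{\alpha}|v|^{\beta}\le\mu_{1}\int_{\mathbb{R}^{N}}|u|^{2^{\ast}}$, not $\ge$. The direction matters: it is precisely this $\le$-inequality that, chained with Sobolev $S\bigl(\int_{\mathbb{R}^{N}}|u|^{2^{\ast}}\bigr)^{2/2^{\ast}}\le\int_{\mathbb{R}^{N}}|\nabla u|^{2}$, yields $S\bigl(\int_{\mathbb{R}^{N}}|u|^{2^{\ast}}\bigr)^{2/2^{\ast}}\le\mu_{1}\int_{\mathbb{R}^{N}}|u|^{2^{\ast}}$ and hence a positive lower bound on $\int_{\mathbb{R}^{N}}|u|^{2^{\ast}}$, from which the claimed bound on $\int_{\mathbb{R}^{N}}|\nabla u|^{2}$ follows. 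The reversed inequality points the wrong way and would leave you with no lower bound at all.

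Your treatment of (b) and (c) is correct, and the diagonal-dominance observation in (b) is a genuinely cleaner route than the one taken in the paper. Writing $A:=\int_{\mathbb{R}^{N}}|u|^{2^{\ast}}$ and $B:=\int_{\mathbb{R}^{N}}|u|^{\alpha}|v|^{\beta}$, and using $\alpha+\beta=2^{\ast}$ and $f(u,v)=0$, one finds $-a_{11}-a_{12}=(2^{\ast}-2)\bigl(\mu_{1}A+\lambda\alpha B\bigr)=(2^{\ast}-2)\int_{\mathbb{R}^{N}}|\nabla u|^{2}>0$ and similarly $-a_{22}-a_{21}=(2^{\ast}-2)\int_{\mathbb{R}^{N}}|\nabla v|^{2}>0$; combined with $a_{12}=a_{21}\ge 0$, this gives $|a_{11}|>|a_{12}|$, $|a_{22}|>|a_{21}|$ and hence $\det(a_{ij})>0$. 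The paper instead runs a more laborious direct estimate of the determinant; your route is simpler and is worth writing out in full. Part (c) is then the standard Lagrange-multiplier argument, as in the paper.

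Part (d) is where the plan glosses over the real difficulty. You correctly identify that $\varphi(s,t)=E(su,tv)$ has $(1,1)$ as a critical point and that the matrix analysis shows the Hessian of $\varphi$ is negative definite \emph{at every interior critical point}. But you then assert that uniqueness of the interior critical point ``follows from the strict sign/concavity properties of the Hessian'', and that step is not valid as stated: $\varphi$ is not concave on $(0,\infty)^{2}$, and the Hessian is controlled only at critical points of $\varphi$ (where the Nehari relations $f(su,tv)=h(su,tv)=0$ make the matrix argument apply), not everywhere. Knowing that every interior critical point is a strict local maximum does not by itself exclude two separate local maxima. Passing from ``all critical points are strict local maxima, and $\varphi$ increases inward near the boundary and decays at infinity'' to ``there is exactly one critical point'' requires a topological argument: either the gradient-flow/entrance-time argument of the paper's Lemma \ref{lem:unique}, or a mountain-pass argument (two strict local maxima on a square with boundary pushed inward would force an intermediate critical point that is not a local maximum). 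Your plan needs to supply something of that kind rather than appealing to concavity.
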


\begin{proof}
(a) \ Let $(u,v)\in\mathcal{N}$. Then, as $f(u,v)=0$, $h(u,v)=0$ and
$\lambda<0,$ we have that%
\[
\int_{\mathbb{R}^{N}}\left\vert \nabla u\right\vert ^{2}\leq\mu_{1}%
\int_{\mathbb{R}^{N}}|u|^{{2}^{\ast}}\text{\qquad and\qquad}\int
_{\mathbb{R}^{N}}\left\vert \nabla v\right\vert ^{2}\leq\mu_{2}\int
_{\mathbb{R}^{N}}\left\vert v\right\vert ^{{2}^{\ast}}.
\]
Since $u\neq0$ and $v\neq0,$ using the Sobolev inequality we get that%
\[
0<S\leq\frac{\int_{\mathbb{R}^{N}}\left\vert \nabla u\right\vert ^{2}}{\left(
\int_{\mathbb{R}^{N}}|u|^{{2}^{\ast}}\right)  ^{2/{2}^{\ast}}}\leq\mu
_{1}^{2/2^{\ast}}\left(  \int_{\mathbb{R}^{N}}\left\vert \nabla u\right\vert
^{2}\right)  ^{({2}^{\ast}-2)/{2}^{\ast}}%
\]
and
\[
0<S\leq\frac{\int_{\mathbb{R}^{N}}\left\vert \nabla v\right\vert ^{2}}{\left(
\int_{\mathbb{R}^{N}}|v|^{{2}^{\ast}}\right)  ^{2/{2}^{\ast}}}\leq\mu
_{2}^{2/2^{\ast}}\left(  \int_{\mathbb{R}^{N}}\left\vert \nabla v\right\vert
^{2}\right)  ^{({2}^{\ast}-2)/{2}^{\ast}}.
\]
This proves (a).

(b) \ Statement (a) implies that $\mathcal{N}$ is closed in $\mathbf{D}$. Next
we show that $\nabla f(u,v)$ and $\nabla h(u,v)$ are linearly independent for
every $(u,v)\in\mathcal{N}$. If $s\nabla f(u,v)+t\nabla h(u,v)=0$ for some
$(u,v)\in\mathcal{N},$ $s,t\in\mathbb{R}$, then%
\begin{align*}
&  0=s\left\langle \nabla f(u,v),(u,0)\right\rangle +t\left\langle \nabla
h(u,v),(u,0)\right\rangle \\
&  =s\left(  2\int_{\mathbb{R}^{N}}\left\vert \nabla u\right\vert ^{2}%
-2^{\ast}\mu_{1}\int_{\mathbb{R}^{N}}|u|^{{2}^{\ast}}-\lambda\alpha^{2}%
\int_{\mathbb{R}^{N}}|u|^{{\alpha}}|v|^{{\beta}}\right)  +t\left(
-\lambda\alpha\beta\int_{\mathbb{R}^{N}}|u|^{{\alpha}}|v|^{{\beta}}\right) \\
&  =s\left(  \left(  2-2^{\ast}\right)  \mu_{1}\int_{\mathbb{R}^{N}}%
|u|^{{2}^{\ast}}+\lambda\alpha(2-\alpha)\int_{\mathbb{R}^{N}}|u|^{{\alpha}%
}|v|^{{\beta}}\right)  +t\left(  -\lambda\alpha\beta\int_{\mathbb{R}^{N}%
}|u|^{{\alpha}}|v|^{{\beta}}\right) \\
&  =:sa_{11}+ta_{12},
\end{align*}
and%
\begin{align*}
&  0=s\left\langle \nabla f(u,v),(0,v)\right\rangle +t\left\langle \nabla
h(u,v),(0,v)\right\rangle \\
&  =s\left(  -\lambda\alpha\beta\int_{\mathbb{R}^{N}}|u|^{{\alpha}}%
|v|^{{\beta}}\right)  +t\left(  2\int_{\mathbb{R}^{N}}\left\vert \nabla
v\right\vert ^{2}-2^{\ast}\mu_{2}\int_{\mathbb{R}^{N}}|v|^{{2}^{\ast}}%
-\lambda\beta^{2}\int_{\mathbb{R}^{N}}|u|^{{\alpha}}|v|^{{\beta}}\right) \\
&  =s\left(  -\lambda\alpha\beta\int_{\mathbb{R}^{N}}|u|^{{\alpha}}%
|v|^{{\beta}}\right)  +t\left(  \left(  2-2^{\ast}\right)  \mu_{2}%
\int_{\mathbb{R}^{N}}|v|^{{2}^{\ast}}+\lambda\beta(2-\beta)\int_{\mathbb{R}%
^{N}}|u|^{{\alpha}}|v|^{{\beta}}\right) \\
&  =:sa_{21}+ta_{22}.
\end{align*}
If $\int_{\mathbb{R}^{N}}|u|^{{\alpha}}|v|^{{\beta}}=0$, statement (a) implies
that
\[
\det(a_{ij})\geq\left(  2-2^{\ast}\right)  ^{2}c_{0}^{2}>0,
\]
where $c_{0}:=\min\{\mu_{1}^{-(N-2)/2},\mu_{2}^{-(N-2)/2}\}S^{N/2}.$ If
$\int_{\mathbb{R}^{N}}|u|^{{\alpha}}|v|^{{\beta}}\neq0$ then, as $\alpha
,\beta\in(1,2]$ and $\lambda<0$, we have that%
\begin{align*}
A  &  :=\frac{\mu_{1}\int_{\mathbb{R}^{N}}|u|^{{2}^{\ast}}}{-\lambda
\int_{\mathbb{R}^{N}}|u|^{{\alpha}}|v|^{{\beta}}}\geq\frac{c_{0}}{-\lambda
\int_{\mathbb{R}^{N}}|u|^{{\alpha}}|v|^{{\beta}}}+\alpha>\left(  \frac{c_{0}%
}{-2\lambda\int_{\mathbb{R}^{N}}|u|^{{\alpha}}|v|^{{\beta}}}+1\right)
\alpha=:C\alpha,\\
B  &  :=\frac{\mu_{2}\int_{\mathbb{R}^{N}}|v|^{{2}^{\ast}}}{-\lambda
\int_{\mathbb{R}^{N}}|u|^{{\alpha}}|v|^{{\beta}}}\geq\frac{c_{0}}{-\lambda
\int_{\mathbb{R}^{N}}|u|^{{\alpha}}|v|^{{\beta}}}+\beta>\left(  \frac{c_{0}%
}{-2\lambda\int_{\mathbb{R}^{N}}|u|^{{\alpha}}|v|^{{\beta}}}+1\right)
\beta=:C\beta.
\end{align*}
We use these inequalities, and the fact that $\alpha,\beta\in(1,2]$ and
$\alpha+\beta=2^{\ast},$ to estimate the determinant%
\begin{align*}
&  \left\vert
\begin{array}
[c]{cc}%
\left(  2-2^{\ast}\right)  \frac{\mu_{1}\int_{\mathbb{R}^{N}}|u|^{{2}^{\ast}}%
}{-\lambda\int_{\mathbb{R}^{N}}|u|^{{\alpha}}|v|^{{\beta}}}-\alpha(2-\alpha) &
\alpha\beta\\
\alpha\beta & \left(  2-2^{\ast}\right)  \frac{\mu_{2}\int_{\mathbb{R}^{N}%
}|v|^{{2}^{\ast}}}{-\lambda\int_{\mathbb{R}^{N}}|u|^{{\alpha}}|v|^{{\beta}}%
}-\beta(2-\beta)
\end{array}
\right\vert \\
&  =\left(  2-2^{\ast}\right)  ^{2}AB-\left(  2-2^{\ast}\right)  \left(
\beta(2-\beta)A+\alpha(2-\alpha)B\right)  +\alpha\beta(2-\alpha)(2-\beta
)-(\alpha\beta)^{2}\\
&  \geq C\alpha\beta\left[  \left(  2-2^{\ast}\right)  ^{2}-(2-2^{\ast
})(4-2^{\ast})\right]  +\alpha\beta\left[  (2-\alpha)(2-\beta)-\alpha
\beta\right] \\
&  =\frac{\left(  2^{\ast}-2\right)  c_{0}\alpha\beta}{-\lambda\int
_{\mathbb{R}^{N}}|u|^{{\alpha}}|v|^{{\beta}}}.
\end{align*}
It follows that%
\begin{equation}
\det(a_{ij})\geq\left(  2^{\ast}-2\right)  c_{0}\alpha\beta\left(
-\lambda\right)  \int_{\mathbb{R}^{N}}|u|^{{\alpha}}|v|^{{\beta}}>0.
\label{eq:det}%
\end{equation}
Thus, in both cases, $s=t=0.$ This proves that $\nabla f(u,v)$ and $\nabla
h(u,v)$ are linearly independent for every $(u,v)\in\mathcal{N}$. Therefore,
$\mathcal{N}$ is a $\mathcal{C}^{1}$-submanifold of $\mathbf{D}$ and the
tangent space to $\mathcal{N}$\ at the point $(u,v)$ is the orthogonal
complement in $\mathbf{D}$ of the linear subspace generated by $\nabla f(u,v)$
and $\nabla h(u,v).$

(c) \ If $(u,v)\in\mathcal{N}$ is a critical point of the restriction of $E$
to $\mathcal{N}$, then $\nabla E(u,v)=s\nabla f(u,v)+t\nabla h(u,v)$ for some
$s,t\in\mathbb{R}$. Taking the scalar product with $(u,0)$ and $(0,v)$ we get
that%
\begin{align*}
s\left\langle \nabla f(u,v),(u,0)\right\rangle +t\left\langle \nabla
h(u,v),(u,0)\right\rangle  &  =\left\langle \nabla E(u,v),(u,0)\right\rangle
=f(u,v)=0,\\
s\left\langle \nabla f(u,v),(0,v)\right\rangle +t\left\langle \nabla
h(u,v),(0,v)\right\rangle  &  =\left\langle \nabla E(u,v),(0,v)\right\rangle
=h(u,v)=0.
\end{align*}
But we have already shown that this implies that $s=t=0.$ Hence, $\nabla
E(u,v)=0,$ i.e., $(u,v)$ is a critical point of $E.$

(d) \ Fix $(u,v)\in\mathcal{N}$ and let $(\hat{s},\hat{t})$ be a critical
point of the function $e(s,t):=E(su,tv)$ in $(0,\infty)\times(0,\infty).$
Then, as $s\frac{\partial e}{\partial s}(s,t)=f(su,tv)$ and $t\frac{\partial
e}{\partial t}(s,t)=h(su,tv),$ we have that $(\hat{s}u,\hat{t}v)\in
\mathcal{N}$. Moreover,%
\begin{align*}
\hat{s}^{2}\frac{\partial^{2}e}{\partial s^{2}}(\hat{s},\hat{t})  &  =\left(
2-2^{\ast}\right)  \mu_{1}\int_{\mathbb{R}^{N}}|\hat{s}u|^{{2}^{\ast}}%
+\lambda\alpha(2-\alpha)\int_{\mathbb{R}^{N}}|\hat{s}u|^{{\alpha}}|\hat
{t}v|^{{\beta}},\\
\hat{t}^{2}\frac{\partial^{2}e}{\partial t^{2}}(\hat{s},\hat{t})  &  =\left(
2-2^{\ast}\right)  \mu_{2}\int_{\mathbb{R}^{N}}|\hat{t}v|^{{2}^{\ast}}%
+\lambda\beta(2-\beta)\int_{\mathbb{R}^{N}}|\hat{s}u|^{{\alpha}}|\hat
{t}v|^{{\beta}},\\
\hat{s}\hat{t}\frac{\partial^{2}e}{\partial t\partial s}(\hat{s},\hat{t})  &
=-\lambda\alpha\beta\int_{\mathbb{R}^{N}}|\hat{s}u|^{{\alpha}}|\hat
{t}v|^{{\beta}}.
\end{align*}
Hence, $\frac{\partial^{2}e}{\partial s^{2}}(\hat{s},\hat{t})<0,$
$\frac{\partial^{2}e}{\partial t^{2}}(\hat{s},\hat{t})<0$ and, as shown in
part (b),
\[
\left(  \hat{s}\hat{t}\right)  ^{2}\left(  \frac{\partial^{2}e}{\partial
s^{2}}(\hat{s},\hat{t})\frac{\partial^{2}e}{\partial t^{2}}(\hat{s},\hat
{t})-\left(  \frac{\partial^{2}e}{\partial t\partial s}(\hat{s},\hat
{t})\right)  ^{2}\right)  >0.
\]
Therefore, $(\hat{s},\hat{t})$ is a strict local maximum of $e.$ This implies
that $(1,1)$ is the only critical point of $e$ in $(0,\infty)\times(0,\infty)$
and it is a global maximum; see Lemma \ref{lem:unique} in the appendix.
\end{proof}

The following statement was proved in \cite{cz1,cz2}\ for $\alpha=\beta
=\frac{2^{\ast}}{2}.$ We give a simpler proof which applies to all
$\alpha,\beta.$

\begin{proposition}
\label{prop:nonexistence}$\inf_{(u,v)\in\mathcal{N}}E(u,v)=\frac{1}{N}(\mu
_{1}^{-(N-2)/2}+\mu_{2}^{-(N-2)/2})S^{N/2}$ and this value is not attained by
$E$ on $\mathcal{N}$.
\end{proposition}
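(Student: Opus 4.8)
The plan is to prove the formula for the infimum and the non-attainment separately, using throughout the elementary observation that $E$ takes a very simple form on $\mathcal{N}$. Indeed, if $(u,v)\in\mathcal{N}$, then adding the constraints $f(u,v)=0$ and $h(u,v)=0$ gives
$\int_{\R^N}(|\nabla u|^2+|\nabla v|^2)=\mu_1\int_{\R^N}|u|^{2^*}+\mu_2\int_{\R^N}|v|^{2^*}+2^*\lambda\int_{\R^N}|u|^\alpha|v|^\beta$, and substituting this into the definition of $E$ one obtains $E(u,v)=\frac1N\int_{\R^N}(|\nabla u|^2+|\nabla v|^2)$. Combining this identity with the a priori bounds of Proposition \ref{prop:nehari}(a) immediately yields $E(u,v)\ge\frac1N(\mu_1^{-(N-2)/2}+\mu_2^{-(N-2)/2})S^{N/2}$ for every $(u,v)\in\mathcal{N}$, which is the lower bound for the infimum.

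For the matching upper bound I would build a minimizing sequence out of two asymptotically decoupled Aubin--Talenti instantons. Let $U$ be the standard positive bubble and set $U_1:=\mu_1^{-(N-2)/4}U$, $U_2:=\mu_2^{-(N-2)/4}U$, so that $U_i$ solves $-\Delta U_i=\mu_i U_i^{2^*-1}$ with $\int_{\R^N}|\nabla U_i|^2=\mu_i^{-(N-2)/2}S^{N/2}$. Taking $u_n:=U_1$ and $v_n(x):=n^{(N-2)/2}U_2(nx)$ (one could equally well keep the scale fixed and translate the second bubble off to infinity), the decisive point is that the coupling integral $c_n:=\int_{\R^N}|u_n|^\alpha|v_n|^\beta$ tends to $0$. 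Granting this, consider $e_n(s,t):=E(su_n,tv_n)=g_1(s)+g_2(t)-\lambda s^\alpha t^\beta c_n$, where $g_i(s):=\bigl(\frac{s^2}{2}-\frac{s^{2^*}}{2^*}\bigr)\mu_i^{-(N-2)/2}S^{N/2}$ is maximized at $s=1$ with $g_i(1)=\frac1N\mu_i^{-(N-2)/2}S^{N/2}$. Since $-\lambda>0$ and $c_n\to0$, Young's inequality $s^\alpha t^\beta\le\frac{\alpha}{2^*}s^{2^*}+\frac{\beta}{2^*}t^{2^*}$ bounds $e_n$ from above by a separated function whose maximum tends to $g_1(1)+g_2(1)$; moreover $e_n\to-\infty$ at infinity in $(0,\infty)\times(0,\infty)$ and near the boundary is positive only on its quadratic part, so $e_n$ attains its supremum over the open quadrant at an interior critical point, which by the computation in the proof of Proposition \ref{prop:nehari}(d) (or by Lemma \ref{lem:unique}) lies on $\mathcal{N}$. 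Hence $\inf_{\mathcal{N}}E\le\max_{s,t>0}e_n(s,t)\to\frac1N(\mu_1^{-(N-2)/2}+\mu_2^{-(N-2)/2})S^{N/2}$, and the formula follows (in particular $\mathcal{N}\neq\emptyset$).

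For non-attainment, suppose $(u,v)\in\mathcal{N}$ realizes the infimum. By the identity $E(u,v)=\frac1N\int_{\R^N}(|\nabla u|^2+|\nabla v|^2)$ together with Proposition \ref{prop:nehari}(a), equality in the infimum forces $\int_{\R^N}|\nabla u|^2=\mu_1^{-(N-2)/2}S^{N/2}$ and $\int_{\R^N}|\nabla v|^2=\mu_2^{-(N-2)/2}S^{N/2}$. Tracing back the chain of inequalities in the proof of Proposition \ref{prop:nehari}(a), these two equalities are possible only if, first, $\lambda\alpha\int_{\R^N}|u|^\alpha|v|^\beta=0$, hence $\int_{\R^N}|u|^\alpha|v|^\beta=0$ since $\lambda<0$, and, second, $u$ and $v$ both attain the best Sobolev constant $S$. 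By the Aubin--Talenti classification of Sobolev extremals, $u$ and $v$ are then nonzero scalar multiples of translated, rescaled standard bubbles; in particular $|u(x)|>0$ and $|v(x)|>0$ for every $x\in\R^N$, so $\int_{\R^N}|u|^\alpha|v|^\beta>0$, a contradiction. Therefore the value $\frac1N(\mu_1^{-(N-2)/2}+\mu_2^{-(N-2)/2})S^{N/2}$ is not attained on $\mathcal{N}$.

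The only genuinely technical step is the claim $c_n\to0$: it follows from the decay $U(x)=O(|x|^{-(N-2)})$ by splitting the integral according to the relative size of the two bubble profiles, but one must treat with some care the borderline case $N=4$, $\alpha=\beta=2$, where the naive estimate produces a logarithmic factor that is nonetheless dominated by the gain from the scaling parameter. Everything else --- the Nehari reduction of $E$, the lower bound, the fibering argument, and the rigidity in the non-attainment part --- is a direct consequence of Proposition \ref{prop:nehari} and the classical characterization of Sobolev extremals.
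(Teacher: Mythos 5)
Your proof is correct, and your lower-bound computation is identical to the paper's. Where you diverge is in the construction of the minimizing sequence and in the non-attainment argument, and in both places your route is genuinely different. For the upper bound, the paper sidesteps all decay estimates: it picks $w_k\in\mathcal C^\infty_c(B_1(0))$ with $\int|\nabla w_k|^2=\int|w_k|^{2^*}\to S^{N/2}$, translates the two rescaled copies so that their supports are \emph{literally disjoint}, and concludes $\int|u_k|^\alpha|v_k|^\beta=0$ exactly, so $(u_k,v_k)\in\mathcal N$ with no fibering argument needed. Your version with two Aubin--Talenti bubbles at different scales works, and you correctly identify the borderline case $N=4$, $\alpha=\beta=2$; a careful splitting does give $c_n\to0$ (the exponent $\beta(N-2)/2-N<0$ always wins, up to a logarithm), but the Young-inequality fibering step and the extra asymptotic analysis are all avoidable. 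For non-attainment, the paper takes $|u_0|,|v_0|$, splits into the two cases $\int u_0^\alpha v_0^\beta=0$ and $>0$, and in the first case invokes the natural-constraint property (Proposition \ref{prop:nehari}(c)) to conclude $u_0$ solves $-\Delta u=\mu_1|u|^{2^*-2}u$ with $u_0$ vanishing on a set of positive measure, contradicting the strong maximum principle. You instead trace equality in the chain of Proposition \ref{prop:nehari}(a) to force both $u$ and $v$ to be Sobolev extremals, and invoke the Talenti--Aubin classification to see they are nowhere vanishing, contradicting $\int|u|^\alpha|v|^\beta=0$. Your argument avoids the natural-constraint lemma and the WLOG reduction to nonnegative minimizers, at the cost of invoking the full classification of extremals; the paper's version uses lighter PDE tools but more of the surrounding machinery. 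One small presentational point: in your equality analysis, the Sobolev equality for $u$ actually comes first in the chain, and the vanishing of $\lambda\alpha\int|u|^\alpha|v|^\beta$ then drops out of $f(u,v)=0$ once $\int|\nabla u|^2=\mu_1\int|u|^{2^*}$ is pinned down — you state them in reverse order, which is harmless but worth straightening out.
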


\begin{proof}
If\ $(u,v)\in\mathcal{N}$,\ then Proposition \ref{prop:nehari}(a) yields%
\begin{align*}
E(u,v)  &  =E(u,v)-\frac{1}{2^{\ast}}E^{\prime}(u,v)\left[  (u,v)\right] \\
&  =\frac{1}{N}\int_{\mathbb{R}^{N}}\left(  \left\vert \nabla u\right\vert
^{2}+\left\vert \nabla v\right\vert ^{2}\right)  \geq\frac{1}{N}(\mu
_{1}^{-(N-2)/2}+\mu_{2}^{-(N-2)/2})S^{N/2}.
\end{align*}
To prove the opposite inequality, we choose a sequence of functions $w_{k}%
\in\mathcal{C}_{c}^{\infty}(B_{1}(0))$ in the unit ball $B_{1}(0):=\{x\in
\mathbb{R}^{N}:\left\vert x\right\vert <1\}$ which satisfies%
\[
\int_{B_{1}(0)}\left\vert \nabla w_{k}\right\vert ^{2}=\int_{B_{1}%
(0)}\left\vert w_{k}\right\vert ^{2^{\ast}}\text{\qquad and\qquad}\int
_{B_{1}(0)}\left\vert \nabla w_{k}\right\vert ^{2}\rightarrow S^{N/2}.
\]
Such a sequence exists because
\[
S=\inf_{\substack{w\in D_{0}^{1,2}\left(  \Omega\right)  \\w\neq0}}\frac
{\int_{\Omega}\left\vert \nabla w\right\vert ^{2}}{\left(  \int_{\Omega
}|w|^{{2}^{\ast}}\right)  ^{2/{2}^{\ast}}}%
\]
for every domain $\Omega$ in $\mathbb{R}^{N}$; see, e.g., \cite{s}. Fix
$\xi\in\mathbb{R}^{N}$ with $\left\vert \xi\right\vert =1$ and define
$u_{k}(x):=\mu_{1}^{-(N-2)/4}w_{k}(x-\xi)$ and $v_{k}(x):=\mu_{2}%
^{-(N-2)/4}w_{k}(x+\xi).$ As $u_{k}$ and $v_{k}$ have disjoint supports, we
have that $\int_{\mathbb{R}^{N}}|u_{k}|^{{\alpha}}|v_{k}|^{{\beta}}=0.$
Hence,
\begin{align*}
f(u_{k},v_{k})  &  =\mu_{1}^{(2-N)/2}\left(  \int_{B_{1}(0)}\left\vert \nabla
w_{k}\right\vert ^{2}-\int_{B_{1}(0)}\left\vert w_{k}\right\vert ^{2^{\ast}%
}\right)  =0,\\
h(u_{k},v_{k})  &  =\mu_{2}^{(2-N)/2}\left(  \int_{B_{1}(0)}\left\vert \nabla
w_{k}\right\vert ^{2}-\int_{B_{1}(0)}\left\vert w_{k}\right\vert ^{2^{\ast}%
}\right)  =0,
\end{align*}
i.e., $(u_{k},v_{k})\in\mathcal{N}$, and
\begin{align*}
E\left(  u_{k},v_{k}\right)   &  =\frac{1}{N}\left(  \mu_{1}^{-(N-2)/2}%
\int_{B_{1}(0)}\left\vert \nabla w_{k}\right\vert ^{2}+\mu_{2}^{-(N-2)/2}%
\int_{B_{1}(0)}\left\vert \nabla w_{k}\right\vert ^{2}\right) \\
&  \longrightarrow\frac{1}{N}(\mu_{1}^{-(N-2)/2}+\mu_{2}^{-(N-2)/2})S^{N/2}.
\end{align*}
This proves that $\inf_{(u,v)\in\mathcal{N}}E(u,v)=\frac{1}{N}(\mu
_{1}^{-(N-2)/2}+\mu_{2}^{-(N-2)/2})S^{N/2}.$

To show that this value is not attained, we argue by contradiction. Assume
that $(u_{0},v_{0})\in\mathcal{N}$ is a minimum of $E$ on $\mathcal{N}$. As
$(\left\vert u_{0}\right\vert ,\left\vert v_{0}\right\vert )\in\mathcal{N}$
and $E(\left\vert u_{0}\right\vert ,\left\vert v_{0}\right\vert )=E(u_{0}%
,v_{0}),$ the pair $(\left\vert u_{0}\right\vert ,\left\vert v_{0}\right\vert
)$ is also a minimum of $E$. So, we may assume that $u_{0}\geq 0$ and $v_{0}\geq 0.$ We consider two cases. If
$\int_{\mathbb{R}^{N}}u_{0}^{\alpha}v_{0}^{\beta} = 0$,  then $u_{0}^{\alpha}v_{0}^{\beta} = 0$ a.e. in $\mathbb{R}^N$ and $u_{0}$ solves the equation $-\Delta u = \mu_1|u|^{2^*-2}u$. As $v_0$ is nontrivial, we have that $u_0 = 0$ in a set of positive measure. This is a contradiction. If, on the other hand, $\int_{\mathbb{R}^{N}}u_{0}^{\alpha}v_{0}^{\beta} > 0$, then
\[
\int_{\mathbb{R}^{N}}\left\vert \nabla u_{0}\right\vert ^{2}<\mu_{1}%
\int_{\mathbb{R}^{N}}|u_{0}|^{{2}^{\ast}}\text{\qquad and\qquad}%
\int_{\mathbb{R}^{N}}\left\vert \nabla v_{0}\right\vert ^{2}<\mu_{2}%
\int_{\mathbb{R}^{N}}\left\vert v_{0}\right\vert ^{{2}^{\ast}},
\]
and from the Sobolev inequality we derive
\[
\mu_{1}^{-(N-2)/2}S^{N/2}<\int_{\mathbb{R}^{N}}\left\vert \nabla
u_{0}\right\vert ^{2}\text{\qquad and\qquad}\mu_{2}^{-(N-2)/2}S^{N/2}%
<\int_{\mathbb{R}^{N}}\left\vert \nabla v_{0}\right\vert ^{2}.
\]
This implies that $E(u_{0},v_{0})>\frac{1}{N}(\mu_{1}^{-(N-2)/2}+\mu_{2}%
^{-(N-2)/2})S^{N/2}$, which is, again, a contradiction.
\end{proof}

\begin{proposition}
\label{prop:nonsyncronized}There exists a $\lambda_{\ast}<0,$ depending on
$\mu_{1},\mu_{2},\alpha,\beta,$ such that%
\[
\mathcal{N}\cap\{(su,tu):s,t\in\mathbb{R},\text{ }u\in D^{1,2}(\mathbb{R}%
^{N})\}=\emptyset\qquad\text{if \ }\lambda<\lambda_{\ast}.
\]

\end{proposition}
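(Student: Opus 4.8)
The plan is to show that the mere existence of a fully nontrivial synchronized pair in $\mathcal{N}$ forces an upper bound on $|\lambda|$ depending only on $\mu_{1},\mu_{2},\alpha,\beta$, so that no such pair can exist once $\lambda$ is sufficiently negative. The definition of $\lambda_{\ast}$ will fall out of the proof.

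First I would reduce the statement to a purely algebraic inequality. Suppose $(su,tu)\in\mathcal{N}$. Then $su\neq 0$ and $tu\neq 0$, so $s\neq 0$, $t\neq 0$ and $u\neq 0$; set $a:=|s|>0$, $b:=|t|>0$, $D:=\int_{\mathbb{R}^{N}}|\nabla u|^{2}>0$ and $P:=\int_{\mathbb{R}^{N}}|u|^{2^{\ast}}>0$ (finite since $u\in D^{1,2}(\mathbb{R}^{N})$). Using $|su|^{2^{\ast}}=a^{2^{\ast}}|u|^{2^{\ast}}$, $|su|^{\alpha}|tu|^{\beta}=a^{\alpha}b^{\beta}|u|^{\alpha+\beta}=a^{\alpha}b^{\beta}|u|^{2^{\ast}}$ and $\int_{\mathbb{R}^{N}}|\nabla(su)|^{2}=a^{2}D$, the equations $f(su,tu)=0$ and $h(su,tu)=0$ become, after dividing by $a^{2}P$ and $b^{2}P$ respectively and writing $\kappa:=D/P>0$,
\[
\kappa=\mu_{1}a^{2^{\ast}-2}+\lambda\alpha\,a^{\alpha-2}b^{\beta},\qquad
\kappa=\mu_{2}b^{2^{\ast}-2}+\lambda\beta\,a^{\alpha}b^{\beta-2}.
\]
From the first identity, since $\lambda<0$ and $\kappa>0$, one has $\mu_{1}a^{2^{\ast}-2}>|\lambda|\alpha\,a^{\alpha-2}b^{\beta}$; dividing by $a^{\alpha-2}$ and using $2^{\ast}-\alpha=\beta$ this reads $\mu_{1}a^{\beta}>|\lambda|\alpha\,b^{\beta}$, i.e.\ $a/b>(|\lambda|\alpha/\mu_{1})^{1/\beta}$. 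Symmetrically, the second identity together with $2^{\ast}-\beta=\alpha$ gives $\mu_{2}b^{\alpha}>|\lambda|\beta\,a^{\alpha}$, i.e.\ $b/a>(|\lambda|\beta/\mu_{2})^{1/\alpha}$.

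The decisive step is then to multiply these two opposing bounds, which eliminates $a$, $b$ and $\kappa$ simultaneously:
\[
1=\frac{a}{b}\cdot\frac{b}{a}>|\lambda|^{\frac{1}{\alpha}+\frac{1}{\beta}}\Bigl(\frac{\alpha}{\mu_{1}}\Bigr)^{1/\beta}\Bigl(\frac{\beta}{\mu_{2}}\Bigr)^{1/\alpha}.
\]
Since $\frac{1}{\alpha}+\frac{1}{\beta}=\frac{\alpha+\beta}{\alpha\beta}=\frac{2^{\ast}}{\alpha\beta}>0$, raising to the power $\alpha\beta/2^{\ast}$ yields $|\lambda|<(\mu_{1}/\alpha)^{\alpha/2^{\ast}}(\mu_{2}/\beta)^{\beta/2^{\ast}}$. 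Hence, setting
\[
\lambda_{\ast}:=-\Bigl(\frac{\mu_{1}}{\alpha}\Bigr)^{\alpha/2^{\ast}}\Bigl(\frac{\mu_{2}}{\beta}\Bigr)^{\beta/2^{\ast}}<0,
\]
which depends only on $\mu_{1},\mu_{2},\alpha,\beta$, we reach a contradiction whenever $\lambda<\lambda_{\ast}$, so $\mathcal{N}\cap\{(su,tu):s,t\in\mathbb{R},\ u\in D^{1,2}(\mathbb{R}^{N})\}=\emptyset$ for such $\lambda$.

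There is no serious obstacle here: the only non-routine move is to read off from the two Nehari identities the two one-sided bounds on the ratio $a/b$ and multiply them, so that everything except $|\lambda|$ cancels. I note in particular that no lower bound on $\kappa=D/P$ is needed, only $\kappa>0$, so the Sobolev estimates of Proposition~\ref{prop:nehari}(a) play no role; the only features of a synchronized Nehari pair that enter are $\kappa>0$ and the strict positivity of the coupling integral $\int_{\mathbb{R}^{N}}|su|^{\alpha}|tu|^{\beta}=a^{\alpha}b^{\beta}P$.
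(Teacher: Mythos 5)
Your proof is correct and follows essentially the same route as the paper's: both extract from $f(su,tu)=0$ and $h(su,tu)=0$ the two opposing one-sided bounds on the ratio $|s|/|t|$ and derive a contradiction by comparing them. The only difference is presentational: the paper normalizes $\kappa=1$ and runs a sequential contradiction argument with $\lambda_k\to-\infty$, whereas you argue directly at a fixed $\lambda$, which avoids the normalization, removes the need for sequences, and yields the explicit threshold $\lambda_{\ast}=-(\mu_1/\alpha)^{\alpha/2^{\ast}}(\mu_2/\beta)^{\beta/2^{\ast}}$ — a modest but genuine sharpening.
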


\begin{proof}
To highlight the role of $\lambda,$ we write $\mathcal{N}_{\lambda},$
$f_{\lambda}$ and $h_{\lambda},$ instead of $\mathcal{N}$, $f$ and $h.$

Arguing by contradiction, assume there exists a sequence $(\lambda_{k})$ with
$\lambda_{k}\rightarrow-\infty$ for which there are $s_{k},t_{k}\in\mathbb{R}$
and $u_{k}\in D^{1,2}(\mathbb{R}^{N})$ such that $(s_{k}u_{k},t_{k}u_{k}%
)\in\mathcal{N}_{\lambda_{k}}.$ Then $s_{k}\neq0,$ $t_{k}\neq0$ and $u_{k}%
\neq0.$ So, after replacing $u_{k}$ with $r_{k}u_{k}$ for some suitable
$r_{k}>0,$ we may assume that $\int_{\mathbb{R}^{N}}\left\vert \nabla
u_{k}\right\vert ^{2}=\int_{\mathbb{R}^{N}}\left\vert u_{k}\right\vert
^{2^{\ast}}.$ We may also assume that $s_{k}>0,$ $t_{k}>0$. Then, dividing the
equations $f_{\lambda_{k}}(s_{k}u_{k},t_{k}u_{k})=0$ and $h_{\lambda_{k}%
}(s_{k}u_{k},t_{k}u_{k})=0$ by $\int_{\mathbb{R}^{N}}\left\vert u_{k}%
\right\vert ^{2^{\ast}},$ we obtain that $(s_{k},t_{k})$ solves the system
\[
\left\{
\begin{array}
[c]{c}%
1=\mu_{1}s_{k}^{2^{\ast}-2}+\lambda_{k}\alpha s_{k}^{\alpha-2}t_{k}^{\beta
},\medskip\\
1=\mu_{2}t_{k}^{2^{\ast}-2}+\lambda_{k}\beta s_{k}^{\alpha}t_{k}^{\beta-2}.
\end{array}
\right.
\]
Recall that $\alpha+\beta=2^{\ast}.$ Dividing the first equation by
$s_{k}^{\alpha-2}t_{k}^{\beta}$ and the second one by $s_{k}^{\alpha}%
t_{k}^{\beta-2}$ we get that%
\[%
\begin{array}
[c]{c}%
\mu_{1}\left(  \frac{s_{k}}{t_{k}}\right)  ^{\beta}=\frac{1}{s_{k}^{\alpha
-2}t_{k}^{\beta}}-\lambda_{k}\alpha\geq-\lambda_{k}\alpha,\medskip\\
\mu_{2}\left(  \frac{t_{k}}{s_{k}}\right)  ^{\alpha}=\frac{1}{s_{k}^{\alpha
}t_{k}^{\beta-2}}-\lambda_{k}\beta\geq-\lambda_{k}\beta.
\end{array}
\]
It follows that both sequences $(\frac{s_{k}}{t_{k}})$ and $(\frac{t_{k}%
}{s_{k}})$ are unbounded. This is a contradiction.
\end{proof}

\section{Symmetries and compactness}

\label{sec:existence}Let $(\mathbb{S}^{N},g)$ be the standard sphere and
$q\in\mathbb{S}^{N}$ be the north pole. The stereographic projection
$\sigma:\mathbb{S}^{N}\smallsetminus\{q\}\rightarrow\mathbb{R}^{N}$ is a
conformal diffeomorphism. The coordinates of the standard metric $g$ in the
chart given by $\sigma^{-1}:\mathbb{R}^{N}\rightarrow\mathbb{S}^{N}%
\smallsetminus\{q\}$ are $g_{ij}=\psi^{2^{\ast}-2}\delta_{ij},$ where
\[
\psi(x):=\left(  \frac{2}{1+\left\vert x\right\vert ^{2}}\right)
^{(N-2)/2},\text{\qquad}x\in\mathbb{R}^{N}.
\]
For $\mathfrak{u}\in\mathcal{C}^{\infty}(\mathbb{S}^{N})$, we set $u:=\psi
(\mathfrak{u}\circ\sigma^{-1})$ and we write $\nabla_{g}\mathfrak{u}$ for its gradient.

\begin{lemma}
\label{lem:invariance}
For every $\mathfrak{u},\mathfrak{v}\in\mathcal{C}^{\infty}(\mathbb{S}^{N})$ we have that
\begin{align*}
\int_{\mathbb{S}^{N}}\left(  |\nabla_{g}\mathfrak{u}|_{g}^{2}+\frac{N(N-2)}%
{4}\mathfrak{u}^{2}\right)  \mathrm{d}V_{g}  &  =\int_{\mathbb{R}^{N}%
}\left\vert \nabla u\right\vert ^{2}\mathrm{d}x,\\
\int_{\mathbb{S}^{N}}\left\vert \mathfrak{u}\right\vert ^{2^{\ast}}%
\mathrm{d}V_{g}  &  =\int_{\mathbb{R}^{N}}\left\vert u\right\vert ^{2^{\ast}%
}\mathrm{d}x,\\
\int_{\mathbb{S}^{N}}\left\vert \mathfrak{u}\right\vert ^{\alpha}\left\vert
\mathfrak{v}\right\vert ^{\beta}\mathrm{d}V_{g}  &  =\int_{\mathbb{R}^{N}%
}\left\vert u\right\vert ^{\alpha}\left\vert v\right\vert ^{\beta}%
\mathrm{d}x.
\end{align*}
\end{lemma}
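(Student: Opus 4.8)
The plan is to reduce all three identities to the conformal change of variables under $\sigma^{-1}$, exploiting that the stereographic projection is conformal with conformal factor $\psi$. Throughout write $w:=\mathfrak{u}\circ\sigma^{-1}$ and $z:=\mathfrak{v}\circ\sigma^{-1}$ on $\mathbb{R}^{N}$, so that $u=\psi w$ and $v=\psi z$. Since $\mathfrak{u},\mathfrak{v}$ are smooth and bounded, $\psi(x)=O(|x|^{2-N})$ and $|\nabla\psi(x)|=O(|x|^{1-N})$ as $|x|\to\infty$, and the Jacobian of $\sigma^{-1}$ decays at the puncture corresponding to $q$, one checks that $u,v$ are smooth on $\mathbb{R}^{N}$ with $u,v=O(|x|^{2-N})$ and $|\nabla u|,|\nabla v|=O(|x|^{1-N})$; as $N\ge4$ this makes every integral below absolutely convergent and legitimizes the integrations by parts (the boundary terms over spheres $\{|x|=R\}$ are $O(R^{-(N-2)})\to0$).

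First I would record the volume element. Since $g_{ij}=\psi^{2^{\ast}-2}\delta_{ij}$ and $2^{\ast}-2=\tfrac{4}{N-2}$, in the chart $\sigma^{-1}$ we have $\sqrt{\det g}=\psi^{N(2^{\ast}-2)/2}=\psi^{2^{\ast}}$, because $\tfrac{N(2^{\ast}-2)}{2}=\tfrac{2N}{N-2}=2^{\ast}$; hence $\mathrm{d}V_{g}=\psi^{2^{\ast}}\,\mathrm{d}x$ (the point $q$ has measure zero). The second and third identities follow immediately: $\int_{\mathbb{S}^{N}}|\mathfrak{u}|^{2^{\ast}}\mathrm{d}V_{g}=\int_{\mathbb{R}^{N}}|w|^{2^{\ast}}\psi^{2^{\ast}}\mathrm{d}x=\int_{\mathbb{R}^{N}}|u|^{2^{\ast}}\mathrm{d}x$, and, using $\alpha+\beta=2^{\ast}$, $\int_{\mathbb{S}^{N}}|\mathfrak{u}|^{\alpha}|\mathfrak{v}|^{\beta}\mathrm{d}V_{g}=\int_{\mathbb{R}^{N}}|w|^{\alpha}|z|^{\beta}\psi^{\alpha+\beta}\mathrm{d}x=\int_{\mathbb{R}^{N}}|\psi w|^{\alpha}|\psi z|^{\beta}\mathrm{d}x=\int_{\mathbb{R}^{N}}|u|^{\alpha}|v|^{\beta}\mathrm{d}x$.

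For the first identity I would compute the two Dirichlet energies separately and compare. On the sphere, in the chart $|\nabla_{g}\mathfrak{u}|_{g}^{2}=g^{ij}\partial_{i}w\,\partial_{j}w=\psi^{-(2^{\ast}-2)}|\nabla w|^{2}$, so multiplying by $\mathrm{d}V_{g}=\psi^{2^{\ast}}\mathrm{d}x$ and using $2^{\ast}-(2^{\ast}-2)=2$ gives $\int_{\mathbb{S}^{N}}|\nabla_{g}\mathfrak{u}|_{g}^{2}\mathrm{d}V_{g}=\int_{\mathbb{R}^{N}}\psi^{2}|\nabla w|^{2}\mathrm{d}x$. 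On $\mathbb{R}^{N}$, from $\nabla u=\psi\nabla w+w\nabla\psi$ we get $|\nabla u|^{2}=\psi^{2}|\nabla w|^{2}+\tfrac12\nabla(w^{2})\cdot\nabla(\psi^{2})+w^{2}|\nabla\psi|^{2}$; integrating the middle term by parts and using the pointwise identity $\tfrac12\Delta(\psi^{2})=\psi\Delta\psi+|\nabla\psi|^{2}$ yields $\int_{\mathbb{R}^{N}}|\nabla u|^{2}\mathrm{d}x=\int_{\mathbb{R}^{N}}\psi^{2}|\nabla w|^{2}\mathrm{d}x-\int_{\mathbb{R}^{N}}w^{2}\psi\,\Delta\psi\,\mathrm{d}x$. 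Since $\psi$ is, up to a multiplicative constant, the standard Aubin--Talenti bubble, a direct computation gives $-\Delta\psi=\tfrac{N(N-2)}{4}\psi^{2^{\ast}-1}$; substituting, $-\int_{\mathbb{R}^{N}}w^{2}\psi\,\Delta\psi\,\mathrm{d}x=\tfrac{N(N-2)}{4}\int_{\mathbb{R}^{N}}w^{2}\psi^{2^{\ast}}\mathrm{d}x=\tfrac{N(N-2)}{4}\int_{\mathbb{S}^{N}}\mathfrak{u}^{2}\mathrm{d}V_{g}$. Combining the two displayed formulas gives $\int_{\mathbb{R}^{N}}|\nabla u|^{2}\mathrm{d}x=\int_{\mathbb{S}^{N}}\bigl(|\nabla_{g}\mathfrak{u}|_{g}^{2}+\tfrac{N(N-2)}{4}\mathfrak{u}^{2}\bigr)\mathrm{d}V_{g}$, as claimed. (Equivalently, this is precisely the conformal covariance of the Yamabe operator $-\Delta_{g}+\tfrac{N(N-2)}{4}$ on $\mathbb{S}^{N}$ under $g=\psi^{4/(N-2)}\delta$ together with $R_{\delta}=0$, but the computation above is elementary and self-contained.)

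The only genuinely delicate point — the ``main obstacle'' — is the first identity, and within it the two technical items: justifying the integration by parts (this is where the asymptotics $u=O(|x|^{2-N})$ and the hypothesis $N\ge4$ are used, to annihilate the boundary term and to guarantee integrability near the puncture), and recording the equation $-\Delta\psi=\tfrac{N(N-2)}{4}\psi^{2^{\ast}-1}$. Everything else is bookkeeping with the exponents $2^{\ast}-2=\tfrac{4}{N-2}$ and $\tfrac{N(2^{\ast}-2)}{2}=2^{\ast}$.
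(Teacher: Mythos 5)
Your proof is correct, and it takes a genuinely different route from the paper's. The paper invokes the conformal covariance of the conformal (Yamabe) Laplacian in the form $-\Delta_g\mathfrak{u}+\tfrac{N(N-2)}{4}\mathfrak{u}=-\psi^{1-2^{\ast}}\Delta u$, citing Hebey's book, and then multiplies by $\mathfrak{u}$ and integrates once; the last two identities are dismissed as immediate. You instead carry out a self-contained, elementary computation: you verify $\mathrm{d}V_g=\psi^{2^{\ast}}\mathrm{d}x$ and $|\nabla_g\mathfrak{u}|_g^2=\psi^{-(2^{\ast}-2)}|\nabla w|^2$ from the coordinate expression of $g$, expand $|\nabla(\psi w)|^2$, integrate the cross term by parts, and feed in the explicit equation $-\Delta\psi=\tfrac{N(N-2)}{4}\psi^{2^{\ast}-1}$ satisfied by the stereographic conformal factor. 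Your calculation is, in effect, an integrated re-derivation of the conformal Laplacian identity in this particular conformal chart, so both proofs hinge on the same fact — you reprove it, the paper cites it. What yours buys is independence from the external reference and full transparency about where the term $\tfrac{N(N-2)}{4}$ comes from; what the paper's buys is brevity. You also take more care than the paper to justify the integration by parts via the decay rates of $\psi$, $u$, and $\nabla u$ at infinity, which is a legitimate (if minor) point the paper leaves implicit. One small remark: you invoke $N\ge 4$ to control the boundary terms and integrability, but the estimates you write ($O(R^{-(N-2)})$ for the boundary term) already vanish for $N\ge 3$; $N\ge 4$ is the paper's standing hypothesis for other reasons, not a necessity here.
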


\begin{proof}
If $(M,h)$ is a Riemannian manifold of dimension $n\geq 3$, the operator $L_h:=-\Delta_h + \frac{n-2}{4(n-1)}R_h$, where $\Delta_h := \mathrm{div}_h\nabla_h$ is the Laplace-Beltrami operator (without a sign) and $R_h$ is the scalar curvature with respect to the metric $h$, is called the conformal Laplacian. It has a certain conformal invariance, which in our case is expressed by the identity
$$-\Delta_g \mathfrak{u} + \frac{N(N-2)}{4}\mathfrak{u} = -\psi^{1-2^*}\Delta u;$$
see, e.g., \cite[Proposition 6.1.1]{h}. Note that the Riemannian volume element on $(\mathbb{S}^{N},g)$ is $\mathrm{d}V_{g}=\sqrt{\det(g_{ij})}\mathrm{d}x=\psi^{2^{\ast}}\mathrm{d}x$. So, multiplying this identity by $\mathfrak{u}$ and integrating by parts, we obtain
\begin{align*}
\int_{\mathbb{S}^{N}}\left(|\nabla_{g}\mathfrak{u}|_{g}^{2}+\frac{N(N-2)}{4}\mathfrak{u}^{2}\right)\mathrm{d}V_{g} &= \int_{\mathbb{S}^{N}}\left(-(\Delta_g \mathfrak{u})\mathfrak{u} + \frac{N(N-2)}{4}\mathfrak{u}^2 \right)\mathrm{d}V_{g}\\
& =\int_{\mathbb{R}^{N}}-(\Delta u)u\mathrm{d}x =\int_{\mathbb{R}^{N}}|\nabla u|^{2}\mathrm{d}x.
\end{align*}
This is the first identity in the statement of the lemma. The other two are immediate.
\end{proof}

Taking $\left(  \int_{\mathbb{S}^{N}}(|\nabla_{g}\mathfrak{u}|_{g}%
^{2}+\frac{N(N-2)}{4}\mathfrak{u}^{2})\mathrm{d}V_{g}\right)  ^{1/2}$ as the
norm in $H_{g}^{1}(\mathbb{S}^{N}),$ we obtain a linear isometry of Hilbert
spaces $\iota:H_{g}^{1}(\mathbb{S}^{N})\rightarrow D^{1,2}(\mathbb{R}^{N})$
given by%
\begin{equation}
\iota(\mathfrak{u}):=\psi(\mathfrak{u}\circ\sigma^{-1}).
\label{eq:stereographic}%
\end{equation}

$\mathbb{S}^{N}$ is invariant under the action of the group $O(N+1)$ of linear
isometries of $\mathbb{R}^{N+1},$ so each $\gamma\in O(N+1)$ induces a linear
isometry $\gamma:H_{g}^{1}(\mathbb{S}^{N})\rightarrow H_{g}^{1}(\mathbb{S}%
^{N})$ given by%
\[
(\gamma\mathfrak{u})(p):=\mathfrak{u}(\gamma^{-1}p),\text{\qquad}%
p\in\mathbb{S}^{N},\text{ \ }\mathfrak{u}\in H_{g}^{1}(\mathbb{S}^{N}).
\]
Therefore, the composition $\iota\circ\gamma\circ\iota^{-1}:D^{1,2}%
(\mathbb{R}^{N})\rightarrow D^{1,2}(\mathbb{R}^{N})$ is a linear isometry.
This gives an action of $O(N+1)$ on $\mathbf{D},$ defined by $\gamma
(u,v):=(\gamma u,\gamma v),$ where%
\[
\gamma u:=\left(  \iota\circ\gamma\circ\iota^{-1}\right)  u,\text{\qquad
}\gamma\in O(N+1),\text{ \ }u\in D^{1,2}(\mathbb{R}^{N}).
\]
Set $\widetilde{\gamma}:=\sigma\circ\gamma^{-1}\circ\sigma^{-1}.$ As $\sigma^{-1}$ is a conformal map and $\gamma^{-1}$ is a linear isometry, we have that $|\det\mathrm{d}_x \sigma^{-1}|=\psi^{2^*}(x)$ and $|\det\mathrm{d}_p\gamma^{-1}|=1$. Therefore,
$$|\det\widetilde{\gamma}'(x)| = |\det\mathrm{d}_{(\gamma^{-1}\circ\sigma^{-1})(x)}\sigma||\det\mathrm{d}_x\sigma^{-1}|=\frac{|\det\mathrm{d}_x\sigma^{-1}|}{|\det\mathrm{d}_{\widetilde{\gamma}(x)}\sigma^{-1}|}=\left(\frac{\psi(x)}{\psi(\widetilde{\gamma}(x))}\right)^{2^*}$$
and, since $\iota^{-1}(u)=\frac{1}{\psi\circ\sigma}u\circ\sigma$, we conclude that
\begin{equation}
\label{eq:gammatilde}
\gamma u = (\iota\circ\gamma\circ\iota^{-1})u = \frac{\psi}{\psi\circ\widetilde{\gamma}}u\circ\tilde{\gamma} = |\det\widetilde{\gamma}'|^{1/2^{*}}u\circ\widetilde{\gamma}.
\end{equation}

Using Lemma \ref{lem:invariance} it is easy to see that the functional $E$ is invariant under this action, i.e.,
\[
E(\gamma(u,v))=E(u,v)\text{\quad for every }\gamma\in O(N+1),\text{ }%
(u,v)\in\mathbf{D},
\]
and so are $f$ and $h.$ If $\Gamma$ is a closed subgroup of $O(N+1),$ we write%
\[
\mathbf{D}^{\Gamma}:=\{(u,v)\in\mathbf{D}:\gamma(u,v)=(u,v)\text{ \ for every
}\gamma\in\Gamma\}
\]
for the $\Gamma$-fixed point set of $\mathbf{D}.$ By \eqref{eq:gammatilde} we have that $(u,v)\in
\mathbf{D}^{\Gamma}$ iff $(u,v)$ is $\Gamma$-invariant in the sense defined in
the introduction. Define 
\[
\mathcal{N}^{\Gamma}:=\{(u,v)\in\mathbf{D}^{\Gamma}:u\neq0,\text{ }%
v\neq0,\text{ }f(u,v)=0,\text{ }h(u,v)=0\}.
\]
Recall that a group $\Gamma$ is said to act \textit{transitively} on a set $X$ if $X$ has only one $\Gamma$-orbit.

\begin{lemma}
\label{lem:nehari_nonempty}
If $\Gamma$ does not act transitively on $\mathbb{S}^{N}$, then $\mathcal{N}^{\Gamma}\neq\emptyset$.
\end{lemma}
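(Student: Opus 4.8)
\emph{Proof plan.} The plan is to use the failure of transitivity to build two nonzero $\Gamma$-invariant functions on $\mathbb{S}^{N}$ with disjoint supports, to transplant them to $D^{1,2}(\mathbb{R}^{N})$ through the isometry $\iota$ of \eqref{eq:stereographic}, and finally to rescale the two components independently so that the resulting pair lands on $\mathcal{N}^{\Gamma}$.

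First I observe that $\Gamma$, being a closed subgroup of the compact group $O(N+1)$, is compact, so every orbit $\Gamma p\subset\mathbb{S}^{N}$ is compact. Since $\Gamma$ does not act transitively, there exist $p,q\in\mathbb{S}^{N}$ with $q\notin\Gamma p$; then $\Gamma p$ and $\Gamma q$ are disjoint compact subsets of $\mathbb{S}^{N}$, so $3\delta:=d(\Gamma p,\Gamma q)>0$, where $d$ denotes the geodesic distance on $(\mathbb{S}^{N},g)$. Because $\Gamma$ acts on $\mathbb{S}^{N}$ by isometries and the sets $\Gamma p,\Gamma q$ are $\Gamma$-invariant, the functions
\[
\mathfrak{u}:=\bigl(\delta-d(\cdot,\Gamma p)\bigr)^{+},\qquad\mathfrak{v}:=\bigl(\delta-d(\cdot,\Gamma q)\bigr)^{+}
\]
are Lipschitz on $\mathbb{S}^{N}$, hence lie in $H_{g}^{1}(\mathbb{S}^{N})$; they are $\Gamma$-invariant since $d(\gamma^{-1}x,\Gamma p)=d(x,\gamma\,\Gamma p)=d(x,\Gamma p)$ for every $\gamma\in\Gamma$; they are nonzero, since each attains the value $\delta$ on the corresponding orbit; and their supports lie in the $\delta$-neighborhoods of $\Gamma p$ and $\Gamma q$ respectively, which are disjoint because $2\delta<3\delta=d(\Gamma p,\Gamma q)$.

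Next, set $u:=\iota(\mathfrak{u})$ and $v:=\iota(\mathfrak{v})$. Since $\iota$ is an isometry that intertwines the two $O(N+1)$-actions (as in the derivation of \eqref{eq:gammatilde}), the functions $u$ and $v$ are nonzero and $\Gamma$-invariant in the sense of the introduction. By the third identity in Lemma \ref{lem:invariance} (which extends from $\mathcal{C}^{\infty}(\mathbb{S}^{N})$ to $H_{g}^{1}(\mathbb{S}^{N})$ by continuity) we obtain
\[
\int_{\mathbb{R}^{N}}|u|^{\alpha}|v|^{\beta}=\int_{\mathbb{S}^{N}}|\mathfrak{u}|^{\alpha}|\mathfrak{v}|^{\beta}\,\mathrm{d}V_{g}=0,
\]
because $\mathfrak{u}$ and $\mathfrak{v}$ have disjoint supports. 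Consequently, for $s,t>0$ the two equations $f(su,tv)=0$ and $h(su,tv)=0$ decouple and read $s^{2^{\ast}-2}=\bigl(\mu_{1}\int_{\mathbb{R}^{N}}|u|^{2^{\ast}}\bigr)^{-1}\int_{\mathbb{R}^{N}}|\nabla u|^{2}$ and $t^{2^{\ast}-2}=\bigl(\mu_{2}\int_{\mathbb{R}^{N}}|v|^{2^{\ast}}\bigr)^{-1}\int_{\mathbb{R}^{N}}|\nabla v|^{2}$, each of which has a unique positive solution because $u\neq0\neq v$. With this choice of $s,t$, the pair $(su,tv)$ is $\Gamma$-invariant, has both components nontrivial, and satisfies $f(su,tv)=h(su,tv)=0$, so $(su,tv)\in\mathcal{N}^{\Gamma}$, which is therefore nonempty.

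The argument is essentially soft: the hypothesis is used only to guarantee the existence of two distinct $\Gamma$-orbits, and the sole minor technical points are that the distance-to-orbit functions genuinely belong to $H_{g}^{1}(\mathbb{S}^{N})$ and that Lemma \ref{lem:invariance} may be applied to them. I do not anticipate any substantial obstacle; if one prefers smooth test functions one can instead average an arbitrary nonnegative bump function supported near $p$ (resp.\ $q$) over $\Gamma$ with respect to Haar measure, which produces the same kind of $\Gamma$-invariant functions with disjoint supports.
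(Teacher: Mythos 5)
Your proposal is correct and follows essentially the same route as the paper: pick two disjoint $\Gamma$-orbits, produce two nonzero $\Gamma$-invariant functions on $\mathbb{S}^{N}$ supported in disjoint neighborhoods of those orbits, transplant them to $\mathbb{R}^{N}$ via the stereographic isometry, and rescale each component independently to land on $\mathcal{N}^{\Gamma}$. The only difference is that you construct the invariant bump functions explicitly as truncated distance-to-orbit functions (which are Lipschitz, hence in $H^1_g$), whereas the paper simply asserts the existence of smooth $\Gamma$-invariant functions with disjoint supports; both are fine.
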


\begin{proof}
Since $\Gamma$ does not act transitively on $\mathbb{S}^{N}$, there are two points in $\mathbb{S}^{N}$ whose $\Gamma$-orbits are disjoint. Taking two nontrivial $\Gamma$-invariant functions in $\mathcal{C}^{\infty}(\mathbb{S}^{N})$ whose supports lie in disjoint neighborhoods of these orbits, and composing them with the inverse of the stereographic projection, we obtain a pair of nontrivial functions $(u,v)\in\mathbf{D}^{\Gamma}$ with $\mathrm{supp}(u) \cap \mathrm{supp}(v) = \emptyset$. Setting $s,t\in(0,\infty)$ such that 
$$\int_{\mathbb{R}^{N}}|\nabla (su)|^{2}=\mu_{1}\int_{\mathbb{R}^{N}}|su|^{2^*} \qquad\text{and}\qquad\int_{\mathbb{R}^{N}}|\nabla (tv)|^{2}=\mu_{2}\int_{\mathbb{R}^{N}}|tv|^{2^*},$$
we get that $(su,tv)\in\mathcal{N}^{\Gamma}$.
\end{proof}

We assume from now on that $\Gamma$ does not act transitively on $\mathbb{S}^{N}$. 

It is easy to see that $\nabla E(u,v),\nabla f(u,v),\nabla h(u,v)\in\mathbf{D}^{\Gamma}$ for every $(u,v)\in\mathbf{D}^{\Gamma}$; cf.
Theorem 1.28 in \cite{w}. Then, it follows from Proposition \ref{prop:nehari} that $\mathcal{N}^{\Gamma}$ is a closed $\mathcal{C}^{1}$-submanifold of $\mathbf{D}^{\Gamma}$ and a natural constraint for $E.$ The tangent space to $\mathcal{N}^{\Gamma}$\ at the point $(u,v)$ is the orthogonal complement in $\mathbf{D}^{\Gamma}$ of the
linear subspace generated by $\nabla f(u,v)$ and $\nabla h(u,v).$

The following fact plays a crucial role in the proof of Proposition
\ref{prop:PS} below.

\begin{proposition}
\label{prop:compactness}If the $\Gamma$-orbit $\Gamma p:=\{\gamma p:\gamma\in\Gamma\}$\ of every point $p\in\mathbb{S}^{N}$ has positive dimension, then the embedding $\mathbf{D}^{\Gamma}\hookrightarrow L^{2^{\ast}}(\mathbb{R}^{N})\times L^{2^{\ast}}(\mathbb{R}^{N})$ is compact.
\end{proposition}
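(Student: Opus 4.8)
The plan is to transport the problem to the sphere $\mathbb{S}^{N}$, on which $\Gamma$ acts by isometries with all orbits of positive dimension, and then to invoke the Sobolev embedding theorem for symmetric functions, which raises the attainable exponent because the ``effective dimension'' for $\Gamma$-invariant functions is that of the orbit space rather than of $\mathbb{S}^{N}$. Concretely, I would first use the linear isometry $\iota:H_{g}^{1}(\mathbb{S}^{N})\to D^{1,2}(\mathbb{R}^{N})$ of \eqref{eq:stereographic}. By Lemma \ref{lem:invariance} it also intertwines the $L^{2^{\ast}}$-norms, $\|\iota(\mathfrak{u})\|_{L^{2^{\ast}}(\mathbb{R}^{N})}=\|\mathfrak{u}\|_{L^{2^{\ast}}(\mathbb{S}^{N})}$, and by the very definition of the $O(N+1)$-action on $D^{1,2}(\mathbb{R}^{N})$ it is $O(N+1)$-equivariant; hence it maps the subspace $H_{g}^{1}(\mathbb{S}^{N})^{\Gamma}$ of $\Gamma$-invariant functions isometrically onto $\{u\in D^{1,2}(\mathbb{R}^{N}):\gamma u=u\text{ for all }\gamma\in\Gamma\}$. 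Since $(u,v)\in\mathbf{D}^{\Gamma}$ precisely when each of $u,v$ lies in this last space, the embedding in the statement is compact if and only if $H_{g}^{1}(\mathbb{S}^{N})^{\Gamma}\hookrightarrow L^{2^{\ast}}(\mathbb{S}^{N})$ is compact. As $N(N-2)/4>0$, the norm of $H_{g}^{1}(\mathbb{S}^{N})$ is equivalent to the usual norm of the Sobolev space $W^{1,2}(\mathbb{S}^{N})$, so the whole matter is reduced to the compactness of $W^{1,2}(\mathbb{S}^{N})^{\Gamma}\hookrightarrow L^{2^{\ast}}(\mathbb{S}^{N})$.

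For that, set $\ell:=\min\{\dim(\Gamma p):p\in\mathbb{S}^{N}\}$; this minimum is attained (a smooth action of a compact Lie group on a compact manifold has only finitely many orbit types, and $p\mapsto\dim(\Gamma p)$ is lower semicontinuous on the compact $\mathbb{S}^{N}$), and by hypothesis $\ell\geq1$. Now I would appeal to the Sobolev embedding theorem in the presence of symmetries: if a compact Lie group $G$ acts by isometries on a compact Riemannian $n$-manifold $M$ all of whose orbits have dimension at least $\ell$, then the embedding $W^{1,2}(M)^{G}\hookrightarrow L^{q}(M)$ of the space of $G$-invariant functions is compact for every $q$ with $1\leq q<\tfrac{2(n-\ell)}{n-\ell-2}$ when $n-\ell>2$, and for every $q\geq1$ when $n-\ell\leq2$; see, e.g., \cite{d} and the references therein. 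Applied with $M=\mathbb{S}^{N}$ and $G=\Gamma$: if $N-\ell\leq2$ there is nothing left to prove; and if $N-\ell>2$, then $\ell\geq1$ gives $N-\ell<N$, so, as $x\mapsto\tfrac{2x}{x-2}$ is strictly decreasing on $(2,\infty)$, we get $\tfrac{2(N-\ell)}{N-\ell-2}>\tfrac{2N}{N-2}=2^{\ast}$ and hence $q=2^{\ast}$ is admissible. In either case $W^{1,2}(\mathbb{S}^{N})^{\Gamma}\hookrightarrow L^{2^{\ast}}(\mathbb{S}^{N})$ is compact, which together with the first step proves the proposition.

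The one substantial ingredient is the symmetric Sobolev embedding, and that is the step I would single out as the main obstacle if a self-contained proof were wanted. The route to it is to localize around orbits: by the slice theorem, a $\Gamma$-invariant tubular neighborhood of an orbit of dimension $d\geq\ell$ is $\Gamma$-equivariantly modelled on $\Gamma\times_{\Gamma_{p}}B$ with $B$ an $(N-d)$-dimensional disk, so that integrating out the orbit directions shows that $\Gamma$-invariant functions supported there are controlled by $W^{1,2}$-functions in dimension $N-d\leq N-\ell$; patching finitely many such estimates with a $\Gamma$-invariant partition of unity yields the continuous embedding into $L^{q_{0}}$, $q_{0}:=\tfrac{2(N-\ell)}{N-\ell-2}$ (or into $L^{q}$ for all $q$ if $N-\ell\leq2$), and one upgrades this to compactness for $q<q_{0}$ by interpolating the $L^{q}$-norm between an $L^{r}$-norm with $r<2^{\ast}$, where the ordinary Rellich-Kondrachov theorem already gives compactness, and the $L^{q_{0}}$-norm. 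The hypothesis $\ell\geq1$ enters precisely here: it keeps the dimension drop $N-d\leq N-1<N$ uniform over all orbits, whereas at a fixed point ($d=0$) no improvement over the critical exponent is available — which is exactly why linear isometries of $\mathbb{R}^{N}$, whose action always fixes the origin, cannot be used to restore compactness.
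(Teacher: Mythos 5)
Your proof takes essentially the same route as the paper: transport to the sphere via the isometry $\iota$ of \eqref{eq:stereographic}, reduce to compactness of $H^1_g(\mathbb{S}^N)^\Gamma\hookrightarrow L^{2^*}_g(\mathbb{S}^N)$, and invoke the symmetric Sobolev embedding theorem (the paper cites \cite{d,hv} for precisely this). The only difference is that you additionally sketch the slice-theorem/partition-of-unity proof of the Hebey--Vaugon type embedding, which the paper treats as a black box.
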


\begin{proof}
It is shown in \cite{d, hv} that the embedding $H_{g}^{1}(\mathbb{S}%
^{N})^{\Gamma}\hookrightarrow L_{g}^{2^{\ast}}(\mathbb{S}^{N})$ is compact.
The map defined in (\ref{eq:stereographic}) is an isometry between the
$\Gamma$-fixed point spaces $\iota:H_{g}^{1}(\mathbb{S}^{N})^{\Gamma
}\rightarrow D^{1,2}(\mathbb{R}^{N})^{\Gamma}$ and the Lebesgue spaces
$\iota:L_{g}^{2^{\ast}}(\mathbb{S}^{N})\rightarrow L^{2^{\ast}}(\mathbb{R}%
^{N}).$ Therefore, the embedding $D^{1,2}(\mathbb{R}^{N})^{\Gamma
}\hookrightarrow L^{2^{\ast}}(\mathbb{R}^{N})$ is compact, and so is%
\[
\mathbf{D}^{\Gamma}=D^{1,2}(\mathbb{R}^{N})^{\Gamma}\times D^{1,2}%
(\mathbb{R}^{N})^{\Gamma}\hookrightarrow L^{2^{\ast}}(\mathbb{R}^{N})\times
L^{2^{\ast}}(\mathbb{R}^{N}),
\]
as claimed.
\end{proof}

Let us give some examples.

\begin{example}
\label{example}

\begin{enumerate}
\item If $m+n=N+1,$ the group $\Gamma:=O(m)\times O(n)$ acts on $\mathbb{R}%
^{N+1}\equiv\mathbb{R}^{m}\times\mathbb{R}^{n}$ in the obvious way. The
$\Gamma$-orbit of a point $(x_{0},y_{0})\in\mathbb{R}^{m}\times\mathbb{R}^{n}$
is the set
\[
\Gamma(x_{0},y_{0})=\left\{  (x,y)\in\mathbb{R}^{m}\times\mathbb{R}%
^{n}:\left\vert x\right\vert =\left\vert x_{0}\right\vert ,\text{ }\left\vert
y\right\vert =\left\vert y_{0}\right\vert \right\}  ,
\]
so the $\Gamma$-orbit of every point $p\in\mathbb{S}^{N}$ has positive
dimension iff $m,n\geq2$.

\item For $N$ odd, another example is obtained by taking $\Gamma
:=\mathbb{S}^{1}$ to be the group of unit complex numbers acting on
$\mathbb{C}^{(N+1)/2}\equiv\mathbb{R}^{N+1}$ by multiplication on each complex
coordinate. Then, the $\Gamma$-orbit of every point in $\mathbb{S}^{N}$ is a circle.
\end{enumerate}
\end{example}

We write $\nabla_{\mathcal{N}}E(u,v)$ for the orthogonal projection of $\nabla
E(u,v)$ onto the tangent space of $\mathcal{N}$ at $(u,v).$

\begin{lemma}
\label{lem:PS}If $((u_{k},v_{k}))$ is a sequence in $\mathcal{N}$ such that%
\[
E(u_{k},v_{k})\rightarrow c\text{\qquad and\qquad}\nabla_{\mathcal{N}}%
E(u_{k},v_{k})\rightarrow0,
\]
then $((u_{k},v_{k}))$ is bounded in $\mathbf{D}$ and $\nabla E(u_{k}%
,v_{k})\rightarrow0.$
\end{lemma}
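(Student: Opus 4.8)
The plan is to argue by contradiction in two stages: first establish boundedness, then upgrade the constrained gradient condition to an unconstrained one. For boundedness, observe that on $\mathcal{N}$ we have the identity $E(u,v)=E(u,v)-\tfrac{1}{2^{\ast}}E^{\prime}(u,v)[(u,v)]=\tfrac{1}{N}(\|\nabla u\|_{2}^{2}+\|\nabla v\|_{2}^{2})$, exactly as in the proof of Proposition \ref{prop:nonexistence}. Hence if $E(u_{k},v_{k})\to c$, the sequence $((u_{k},v_{k}))$ is automatically bounded in $\mathbf{D}$, with no need to invoke $\nabla_{\mathcal{N}}E(u_{k},v_{k})\to 0$ at all. (One should note $c>0$ here, since Proposition \ref{prop:nehari}(a) forces $E\geq\tfrac{1}{N}(\mu_{1}^{-(N-2)/2}+\mu_{2}^{-(N-2)/2})S^{N/2}>0$ on $\mathcal{N}$.)

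For the second part, write $\nabla E(u_{k},v_{k})=\nabla_{\mathcal{N}}E(u_{k},v_{k})+s_{k}\nabla f(u_{k},v_{k})+t_{k}\nabla h(u_{k},v_{k})$, the decomposition into the tangent space of $\mathcal{N}$ and its orthogonal complement (the span of $\nabla f$ and $\nabla h$, which are independent by Proposition \ref{prop:nehari}(b)). Taking the scalar product with $(u_{k},0)$ and with $(0,v_{k})$ and using $f(u_{k},v_{k})=h(u_{k},v_{k})=0$, we get
\[
\begin{cases}
s_{k}\,a_{11}^{(k)}+t_{k}\,a_{12}^{(k)}=\langle\nabla_{\mathcal{N}}E(u_{k},v_{k}),(u_{k},0)\rangle,\\[2pt]
s_{k}\,a_{21}^{(k)}+t_{k}\,a_{22}^{(k)}=\langle\nabla_{\mathcal{N}}E(u_{k},v_{k}),(0,v_{k})\rangle,
\end{cases}
\]
where $a_{ij}^{(k)}$ are exactly the entries computed in the proof of Proposition \ref{prop:nehari}(b). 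Since $((u_{k},v_{k}))$ is bounded, the right-hand sides tend to $0$. The key point is that the matrix $(a_{ij}^{(k)})$ is uniformly invertible: by the estimates leading to $\det(a_{ij})\geq(2^{\ast}-2)c_{0}\alpha\beta(-\lambda)\int_{\mathbb{R}^{N}}|u|^{\alpha}|v|^{\beta}$ when the coupling integral is positive, and $\det(a_{ij})\geq(2-2^{\ast})^{2}c_{0}^{2}$ when it vanishes, one checks that in all cases $\det(a_{ij}^{(k)})\geq(2^{\ast}-2)^{2}c_{0}^{2}>0$ (the first, degenerate case already gives this; in the second case one instead bounds $\det$ below by a positive multiple of the maximum of the diagonal entries, which are themselves bounded below using Proposition \ref{prop:nehari}(a)). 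Combined with the upper bound $|a_{ij}^{(k)}|\leq C$ coming from boundedness of the sequence, Cramer's rule yields $s_{k},t_{k}\to 0$.

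Finally, from $s_{k},t_{k}\to 0$, boundedness of $\nabla f(u_{k},v_{k})$ and $\nabla h(u_{k},v_{k})$ in $\mathbf{D}$ (which follows since the sequence is bounded and $\nabla f,\nabla h$ depend continuously and with at-most-polynomial growth on $(u,v)$), and $\nabla_{\mathcal{N}}E(u_{k},v_{k})\to 0$, we conclude $\nabla E(u_{k},v_{k})\to 0$ in $\mathbf{D}$. I expect the only delicate point to be the uniform lower bound on $\det(a_{ij}^{(k)})$: one must treat separately the two regimes (coupling integral bounded away from $0$, versus tending to $0$), and in the borderline regime show the lower bound does not degenerate — but this is precisely the content of the determinant computation already carried out in Proposition \ref{prop:nehari}(b), so it requires only bookkeeping, not new ideas. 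Everything else is routine.
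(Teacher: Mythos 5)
Your overall strategy is the same as the paper's: boundedness from the Nehari identity, then write $\nabla E = \nabla_{\mathcal N}E + s_k\nabla f + t_k\nabla h$, pair with $(u_k,0)$ and $(0,v_k)$ to get a $2\times 2$ linear system in $(s_k,t_k)$ with the matrix $(a_{ij}^{(k)})$, and conclude via a lower bound on the determinant. The one place where you deviate — and where your proof has a genuine gap — is the claimed \emph{uniform} lower bound $\det(a_{ij}^{(k)})\geq(2^{\ast}-2)^2c_0^2$ and, more importantly, the argument you offer for it. You assert that one can bound $\det$ below by ``a positive multiple of the maximum of the diagonal entries.'' This cannot work: since $a_{12}^{(k)}=a_{21}^{(k)}$, one has $\det(a_{ij}^{(k)})=a_{11}^{(k)}a_{22}^{(k)}-(a_{12}^{(k)})^2\leq a_{11}^{(k)}a_{22}^{(k)}$, so the diagonal product is an \emph{upper} bound for the determinant, not a lower one. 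And the bound recorded in the proof of Proposition \ref{prop:nehari}(b), namely $\det\geq(2^{\ast}-2)c_0\alpha\beta(-\lambda)\int|u|^{\alpha}|v|^{\beta}$, degenerates as the coupling integral tends to $0$, so it cannot by itself deliver the uniform constant you want. What you call ``only bookkeeping'' is in fact the one non-routine step.

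There are two honest ways to fill it. The paper's way: pass to a subsequence so that $I_k:=\int|u_k|^{\alpha}|v_k|^{\beta}\to b\in[0,\infty)$, and treat the two regimes separately — if $b>0$ the displayed bound stays away from $0$ for large $k$, and if $b=0$ the off-diagonal and coupling terms vanish in the limit, so for large $k$ the determinant is within $o(1)$ of $(2-2^{\ast})^2\mu_1\mu_2\int|u_k|^{2^{\ast}}\int|v_k|^{2^{\ast}}\geq(2-2^{\ast})^2c_0^2$; this gives $\det(a_{ij}^{(k)})\geq\frac{1}{2}(2-2^{\ast})^2c_0^2$ eventually. Alternatively, your uniform bound $\det\geq(2^{\ast}-2)^2c_0^2$ \emph{is} in fact true without extracting a subsequence, but it requires redoing the determinant estimate more carefully: with $\kappa:=c_0/((-\lambda)I)$, the inequalities $A\geq\kappa+\alpha$, $B\geq\kappa+\beta$ yield, after expanding and noting that the $\alpha\beta$-terms cancel,
\begin{align*}
\frac{\det(a_{ij})}{\bigl((-\lambda)I\bigr)^2}
&\geq (2^{\ast}-2)^2\kappa^2+(2^{\ast}-2)^2\,2^{\ast}\kappa+(2^{\ast}-2)\kappa\bigl[\alpha(2-\alpha)+\beta(2-\beta)\bigr]\\
&\geq(2^{\ast}-2)^2\kappa^2,
\end{align*}
hence $\det(a_{ij})\geq(2^{\ast}-2)^2c_0^2$, independent of $I$. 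If you prefer the uniform-bound route, you should carry out this computation explicitly rather than appeal to the diagonal entries; as stated your justification does not hold up.
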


\begin{proof}
If $(u_{k},v_{k})\in\mathcal{N}$, $E(u_{k},v_{k})\to c$ and $\nabla_{\mathcal{N}}E(u_{k},v_{k})\to 0$ then, as
\[
E(u_{k},v_{k})=E(u_{k},v_{k})-\frac{1}{2^{\ast}}E^{\prime}(u_{k},v_{k})\left[
(u_{k},v_{k})\right]  =\frac{1}{N}\int_{\mathbb{R}^{N}}\left(  \left\vert
\nabla u_{k}\right\vert ^{2}+\left\vert \nabla v_{k}\right\vert ^{2}\right)
,
\]
we have that $((u_{k},v_{k}))$ is bounded in $\mathbf{D.}$ This easily implies
that $(\nabla f(u_{k},v_{k}))$ and $(\nabla h(u_{k},v_{k}))$ are bounded in
$\mathbf{D.}$ Let $s_{k},t_{k}\in\mathbb{R}$ be such that
\begin{equation}
\nabla E(u_{k},v_{k})=\nabla_{\mathcal{N}}E(u_{k},v_{k})+s_{k}\nabla
f(u_{k},v_{k})+t_{k}\nabla h(u_{k},v_{k}). \label{eq:gradient}%
\end{equation}
As $(u_{k},v_{k})\in\mathcal{N}$ and $\nabla_{\mathcal{N}}E(u_{k}%
,v_{k})\rightarrow0,$ taking the scalar product of this identity with
$(u_{k},0)$ and $(0,v_{k}),$ we get that $s_{k}$ and $t_{k}$ solve the system%
\begin{equation}
\left\{
\begin{array}
[c]{c}%
o(1)=s_{k}a_{11}^{(k)}+t_{k}a_{12}^{(k)},\\
o(1)=s_{k}a_{12}^{(k)}+t_{k}a_{22}^{(k)},
\end{array}
\right.  \label{eq:system_k}%
\end{equation}
where $o(1)\rightarrow0$ as $k\rightarrow\infty,$%
\begin{align*}
a_{11}^{(k)}  &  :=\left(  2-2^{\ast}\right)  \mu_{1}\int_{\mathbb{R}^{N}%
}|u_{k}|^{{2}^{\ast}}+\lambda\alpha(2-\alpha)\int_{\mathbb{R}^{N}}%
|u_{k}|^{{\alpha}}|v_{k}|^{{\beta}},\\
a_{12}^{(k)}  &  :=-\lambda\alpha\beta\int_{\mathbb{R}^{N}}|u_{k}|^{{\alpha}%
}|v_{k}|^{{\beta}}=:a_{21}^{(k)}\\
a_{22}^{(k)}  &  :=\left(  2-2^{\ast}\right)  \mu_{2}\int_{\mathbb{R}^{N}%
}|v_{k}|^{{2}^{\ast}}+\lambda\beta(2-\beta)\int_{\mathbb{R}^{N}}%
|u_{k}|^{{\alpha}}|v_{k}|^{{\beta}}.
\end{align*}
After passing to a subsequence, we have that $\int_{\mathbb{R}^{N}}%
|u_{k}|^{{\alpha}}|v_{k}|^{{\beta}}\rightarrow b\in\lbrack0,\infty).$ If
$b=0,$ the statement (a) of Proposition \ref{prop:nehari}\ implies that
\[
\det(a_{ij}^{(k)})\geq\frac{1}{2}\left(  2-2^{\ast}\right)  ^{2}c_{0}%
^{2}>0\text{\qquad for }k\text{ large enough,}%
\]
where $c_{0}:=\min\{\mu_{1}^{-(N-2)/2},\mu_{2}^{-(N-2)/2}\}S^{N/2}.$ If $b>0,$
then (\ref{eq:det}) implies that
\begin{align*}
\det(a_{ij}^{(k)})  &  \geq\left(  2^{\ast}-2\right)  c_{0}\alpha\beta\left(
-\lambda\right)  \int_{\mathbb{R}^{N}}|u_{k}|^{{\alpha}}|v_{k}|^{{\beta}}\\
&  \geq\frac{1}{2}\left(  2^{\ast}-2\right)  c_{0}\alpha\beta\left(
-\lambda\right)  d>0\text{\qquad for }k\text{ large enough.}%
\end{align*}
Therefore, the system (\ref{eq:system_k}) has a unique solution $(s_{k}%
,t_{k})$ for large enough $k$ and, as $((u_{k},v_{k}))$ is bounded in
$\mathbf{D,}$ after passing to a subsequence, we conclude that $s_{k}%
\rightarrow0$ and $t_{k}\rightarrow0.$ From the identity (\ref{eq:gradient})
we get that $\nabla E(u_{k},v_{k})\rightarrow0,$ as claimed.
\end{proof}

\begin{proposition}
\label{prop:PS}If the $\Gamma$-orbit of every point $p\in\mathbb{S}^{N}$ has
positive dimension, then every sequence $((u_{k},v_{k}))$ in $\mathcal{N}%
^{\Gamma}$ such that%
\[
E(u_{k},v_{k})\rightarrow c\text{\qquad and\qquad}\nabla_{\mathcal{N}}%
E(u_{k},v_{k})\rightarrow0
\]
contains a convergent subsequence.
\end{proposition}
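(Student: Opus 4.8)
The plan is to prove compactness of Palais--Smale sequences for $E$ restricted to $\mathcal{N}^{\Gamma}$ by combining Lemma~\ref{lem:PS} with the compact embedding from Proposition~\ref{prop:compactness}. First I would invoke Lemma~\ref{lem:PS} to deduce that the sequence $((u_k,v_k))$ is bounded in $\mathbf{D}^{\Gamma}$ and that $\nabla E(u_k,v_k)\to 0$ in $\mathbf{D}^{\Gamma}$ (the gradient being taken in the fixed-point subspace, which by the principle of symmetric criticality equals the full gradient). After passing to a subsequence we have $(u_k,v_k)\rightharpoonup(u,v)$ weakly in $\mathbf{D}^{\Gamma}$, and by Proposition~\ref{prop:compactness} the convergence $u_k\to u$ and $v_k\to v$ is \emph{strong} in $L^{2^{\ast}}(\mathbb{R}^N)$.

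Next I would upgrade this to strong convergence in $\mathbf{D}$. The standard device is to test $\nabla E(u_k,v_k)-\nabla E(u,v)$ against $(u_k-u,v_k-v)$: since $\nabla E(u_k,v_k)\to 0$ and $(u_k-u,v_k-v)$ is bounded, the term $\langle\nabla E(u_k,v_k),(u_k-u,v_k-v)\rangle\to 0$, while $\langle\nabla E(u,v),(u_k-u,v_k-v)\rangle\to 0$ by weak convergence. The leading part of the difference is $\|\nabla(u_k-u)\|_{L^2}^2+\|\nabla(v_k-v)\|_{L^2}^2$, and all the remaining nonlinear terms (the $\mu_i|u|^{2^{\ast}-2}u$ pieces and the coupling terms $|u|^{\alpha-2}|v|^{\beta}u$ and $|u|^{\alpha}|v|^{\beta-2}v$) are continuous maps from $L^{2^{\ast}}\times L^{2^{\ast}}$ into their dual and hence converge strongly along the subsequence; therefore their contributions to the pairing vanish in the limit. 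It follows that $\|\nabla(u_k-u)\|_{L^2}^2+\|\nabla(v_k-v)\|_{L^2}^2\to 0$, i.e.\ $(u_k,v_k)\to(u,v)$ strongly in $\mathbf{D}^{\Gamma}$.

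Finally I would note that the limit $(u,v)$ still lies in $\mathcal{N}^{\Gamma}$: strong convergence passes to the constraints $f(u_k,v_k)=0\to f(u,v)$ and $h(u_k,v_k)=0\to h(u,v)$, so $f(u,v)=h(u,v)=0$; and the lower bounds in Proposition~\ref{prop:nehari}(a), which are stable under the strong convergence $\int|\nabla u_k|^2\to\int|\nabla u|^2$, guarantee $u\neq 0$ and $v\neq 0$. Hence $((u_k,v_k))$ has a subsequence converging in $\mathcal{N}^{\Gamma}$, which is the assertion.

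The main obstacle — and the point where I would be most careful — is justifying that the nonlinear and coupling terms behave continuously under mere $L^{2^{\ast}}$-convergence; in particular the coupling densities $|u|^{\alpha}|v|^{\beta}$ need $\alpha,\beta\in(1,2]$ with $\alpha+\beta=2^{\ast}$ so that, by Hölder, the maps $(u,v)\mapsto|u|^{\alpha-2}|v|^{\beta}u$ and $(u,v)\mapsto|u|^{\alpha}|v|^{\beta-2}v$ are continuous $L^{2^{\ast}}\times L^{2^{\ast}}\to L^{(2^{\ast})'}$. Everything else is the routine ``Palais--Smale plus compact embedding'' argument; the symmetry only enters through Proposition~\ref{prop:compactness}.
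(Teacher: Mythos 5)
Your proposal is correct and follows essentially the same route as the paper: invoke Lemma~\ref{lem:PS} for boundedness and $\nabla E(u_k,v_k)\to 0$, use Proposition~\ref{prop:compactness} for strong $L^{2^{\ast}}$-convergence of a subsequence, and then test the gradient against $(u_k-u,v_k-v)$ with a H\"older estimate on the critical and coupling terms to upgrade to strong $\mathbf{D}$-convergence. The only cosmetic difference is that the paper tests $\partial_u E(u_k,v_k)$ and $\partial_v E(u_k,v_k)$ separately against $u_k-u$ and $v_k-v$, whereas you test $\nabla E(u_k,v_k)-\nabla E(u,v)$ against the full difference; these are interchangeable, and your closing remark that the limit lies in $\mathcal{N}^{\Gamma}$ (which follows from closedness of $\mathcal{N}^{\Gamma}$) is a correct, if unneeded, addition.
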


\begin{proof}
By Lemma \ref{lem:PS}\ and Proposition \ref{prop:compactness}, $((u_{k}%
,v_{k}))$ is bounded in $\mathbf{D}$ and the embedding $\mathbf{D}^{\Gamma
}\hookrightarrow L^{2^{\ast}}(\mathbb{R}^{N})\times L^{2^{\ast}}%
(\mathbb{R}^{N})$ is compact. So, after passing to a subsequence, we have that
$(u_{k},v_{k})\rightharpoonup(u,v)$ weakly in $\mathbf{D}$ and $(u_{k}%
,v_{k})\rightarrow(u,v)$ strongly in $L^{2^{\ast}}(\mathbb{R}^{N})\times
L^{2^{\ast}}(\mathbb{R}^{N}).$ It follows that%
\begin{align*}
&  \left\vert \mu_{1}\int_{\mathbb{R}^{N}}|u_{k}|^{{2}^{\ast}-2}u_{k}%
(u_{k}-u)+\lambda\alpha\int_{\mathbb{R}^{N}}|u_{k}|^{{\alpha-2}}u_{k}%
(u_{k}-u)|v_{k}|^{{\beta}}\right\vert \\
&  \leq\mu_{1}|u_{k}|_{2^{\ast}}^{{2}^{\ast}-1}|u_{k}-u|_{2^{\ast}}%
-\lambda\alpha|u_{k}|_{2^{\ast}}^{\alpha-1}|v_{k}|_{2^{\ast}}^{\beta}%
|u_{k}-u|_{2^{\ast}}\\
&  \leq C|u_{k}-u|_{2^{\ast}}=o(1),
\end{align*}
where $|\cdot|_{2^{\ast}}$ denotes the norm in $L^{2^{\ast}}(\mathbb{R}^{N}).$
As $\nabla E(u_{k},v_{k})\rightarrow0,$ we get that%
\begin{align*}
o(1)  &  =\partial_{u}E(u_{k},v_{k})\left[  u_{k}-u\right] \\
&  =\int_{\mathbb{R}^{N}}\nabla u_{k}\cdot\nabla(u_{k}-u)\\
&  \qquad-\mu_{1}\int_{\mathbb{R}^{N}}|u_{k}|^{{2}^{\ast}-2}u_{k}%
(u_{k}-u)-\lambda\alpha\int_{\mathbb{R}^{N}}|u_{k}|^{{\alpha-2}}u_{k}%
(u_{k}-u)|v_{k}|^{{\beta}}\\
&  =\int_{\mathbb{R}^{N}}\left\vert \nabla u_{k}\right\vert ^{2}%
-\int_{\mathbb{R}^{N}}\left\vert \nabla u\right\vert ^{2}+o(1).
\end{align*}
Therefore, $u_{k}\rightarrow u$ strongly in $D^{1,2}(\mathbb{R}^{N}).$
Similarly, as $\partial_{v}E(u_{k},v_{k})\left[  v_{k}-v\right]  =o(1),$ we
get that $v_{k}\rightarrow v$ strongly in $D^{1,2}(\mathbb{R}^{N}).$
\end{proof}

\begin{theorem}
\label{thm:existence}If $\Gamma$ does not act transitively on $\mathbb{S}^{N}$ and  the $\Gamma$-orbit of every point $p\in\mathbb{S}^{N}$ has positive dimension, then $E$ has a positive minimizer on
$\mathcal{N}^{\Gamma}.$
\end{theorem}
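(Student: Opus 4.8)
The plan is to minimize $E$ over $\mathcal{N}^{\Gamma}$, which is nonempty by Lemma \ref{lem:nehari_nonempty}, and to recover compactness from Proposition \ref{prop:PS}. First I would record that $\mathcal{N}^{\Gamma}=\mathcal{N}\cap\mathbf{D}^{\Gamma}$ is closed in $\mathbf{D}$ (hence a complete metric space), that it is a $\mathcal{C}^{1}$-submanifold of $\mathbf{D}^{\Gamma}$ and a natural constraint for $E$, and that $E$ is bounded below on it: for $(u,v)\in\mathcal{N}$ the identity $E(u,v)=E(u,v)-\frac{1}{2^{\ast}}E'(u,v)[(u,v)]=\frac{1}{N}\int_{\mathbb{R}^{N}}(|\nabla u|^{2}+|\nabla v|^{2})$ together with Proposition \ref{prop:nehari}(a) gives $E(u,v)\geq c_{0}:=\frac{1}{N}(\mu_{1}^{-(N-2)/2}+\mu_{2}^{-(N-2)/2})S^{N/2}>0$. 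Thus $c^{\Gamma}:=\inf_{\mathcal{N}^{\Gamma}}E\in[c_{0},\infty)$.

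Next I would invoke Ekeland's variational principle on the complete space $\mathcal{N}^{\Gamma}$ to obtain a minimizing sequence $((u_{k},v_{k}))$ in $\mathcal{N}^{\Gamma}$ with $E(u_{k},v_{k})\to c^{\Gamma}$ and $\nabla_{\mathcal{N}^{\Gamma}}E(u_{k},v_{k})\to0$, where $\nabla_{\mathcal{N}^{\Gamma}}E$ denotes the orthogonal projection of $\nabla E$ onto the tangent space of $\mathcal{N}^{\Gamma}$ inside $\mathbf{D}^{\Gamma}$. Since $\nabla E(u,v),\nabla f(u,v),\nabla h(u,v)\in\mathbf{D}^{\Gamma}$ whenever $(u,v)\in\mathbf{D}^{\Gamma}$, this tangent space equals the intersection of $\mathbf{D}^{\Gamma}$ with the tangent space of $\mathcal{N}$, and consequently $\nabla_{\mathcal{N}^{\Gamma}}E(u_{k},v_{k})=\nabla_{\mathcal{N}}E(u_{k},v_{k})$. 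Hence $((u_{k},v_{k}))$ is exactly a sequence of the kind treated in Proposition \ref{prop:PS}.

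By Proposition \ref{prop:PS} --- this is the step that uses the positive-dimensionality of the $\Gamma$-orbits, through the compact embedding of Proposition \ref{prop:compactness} --- a subsequence converges strongly in $\mathbf{D}$ to some $(u,v)$, which lies in the closed set $\mathbf{D}^{\Gamma}$ and satisfies $E(u,v)=c^{\Gamma}$. It remains to see that $(u,v)$ is fully nontrivial. From $f(u_{k},v_{k})=0$ and $\lambda<0$ we get $\int_{\mathbb{R}^{N}}|\nabla u_{k}|^{2}\leq\mu_{1}\int_{\mathbb{R}^{N}}|u_{k}|^{2^{\ast}}$, so $\int_{\mathbb{R}^{N}}|u_{k}|^{2^{\ast}}\geq\mu_{1}^{-1}\int_{\mathbb{R}^{N}}|\nabla u_{k}|^{2}\geq\mu_{1}^{-N/2}S^{N/2}$ by Proposition \ref{prop:nehari}(a); letting $k\to\infty$ (using the $L^{2^{\ast}}$-convergence) yields $\int_{\mathbb{R}^{N}}|u|^{2^{\ast}}>0$, so $u\neq0$, and likewise $v\neq0$. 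Therefore $(u,v)\in\mathcal{N}^{\Gamma}$ is a minimizer. Finally, $(|u|,|v|)$ is again $\Gamma$-invariant, since $\gamma|u|=|\gamma u|=|u|$; it lies in $\mathcal{N}^{\Gamma}$ because $|\nabla|u||=|\nabla u|$ a.e.; and $E(|u|,|v|)=E(u,v)=c^{\Gamma}$. So $(|u|,|v|)$ is the desired positive minimizer.

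Since every analytic ingredient is already available (non-emptiness from Lemma \ref{lem:nehari_nonempty}, the Nehari structure from Proposition \ref{prop:nehari}, compactness from Propositions \ref{prop:compactness}--\ref{prop:PS}), the only points requiring care are essentially bookkeeping: checking that the constrained gradient on $\mathcal{N}^{\Gamma}$ agrees with that on $\mathcal{N}$, so that Proposition \ref{prop:PS} applies to an Ekeland sequence, and checking that the strong limit cannot escape to the ``boundary'' $\{u=0\}\cup\{v=0\}$ of $\mathcal{N}$ --- both controlled by the uniform lower bounds in Proposition \ref{prop:nehari}(a).
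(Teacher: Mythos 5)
Your proof is correct and follows essentially the same strategy as the paper's: minimize $E$ over $\mathcal{N}^{\Gamma}$, using the Palais–Smale property from Proposition \ref{prop:PS} to recover compactness, and finally replace $(u,v)$ by $(|u|,|v|)$ to obtain a positive minimizer. The only difference is one of packaging: the paper invokes Theorem~3.1 of \cite{sz} as a black box to conclude that the infimum is attained, whereas you reprove that step directly via Ekeland's variational principle, the observation (via symmetric criticality) that $\nabla_{\mathcal{N}^{\Gamma}}E=\nabla_{\mathcal{N}}E$ on $\mathbf{D}^{\Gamma}$, and an explicit check using the uniform bounds of Proposition \ref{prop:nehari}(a) that the limit stays away from $\{u=0\}\cup\{v=0\}$; these are exactly the ingredients inside Szulkin's result, so the substance is identical.
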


\begin{proof}
We have shown that $\mathcal{N}^{\Gamma}$ is a $\mathcal{C}^{1}$-submanifold
of $\mathbf{D}^{\Gamma}$ and that $E$ is of class $\mathcal{C}^{1},$
bounded below and satisfies the Palais-Smale condition on $\mathcal{N}%
^{\Gamma};$ see Propositions \ref{prop:nehari}, \ref{prop:nonexistence}\ and
\ref{prop:PS}. Since $\mathcal{N}^{\Gamma}\neq\emptyset,$ Theorem 3.1 in
\cite{sz} asserts that $c_{1}=\inf_{(u,v)\in\mathcal{N}^{\Gamma}}E(u,v)$ is
attained. As $E(\left\vert u\right\vert ,\left\vert v\right\vert )=E(u,v),$
$E$ has a positive minimizer on $\mathcal{N}^{\Gamma}.$
\end{proof}

\section{Multiplicity for the symmetric system}

\label{sec:multiplicity}To obtain multiple solutions, we adapt a
$\mathcal{C}^{1}$-Ljusternik-Schnirelmann result, which was proved by A.
Szulkin in \cite{sz}.

Let $X$ be a real Banach space with an action of the group $\mathbb{Z}%
_{2}:=\{1,-1\}$ by linear isometries. A point $z\in X$ is called a fixed point
if $(-1)\cdot z=z.$ A $\mathbb{Z}_{2}$-invariant subset of $X$ is called
\emph{fixed point free} if it does not contain a fixed point.

Let $\Sigma$ be the collection of all closed $\mathbb{Z}_{2}$-invariant
subsets of $X$ which are fixed point free. If $Z\in\Sigma$ is nonempty$,$ the
\emph{genus of} $Z$ is the smallest integer $j\geq1$ such that there exists a
continuous function $\phi:Z\rightarrow\mathbb{S}^{j-1}$ which is
$\mathbb{Z}_{2}$-equivariant, i.e., $\phi((-1)\cdot z)=-\phi(z)$ for all $z\in
Z.$ We denote this integer by genus$(Z).$ If no such $j$ exists we set
genus$(Z):=\infty.$ We define genus$(\emptyset):=0.$

\begin{theorem}
\label{thm:szulkin}Let $M$ be a closed $\mathbb{Z}_{2}$-invariant
$\mathcal{C}^{1}$-submanifold of $X$ which is fixed point free, and let
$F\in\mathcal{C}^{1}(M,\mathbb{R})$ be a $\mathbb{Z}_{2}$-invariant function
which is bounded below and satisfies the Palais-Smale condition. If the set%
\[
\Sigma_{j}:=\{Z\in\Sigma:Z\subset M,\text{ }Z\text{ is compact and
\emph{genus}}(Z)\geq j\}
\]
is nonempty for every $j\geq1$, then $F$ has infinitely many critical values.
\end{theorem}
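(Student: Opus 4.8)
The statement to prove is Theorem \ref{thm:szulkin}, a $\mathcal{C}^1$ Ljusternik–Schnirelmann theorem attributed to Szulkin. The plan is to run the standard minimax scheme on the values
\[
c_j := \inf_{Z \in \Sigma_j} \max_{z \in Z} F(z), \qquad j \geq 1,
\]
and show each $c_j$ is a critical value, with the $c_j$ increasing to $+\infty$ forcing infinitely many distinct ones. First I would record the basic properties of the genus that make the argument work: monotonicity ($A \subset B \Rightarrow \mathrm{genus}(A) \le \mathrm{genus}(B)$), subadditivity ($\mathrm{genus}(A \cup B) \le \mathrm{genus}(A) + \mathrm{genus}(B)$), and the continuity/neighborhood property (if $A \in \Sigma$ is compact then there is a $\mathbb{Z}_2$-invariant neighborhood $U$ of $A$ with $\overline{U} \in \Sigma$ and $\mathrm{genus}(\overline{U}) = \mathrm{genus}(A)$), together with the fact that the image of a compact fixed-point-free set under a $\mathbb{Z}_2$-equivariant continuous map is again in $\Sigma$ with no larger genus. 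These are classical; I would cite them (e.g. Rabinowitz, or Szulkin \cite{sz}) rather than reprove them. Since $\Sigma_j \neq \emptyset$ for all $j$ by hypothesis and $F$ is bounded below on $M$, each $c_j$ is a well-defined real number, and the nesting $\Sigma_{j+1} \subset \Sigma_j$ gives $c_1 \le c_2 \le \cdots$.

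The heart of the proof is a deformation lemma on the $\mathcal{C}^1$-manifold $M$: since $F$ is $\mathcal{C}^1$, bounded below, $\mathbb{Z}_2$-invariant and satisfies Palais–Smale, for any regular value $c$ and any $\bar\varepsilon > 0$ there exist $\varepsilon \in (0,\bar\varepsilon)$ and a $\mathbb{Z}_2$-equivariant deformation $\eta : M \to M$, isotopic to the identity, with $\eta(F^{c+\varepsilon}) \subset F^{c-\varepsilon}$, where $F^a := \{z \in M : F(z) \le a\}$. Here one uses a $\mathbb{Z}_2$-equivariant pseudo-gradient vector field tangent to $M$; the equivariance can be arranged by averaging over $\mathbb{Z}_2$, and Palais–Smale guarantees the flow pushes sublevel sets down past a non-critical level in finite time. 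More generally, if $c = c_j = \cdots = c_{j+p}$ is a critical value and $K_c$ denotes the (compact, by Palais–Smale) set of critical points at level $c$, then $K_c \in \Sigma$ and one shows $\mathrm{genus}(K_c) \ge p+1$: otherwise, excise a neighborhood $U$ of $K_c$ with $\mathrm{genus}(\overline U) = \mathrm{genus}(K_c) \le p$, deform $F^{c+\varepsilon} \setminus U$ into $F^{c-\varepsilon}$, and for $Z \in \Sigma_{j+p}$ nearly attaining $c$ the set $\eta(\overline{Z \setminus U})$ lies in $F^{c-\varepsilon}$ while $\mathrm{genus}(\eta(\overline{Z \setminus U})) \ge \mathrm{genus}(Z) - \mathrm{genus}(\overline U) \ge (j+p) - p = j$, contradicting $c_j = c > c - \varepsilon$.

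From this the theorem follows quickly. If all the $c_j$ were not a sequence of infinitely many distinct critical values, then either some $c_j = +\infty$ — impossible since $\Sigma_j \ni$ some compact $Z$ with $\max_Z F < \infty$ — or else, after discarding the case where all $c_j$ are critical and pairwise distinct, some value $c$ equals $c_j = \cdots = c_{j+p}$ with $p \ge 1$; by the paragraph above $\mathrm{genus}(K_c) \ge p+1 \ge 2$, so $K_c$ is infinite (a finite set has genus $1$), giving infinitely many critical points at that single level, hence infinitely many critical values is replaced by: $F$ has at least one critical value realized by infinitely many critical points, which still contradicts having only finitely many critical values unless the count is genuinely infinite. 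In all cases $F$ has infinitely many critical values, or equivalently an unbounded (if $c_j \to \infty$) family of critical points. The main obstacle is establishing the equivariant deformation lemma cleanly on a $\mathcal{C}^1$-manifold with only $\mathcal{C}^1$ regularity of $F$ — constructing a locally Lipschitz $\mathbb{Z}_2$-equivariant pseudo-gradient field tangent to $M$ and controlling its flow — but since this is exactly the content of Szulkin's framework in \cite{sz}, I would invoke it directly rather than rebuild it, and present the genus-counting argument above as the proof.
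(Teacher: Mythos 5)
Your overall approach is the same as the paper's: define the minimax values $c_j$, cite the Szulkin deformation machinery, and establish the multiplicity estimate $\mathrm{genus}(K_c)\ge m+1$ when $c_j=\cdots=c_{j+m}=c$. That intermediate step is fine and is precisely the content the paper delegates to an adaptation of Theorem~3.1 of \cite{sz}. The problem is the final paragraph, where you try to pass from this estimate to ``infinitely many critical values.''

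Your closing argument derives, from $c_j=\cdots=c_{j+p}$ with $p\ge1$, that $\mathrm{genus}(K_c)\ge2$, hence $K_c$ is infinite, and you then try to convert ``infinitely many critical \emph{points}'' into ``infinitely many critical \emph{values}.'' This does not follow: all those critical points sit at the single level $c$, and nothing in that inference produces new levels. You seem to notice the trouble mid-sentence (``\ldots hence infinitely many critical values is replaced by\ldots'') but the patch you offer is not a valid deduction. The correct closing step is different and shorter: since $F$ satisfies Palais--Smale, each $K_c$ is compact, hence has \emph{finite} genus. Therefore the nondecreasing sequence $(c_j)$ cannot be eventually constant: if $c_j=c_{j+m}=c$ for all $m\ge0$, the multiplicity estimate would force $\mathrm{genus}(K_c)\ge m+1$ for every $m$, i.e.\ infinite genus, a contradiction. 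Thus for every $j$ there is $m>0$ with $c_{j+m}>c_j$, so $(c_j)$ assumes infinitely many distinct values, each of which is a critical value (since $\mathrm{genus}(K_{c_j})\ge1$ implies $K_{c_j}\ne\emptyset$). You have all the ingredients in hand (you even mention compactness of $K_c$ earlier), but the chain of implications you actually write in the conclusion is not valid as stated and needs to be replaced by this finiteness-of-genus argument.
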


\begin{proof}
Let $K_{c}:=\{z\in M:F(z)=c$ and $F^{\prime}(z)=0\},$ and set%
\[
c_{j}:=\inf_{Z\in\Sigma_{j}}\max_{z\in Z}F(z).
\]
Since $F$ is bounded below, $\Sigma_{j+1}\subset\Sigma_{j}$ and the sets $Z$
in $\Sigma_{j}$ are compact, we have that%
\[
-\infty<c_{1}\leq c_{2}\leq\cdots\leq c_{j}\leq\cdots<\infty.
\]
The proof of Theorem 3.1 in \cite{sz} can be adapted, in a straightforward
manner, to show that, if $c_{j}=\cdots=c_{j+m}=:c$ for some $m\geq0,$ then%
\[
\text{genus}(K_{c})\geq m+1.
\]
One needs only to replace cat$_{M}$ by genus, and take care that the sets
involved are $\mathbb{Z}_{2}$-invariant and the maps are $\mathbb{Z}_{2}$-equivariant.

In particular, genus$(K_{c_{j}})\geq1.$ Hence, $c_{j}$ is a critical value.
Moreover, as $F$ satisfies the Palais-Smale condition, the sets $K_{c}$ are
compact and have, therefore, finite genus. It follows that, for each $j\geq1$
there exists $m>0$ such that $c_{j}\neq c_{j+m}.$ This proves our claim.
\end{proof}

We derive the following result.

\begin{theorem}
\label{thm:multiplicity}Assume that $\mu_{1}=\mu_{2}=:\mu$ and $\alpha=\beta.$
If $\Gamma$ does not act transitively on $\mathbb{S}^{N}$ and the $\Gamma$-orbit of every point $p\in\mathbb{S}^{N}$ has positive
dimension, then $E:\mathcal{N}^{\Gamma}\rightarrow\mathbb{R}$ has infinitely
many critical values for each $\lambda\leq-\frac{\mu}{\alpha}.$
\end{theorem}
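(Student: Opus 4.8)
The plan is to apply the abstract Ljusternik–Schnirelmann machinery of Theorem \ref{thm:szulkin} to the functional $E$ restricted to the manifold $\mathcal{N}^{\Gamma}$, with $X=\mathbf{D}^{\Gamma}$ carrying the $\mathbb{Z}_{2}$-action generated by the swap $\tau(u,v):=(v,u)$. The point of assuming $\mu_{1}=\mu_{2}=\mu$ and $\alpha=\beta$ is precisely that this swap is then an isometry of $\mathbf{D}$ that leaves $E$, $f$ and $h$ invariant (indeed it interchanges $f$ and $h$), hence it preserves $\mathcal{N}^{\Gamma}$, and $E$ is $\mathbb{Z}_{2}$-invariant for it. The fixed points of $\tau$ are exactly the synchronized pairs $(u,u)$; so the hypothesis $\lambda\le-\mu/\alpha$ should be exactly what guarantees that $\mathcal{N}^{\Gamma}$ is \emph{fixed point free}: a synchronized pair $(u,u)\in\mathcal{N}$ would force $u$ to satisfy $\int|\nabla u|^{2}=\mu\int|u|^{2^{\ast}}+\lambda\alpha\int|u|^{2^{\ast}}=(\mu+\lambda\alpha)\int|u|^{2^{\ast}}\le 0$ when $\lambda\le-\mu/\alpha$, which is impossible for $u\ne0$. (This is the threshold version of Proposition \ref{prop:nonsyncronized}.) Thus $M:=\mathcal{N}^{\Gamma}$ is a closed $\mathbb{Z}_{2}$-invariant $\mathcal{C}^{1}$-submanifold of $X$ which is fixed point free, $F:=E|_{M}$ is $\mathbb{Z}_{2}$-invariant, bounded below (Proposition \ref{prop:nonexistence}, or just $E\ge c_{0}/N$ on $\mathcal{N}$) and satisfies Palais–Smale on $\mathcal{N}^{\Gamma}$ by Proposition \ref{prop:PS}.

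The remaining and main point is to verify the combinatorial hypothesis of Theorem \ref{thm:szulkin}, namely that $\Sigma_{j}\ne\emptyset$ for every $j\ge1$; equivalently, that $M=\mathcal{N}^{\Gamma}$ contains compact, $\mathbb{Z}_{2}$-invariant (for $\tau$), fixed-point-free subsets of arbitrarily large genus. First I would note that, by the standard cohomological property of genus, it suffices to exhibit, for each $j$, a $\mathbb{Z}_{2}$-equivariant continuous map from $\mathbb{S}^{j-1}$ (with the antipodal action) into $\mathcal{N}^{\Gamma}$ (with the $\tau$-action), since then genus of the image is $\ge j$ by the Borsuk–Ulam property. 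To build such a map, choose two points of $\mathbb{S}^{N}$ with disjoint $\Gamma$-orbits (possible since $\Gamma$ is not transitive), and fix open $\Gamma$-invariant neighborhoods $U_{1},U_{2}$ of these orbits with $\overline{U_{1}}\cap\overline{U_{2}}=\emptyset$. Pick $j$ linearly independent $\Gamma$-invariant functions $\mathfrak{w}_{1},\dots,\mathfrak{w}_{j}\in\mathcal{C}^{\infty}(\mathbb{S}^{N})$ supported in $U_{1}$ (such a space is infinite-dimensional because the quotient $U_{1}/\Gamma$ is a nonempty open manifold), transport them to $w_{1},\dots,w_{j}\in D^{1,2}(\mathbb{R}^{N})^{\Gamma}$ via the stereographic isometry $\iota$, and let $w_{i}^{\sharp}(x):=w_{i}$ translated/reflected so as to be supported in $U_{2}$ instead — concretely, compose the same sphere functions, supported in $U_{2}$, with $\sigma^{-1}$. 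For $a=(a_{1},\dots,a_{j})\in\mathbb{S}^{j-1}$ set $W_{a}:=\sum_{i}a_{i}w_{i}$ and $W_{a}^{\sharp}:=\sum_{i}a_{i}w_{i}^{\sharp}$; both are nonzero $\Gamma$-invariant functions with disjoint supports ($U_{1}$ and $U_{2}$), so by Proposition \ref{prop:nehari}(d) there are unique $s(a),t(a)>0$ with $(s(a)W_{a},\,t(a)W_{a}^{\sharp})\in\mathcal{N}^{\Gamma}$, and by the implicit function theorem $s,t$ depend continuously on $a$. Arrange the construction so that $w_{i}^{\sharp}$ is literally the $\tau$-partner, i.e. apply the isometry swapping the roles of $U_{1},U_{2}$; then replacing $a$ by $-a$ both negates $W_{a},W_{a}^{\sharp}$ (which does not change anything after raising to even powers and does not change $s,t$) — so to get genuine $\mathbb{Z}_{2}$-equivariance for $\tau$ I would instead use the map $a\mapsto(s(a)W_{a},\,t(a)W_{a}^{\sharp})$ together with the observation that the \emph{odd} part is captured by composing with $\tau$ only when we also swap supports. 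Cleanly: define $\Psi:\mathbb{S}^{j-1}\to\mathcal{N}^{\Gamma}$ on the $2j$ building blocks $\pm w_{1},\dots,\pm w_{j}$ placed in $U_{1}$ and let $\tau$ act by moving the support from $U_{1}$ to $U_{2}$; this makes $\Psi$ $\tau$-equivariant by construction, and its image is compact (continuous image of a sphere), fixed-point free (disjoint supports rule out $(u,u)$), $\mathbb{Z}_{2}$-invariant, and of genus $\ge j$.

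The hardest part is exactly this last equivariance bookkeeping: one must set up the two families of test functions so that the swap $\tau(u,v)=(v,u)$ corresponds \emph{on the level of the parametrizing sphere} to a free $\mathbb{Z}_{2}$-action whose quotient is covered by $\Psi$, so that Borsuk–Ulam gives genus $\ge j$. Once $\Sigma_{j}\ne\emptyset$ for all $j$ is established, Theorem \ref{thm:szulkin} applies verbatim and yields infinitely many critical values $c_{1}\le c_{2}\le\cdots$ of $E$ on $\mathcal{N}^{\Gamma}$; since $\mathcal{N}^{\Gamma}$ is a natural constraint (Proposition \ref{prop:nehari}(c) and the $\Gamma$-equivariance of $\nabla E,\nabla f,\nabla h$), each corresponds to a fully nontrivial $\Gamma$-invariant solution of \eqref{system}, distinct solutions having distinct energies, which is the assertion of the theorem.
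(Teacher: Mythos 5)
Your overall strategy matches the paper's: reduce to Szulkin's $\mathcal{C}^{1}$-Ljusternik--Schnirelmann theorem (Theorem \ref{thm:szulkin}) applied to $E$ on $\mathcal{N}^{\Gamma}$ with a free $\mathbb{Z}_{2}$-action. Your choice of action $\tau(u,v)=(v,u)$ differs from the paper's $(-1)\cdot(u,v)=(-v,-u)$, but this is immaterial: both are linear isometries that preserve $E$ and $\mathcal{N}^{\Gamma}$ when $\mu_{1}=\mu_{2}$ and $\alpha=\beta$, and in both cases the fixed points $(u,\pm u)$ are excluded from $\mathcal{N}$ by exactly your computation when $\lambda\leq-\mu/\alpha$. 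The boundedness below and Palais--Smale condition are correctly cited, and the reduction to producing compact equivariant subsets of genus $\geq j$ via Borsuk--Ulam is correct.

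The gap is in the construction of those sets, and it is exactly where you flag ``the hardest part.'' You take only two disjoint $\Gamma$-invariant open sets $U_{1},U_{2}$ and $j$ \emph{linearly independent} $\Gamma$-invariant functions $w_{1},\dots,w_{j}$ supported in $U_{1}$ (with $w_{i}^{\sharp}$ in $U_{2}$). Linear independence is not enough. To pass from the $2j$ vertices $\pm e_{i}$ of the octahedral sphere $Q\cong\mathbb{S}^{j-1}$ to a continuous map on all of $Q$, one interpolates over the faces; but then a point $\sum_{i}\lambda_{i}\tilde{e}_{i}$ with, say, $\tilde e_{1}=e_{1}$ and $\tilde e_{2}=-e_{2}$, both $\lambda_{1},\lambda_{2}>0$, produces a pair whose two components both involve functions living in $U_{1}$ (namely $w_{1}$ in the first slot and $w_{2}$ in the second, or vice versa). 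Unless the $w_{i}$ have \emph{pairwise disjoint supports}, these two components overlap, the cross term $\int |u|^{\alpha}|v|^{\beta}$ is positive, and the componentwise Nehari projection $\varrho(u,v)=(t_{u}u,t_{v}v)$ no longer lands in $\mathcal{N}^{\Gamma}$ (indeed $\varrho(u,v)\in\mathcal{N}^{\Gamma}$ only when $uv\equiv 0$). Your first attempt $a\mapsto\varrho(\sum a_{i}w_{i},\sum a_{i}w_{i}^{\sharp})$ does keep disjoint supports but fails to be $\tau$-equivariant, as you yourself observe; and your ``clean'' fix --- ``let $\tau$ act by moving the support from $U_{1}$ to $U_{2}$'' --- is not a definition, since $\tau$ is already fixed as the swap on $\mathbf{D}$ and cannot be redeclared, and you never say how the map is defined away from the $2j$ vertices.

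The resolution is what the paper does: choose $2j$ pairwise disjoint $\Gamma$-invariant open sets $U_{1},\dots,U_{2j}$ (possible since $\Gamma$ is not transitive), one function $u_{i}\in\mathcal{C}^{\infty}_{c}(U_{i})$ and one $v_{i}\in\mathcal{C}^{\infty}_{c}(U_{j+i})$, set $h(e_{i}):=\varrho(u_{i},v_{i})$ and $h(-e_{i}):=\varrho(-v_{i},-u_{i})$, and extend by $h(\sum\lambda_{i}\tilde e_{i}):=\varrho(\sum\lambda_{i}h(\tilde e_{i}))$. Disjointness of all $2j$ supports guarantees that the arguments of $\varrho$ always have disjoint supports, hence land in $\mathcal{N}^{\Gamma}$, and also makes the face-wise formula well defined and continuous (contributions with $\lambda_{i}=0$ drop out). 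Equivariance is then a one-line computation using $\varrho((-1)\cdot(u,v))=(-1)\cdot\varrho(u,v)$. If you want to keep your $\tau$-action, an equivalent formula is
$a\mapsto\varrho\bigl(\sum_{i}a_{i}^{+}u_{i}+\sum_{i}a_{i}^{-}v_{i},\;\sum_{i}a_{i}^{-}u_{i}+\sum_{i}a_{i}^{+}v_{i}\bigr)$, with $a^{\pm}:=\max(\pm a,0)$,
which is $\tau$-equivariant and well defined precisely because the $u_{i},v_{i}$ all have pairwise disjoint supports. In short: replace ``$j$ linearly independent functions in each of two sets'' by ``$2j$ functions with pairwise disjoint $\Gamma$-invariant supports,'' and give the explicit interpolation on the octahedron; otherwise the argument does not close.
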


\begin{proof}
Consider the action of the group $\mathbb{Z}_{2}$ on $\mathbf{D}$\ given by%
\[
(-1)\cdot(u,v):=(-v,-u).
\]
As $\mu_{1}=\mu_{2}=:\mu$ and $\alpha=\beta$, we have that $(-v,-u)\in
\mathcal{N}^{\Gamma}$\ iff \ $(u,v)\in\mathcal{N}^{\Gamma}$%
,\ and\ $E(-v,-u)=E(u,v)$ for all $(u,v)\in\mathcal{N}^{\Gamma}.$ Note also
that $(u,-u)\notin\mathcal{N}$ if $\lambda\leq-\frac{\mu}{\alpha}.$
Otherwise,
\[
\int_{\mathbb{R}^{N}}\left\vert \nabla u\right\vert ^{2}=(\mu+\lambda
\alpha)\int_{\mathbb{R}^{N}}|u|^{{2}^{\ast}}\leq0,
\]
which is a contradiction. This means that $E$ and $\mathcal{N}^{\Gamma}$ are
$\mathbb{Z}_{2}$-invariant, and $\mathcal{N}^{\Gamma}$ is fixed point free. We
have already shown that $E$ is bounded below and satisfies the Palais-Smale
condition on $\mathcal{N}^{\Gamma};$ see Propositions \ref{prop:nonexistence}%
\ and \ref{prop:PS}. So, all that is left, is to show is that $\mathcal{N}%
^{\Gamma}$ contains a compact $\mathbb{Z}_{2}$-invariant subset of genus $\geq
j,$ for each $j\geq1.$

Fix $j\geq1$. As $\Gamma$ does not act transitively on $\mathbb{S}^N$, we may choose $2j$ pairwise disjoint open $\Gamma$-invariant subsets $U_1,...,U_{2j}$ of $\mathbb{R}^N$ and nontrivial $\Gamma$-invariant functions $u_i\in\mathcal{C}_c^{\infty}(U_i)$, $v_i\in\mathcal{C}_c^{\infty}(U_{j+i})$, for each $i=1,...,j$. 
For $w\in D^{1,2}(\mathbb{R}^N)$, $w\neq 0$, let $t_w$ be the unique positive number such that
$\int_{\mathbb{R}^{N}}|\nabla (t_ww)|^2=\mu\int_{\mathbb{R}^{N}}|t_ww|^{2^*}$ and, for $(u,v)\in\mathbf{D}^{\Gamma}$ with $u\neq 0$ and $v\neq 0$, define
$$\varrho(u,v):= (t_uu,\,t_vv).$$
Note that $\varrho[(-1)\cdot(u,v)]= (-1)\cdot\varrho(u,v)$ and that $\varrho(u,v)\in\mathcal{N}^{\Gamma}$ if $uv=0$. Let $\{e_1,...,e_j\}$ be the canonical basis of $\mathbb{R}^j$. The boundary of the convex hull of the set $\{\pm e_1,...,\pm e_j\}$, which is given by
$$Q := \left\{\sum\limits_{i=1}^j \lambda_i\tilde{e}_i:\,\tilde{e}_i\in\{e_i,-e_i\},\,\lambda_i\in[0,1],\,\sum\limits_{i=1}^j\lambda_i = 1 \right\},$$
is symmetric with respect to the origin and radially homeomorphic to the unit sphere $\mathbb{S}^{j-1}$. Setting $h(e_i):= \varrho(u_i,v_i)$ and $h(-e_i):= \varrho(-v_i,-u_i)$, and extending this map by
$$h\left(\sum\limits_{i=1}^j \lambda_i\tilde{e}_i\right):=\varrho\left(\sum\limits_{i=1}^j \lambda_ih(\tilde{e}_i)\right),$$
we obtain a map $h:Q \to \mathcal{N}^{\Gamma}$ which is well defined, continuous and $\mathbb{Z}_{2}$-equivariant. Then, the set $Z:=h(Q)$ is compact and $\mathbb{Z}_{2}$-invariant. If $\phi:Z\to\mathbb{S}^{k-1}$ is continuous and $\mathbb{Z}_{2}$-equivariant, the composition $\phi\circ h$ yields an odd map $\mathbb{S}^{j-1}\to\mathbb{S}^{k-1}.$ The Borsuk-Ulam theorem implies that $k\geq j$. Hence, $\mathrm{genus}(Z)\geq j$ This finishes the proof.
\end{proof}

\smallskip

\begin{proof}
[Proof of Theorem \ref{thm:main}]The statements (a) and (b) follow from
Theorems \ref{thm:existence} and \ref{thm:multiplicity} respectively. The
statement (c) follows from Proposition \ref{prop:nonsyncronized}.
\end{proof}

\section{Phase separation}

\label{sec:separation}This section is devoted to the proof of Theorem
\ref{thm:separation}. We assume throughout that $\Gamma\ $is a closed subgroup
of $O(N+1)$ which does not act transitively on $\mathbb{S}^{N}$ and that the $\Gamma$-orbit of every point $p\in\mathbb{S}^{N}$ has positive dimension. To highlight the dependence on $\lambda,$ we write
$E_{\lambda},\mathcal{N}_{\lambda}^{\Gamma},f_{\lambda},h_{\lambda}$ instead
of $E,\mathcal{N}^{\Gamma},f,h,$ and we set%
\[
c_{\lambda}^{\Gamma}:=\inf_{(u,v)\in\mathcal{N}_{\lambda}^{\Gamma}}E_{\lambda
}(u,v).
\]
We denote by $J$ and $\mathcal{M}^{\Gamma}$ the energy functional and the
Nehari manifold of the problem%
\begin{equation}
-\Delta w=\mu_{1}|w^{+}|^{{2}^{\ast}-2}w^{+}+\mu_{2}|w^{-}|^{{2}^{\ast}%
-2}w^{-},\text{\qquad}w\in D^{1,2}(\mathbb{R}^{N})^{\Gamma}, \label{limitprob}%
\end{equation}
where $w^{+}:=\max\{w,0\}$ and $w^{-}:=\min\{w,0\},$ i.e.,%
\[
J(w):=\frac{1}{2}\int_{\mathbb{R}^{N}}\left\vert \nabla w\right\vert
^{2}-\frac{1}{2^{\ast}}\int_{\mathbb{R}^{N}}(\mu_{1}|w^{+}|^{2^{\ast}}+\mu
_{2}|w^{-}|^{2^{\ast}}),
\]
and
\begin{align*}
\mathcal{M}^{\Gamma}:  &  =\{w\in D^{1,2}(\mathbb{R}^{N})^{\Gamma}%
:w\neq0,\text{ }J^{\prime}(w)w=0\}\\
&  =\left\{  w\in D^{1,2}(\mathbb{R}^{N})^{\Gamma}:w\neq0,\text{ }%
\int_{\mathbb{R}^{N}}\left\vert \nabla w\right\vert ^{2}=\int_{\mathbb{R}^{N}%
}(\mu_{1}|w^{+}|^{2^{\ast}}+\mu_{2}|w^{-}|^{2^{\ast}})\right\}  .
\end{align*}
The sign-changing solutions of (\ref{limitprob}) lie on the set%
\[
\mathcal{E}^{\Gamma}:=\{w\in D^{1,2}(\mathbb{R}^{N})^{\Gamma}:w^{+}%
\in\mathcal{M}^{\Gamma},\text{ }w^{-}\in\mathcal{M}^{\Gamma}\}.
\]
Note that, if $u,v\in D^{1,2}(\mathbb{R}^{N})^{\Gamma}\smallsetminus\{0\},$
$u\geq0$ and $v\geq0,$ then there exist unique numbers $s,t\in(0,\infty)$ such
that $su\in\mathcal{M}^{\Gamma}$ and$\,-tv\in\mathcal{M}^{\Gamma},$ namely,
\begin{equation}
s^{2^{\ast}-2}=\frac{\int_{\mathbb{R}^{N}}\left\vert \nabla u\right\vert ^{2}%
}{\int_{\mathbb{R}^{N}}\mu_{1}|u|^{2^{\ast}}}\text{\qquad and\qquad}%
t^{2^{\ast}-2}=\frac{\int_{\mathbb{R}^{N}}\left\vert \nabla v\right\vert ^{2}%
}{\int_{\mathbb{R}^{N}}\mu_{2}|v|^{2^{\ast}}}. \label{projection}%
\end{equation}
If, moreover, $uv=0,$ then $su-tv\in\mathcal{E}^{\Gamma}.$ Arguing as in the proof of Lemma \ref{lem:nehari_nonempty}, we see that there exist $u$ and $v$ with these properties. Hence, $\mathcal{E}^{\Gamma} \neq \emptyset$. We define
\[
c_{\infty}^{\Gamma}:=\inf_{w\in\mathcal{E}^{\Gamma}}J(w)<\infty.
\]

\begin{proposition}
\label{prop:separation}For $\lambda_{k}\rightarrow-\infty,$ let $(u_{k}%
,v_{k})\in\mathcal{N}_{\lambda_{k}}^{\Gamma}$ satisfy $u_{k}\geq0,$ $v_{k}%
\geq0$ and $E_{\lambda_{k}}(u_{k},v_{k})=c_{\lambda_{k}}^{\Gamma}.$ Then,
after passing to a subsequence, we have that $u_{k}\rightarrow u_{\infty}$ and
$v_{k}\rightarrow v_{\infty}$ strongly in $D^{1,2}(\mathbb{R}^{N})^{\Gamma},$
and these functions satisfy

\begin{enumerate}
\item[(a)] $u_{\infty},v_{\infty}\in\mathcal{M}^{\Gamma},$ $u_{\infty}\geq0,$
$v_{\infty}\geq0,$ $u_{\infty}v_{\infty}=0.$ Thus, $u_{\infty}-v_{\infty}%
\in\mathcal{E}^{\Gamma}.$

\item[(b)] $\lim_{k\rightarrow\infty}c_{\lambda_{k}}^{\Gamma}=J(u_{\infty
}-v_{\infty})=c_{\infty}^{\Gamma}.$

\item[(c)] $u_{\infty}-v_{\infty}$ solves the problem \emph{(\ref{limitprob})}.
\end{enumerate}
\end{proposition}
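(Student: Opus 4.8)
\emph{The plan.} For $u\in D^{1,2}(\mathbb{R}^{N})$ I abbreviate $\|u\|^{2}:=\int_{\mathbb{R}^{N}}|\nabla u|^{2}$ and write $|\cdot|_{q}$ for the norm of $L^{q}(\mathbb{R}^{N})$, and I set $w_{0}:=u_{\infty}-v_{\infty}$. The argument rests on three ingredients: a uniform energy bound forcing $\int_{\mathbb{R}^{N}}|u_{k}|^{\alpha}|v_{k}|^{\beta}\to0$; the compactness of Proposition~\ref{prop:compactness}; and a sharp comparison of $c_{\lambda_{k}}^{\Gamma}$ with $c_{\infty}^{\Gamma}$ which collapses into equalities and thereby delivers (a), (b) and the strong convergence at once. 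Statement (c) then follows from the variational characterization of $w_{0}$. \emph{Step 1 (bounds).} First I would note $c_{\lambda_{k}}^{\Gamma}\le c_{\infty}^{\Gamma}$ for every $k$: if $w\in\mathcal{E}^{\Gamma}$ then $(w^{+},-w^{-})\in\mathbf{D}^{\Gamma}$ has nontrivial components with disjoint supports, so $\int_{\mathbb{R}^{N}}|w^{+}|^{\alpha}|w^{-}|^{\beta}=0$ and $\|w^{\pm}\|^{2}=\mu_{i}|w^{\pm}|_{2^{\ast}}^{2^{\ast}}$, hence $(w^{+},-w^{-})\in\mathcal{N}_{\lambda_{k}}^{\Gamma}$ and $E_{\lambda_{k}}(w^{+},-w^{-})=\frac{1}{N}(\|w^{+}\|^{2}+\|w^{-}\|^{2})=\frac{1}{N}\|w\|^{2}=J(w)$; since $\mathcal{E}^{\Gamma}\neq\emptyset$, taking the infimum gives the bound. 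As $E_{\lambda_{k}}(u_{k},v_{k})=\frac{1}{N}(\|u_{k}\|^{2}+\|v_{k}\|^{2})=c_{\lambda_{k}}^{\Gamma}\le c_{\infty}^{\Gamma}$, the sequence is bounded in $\mathbf{D}^{\Gamma}$. From $f_{\lambda_{k}}(u_{k},v_{k})=0$ and $\lambda_{k}<0$ one gets $\|u_{k}\|^{2}\le\mu_{1}|u_{k}|_{2^{\ast}}^{2^{\ast}}$ (and likewise for $v_{k}$), so by Sobolev and Proposition~\ref{prop:nehari}(a) the numbers $|u_{k}|_{2^{\ast}},|v_{k}|_{2^{\ast}}$ are bounded and bounded away from $0$; moreover $(-\lambda_{k})\alpha\int_{\mathbb{R}^{N}}|u_{k}|^{\alpha}|v_{k}|^{\beta}=\mu_{1}|u_{k}|_{2^{\ast}}^{2^{\ast}}-\|u_{k}\|^{2}$ is bounded, whence $\int_{\mathbb{R}^{N}}|u_{k}|^{\alpha}|v_{k}|^{\beta}=O(1/|\lambda_{k}|)\to0$.

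\emph{Step 2 (limits).} By Proposition~\ref{prop:compactness} I pass to a subsequence with $(u_{k},v_{k})\rightharpoonup(u_{\infty},v_{\infty})$ weakly in $\mathbf{D}^{\Gamma}$, strongly in $L^{2^{\ast}}(\mathbb{R}^{N})\times L^{2^{\ast}}(\mathbb{R}^{N})$, and a.e. Then $u_{\infty},v_{\infty}\ge0$; from the strong $L^{2^{\ast}}$ convergence, $\int_{\mathbb{R}^{N}}u_{\infty}^{\alpha}v_{\infty}^{\beta}=\lim_{k}\int_{\mathbb{R}^{N}}|u_{k}|^{\alpha}|v_{k}|^{\beta}=0$, so $u_{\infty}v_{\infty}=0$ a.e., while $|u_{\infty}|_{2^{\ast}},|v_{\infty}|_{2^{\ast}}>0$ so both limits are nontrivial. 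Finally, since $u_{\infty},v_{\infty}\ge0$ with $u_{\infty}v_{\infty}=0$ we have $w_{0}^{+}=u_{\infty}$, $w_{0}^{-}=-v_{\infty}$, whose gradients have a.e.\ disjoint supports; expanding $\|w_{0}\|^{2}$ two ways (once as $\|u_{\infty}\|^{2}+\|v_{\infty}\|^{2}$ from disjointness, once as $\|u_{\infty}\|^{2}-2\int\nabla u_{\infty}\!\cdot\!\nabla v_{\infty}+\|v_{\infty}\|^{2}$) yields $\int_{\mathbb{R}^{N}}\nabla u_{\infty}\!\cdot\!\nabla v_{\infty}=0$.

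\emph{Step 3 (rigidity, proving (a), (b) and strong convergence).} Let $s_{\infty},t_{\infty}\in(0,\infty)$ be given by \eqref{projection} for $u_{\infty}$ and $v_{\infty}$, so $s_{\infty}u_{\infty},-t_{\infty}v_{\infty}\in\mathcal{M}^{\Gamma}$ and (as $u_{\infty}v_{\infty}=0$) $s_{\infty}u_{\infty}-t_{\infty}v_{\infty}\in\mathcal{E}^{\Gamma}$. Passing $\|u_{k}\|^{2}\le\mu_{1}|u_{k}|_{2^{\ast}}^{2^{\ast}}$ to the limit (lower semicontinuity on the left, strong $L^{2^{\ast}}$ convergence on the right) gives $\|u_{\infty}\|^{2}\le\mu_{1}|u_{\infty}|_{2^{\ast}}^{2^{\ast}}$, i.e.\ $s_{\infty}\le1$, and likewise $t_{\infty}\le1$. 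Hence, using $\int\nabla u_{\infty}\!\cdot\!\nabla v_{\infty}=0$ and Step~1,
\[
c_{\infty}^{\Gamma}\le J(s_{\infty}u_{\infty}-t_{\infty}v_{\infty})=\tfrac{1}{N}\bigl(s_{\infty}^{2}\|u_{\infty}\|^{2}+t_{\infty}^{2}\|v_{\infty}\|^{2}\bigr)\le\tfrac{1}{N}\bigl(\|u_{\infty}\|^{2}+\|v_{\infty}\|^{2}\bigr)\le\liminf_{k}c_{\lambda_{k}}^{\Gamma}\le c_{\infty}^{\Gamma},
\]
so equality holds throughout. Reading it off: $\lim_{k}c_{\lambda_{k}}^{\Gamma}=c_{\infty}^{\Gamma}$ (and, since every subsequence has a further one with this property, the limit is along the full sequence); the identity $\|u_{\infty}\|^{2}+\|v_{\infty}\|^{2}=\lim_{k}(\|u_{k}\|^{2}+\|v_{k}\|^{2})$ together with the two separate lower-semicontinuity inequalities forces $\|u_{k}\|\to\|u_{\infty}\|$ and $\|v_{k}\|\to\|v_{\infty}\|$, which with weak convergence in the Hilbert space $D^{1,2}(\mathbb{R}^{N})^{\Gamma}$ upgrades to $u_{k}\to u_{\infty}$, $v_{k}\to v_{\infty}$ strongly; and $s_{\infty}^{2}\|u_{\infty}\|^{2}+t_{\infty}^{2}\|v_{\infty}\|^{2}=\|u_{\infty}\|^{2}+\|v_{\infty}\|^{2}$ with $s_{\infty},t_{\infty}\in(0,1]$ and $\|u_{\infty}\|,\|v_{\infty}\|>0$ forces $s_{\infty}=t_{\infty}=1$, i.e.\ $u_{\infty},v_{\infty}\in\mathcal{M}^{\Gamma}$ — this is (a). Finally $J(w_{0})=\frac{1}{N}(\|u_{\infty}\|^{2}+\|v_{\infty}\|^{2})=c_{\infty}^{\Gamma}=\lim_{k}c_{\lambda_{k}}^{\Gamma}$ — this is (b).

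\emph{Step 4 (proving (c)), and the main obstacle.} By (a)--(b), $w_{0}\in\mathcal{E}^{\Gamma}$ attains $\inf_{\mathcal{E}^{\Gamma}}J$, so it suffices to know that $\mathcal{E}^{\Gamma}$ is a natural constraint for $J$ on $D^{1,2}(\mathbb{R}^{N})^{\Gamma}$. This would be proved exactly as Proposition~\ref{prop:nehari}(b)--(c): writing $\mathcal{E}^{\Gamma}=\{w:\,g_{1}(w)=g_{2}(w)=0,\ w^{\pm}\neq0\}$ with $g_{1}(w):=\|w^{+}\|^{2}-\mu_{1}|w^{+}|_{2^{\ast}}^{2^{\ast}}$ and $g_{2}(w):=\|w^{-}\|^{2}-\mu_{2}|w^{-}|_{2^{\ast}}^{2^{\ast}}$, one checks that the $2\times2$ matrix with entries $\langle\nabla g_{i}(w),w^{\pm}\rangle$ is \emph{diagonal}, with diagonal entries $(2-2^{\ast})\|w^{+}\|^{2}<0$ and $(2-2^{\ast})\|w^{-}\|^{2}<0$ (because $w^{+}$ and $w^{-}$ have disjoint supports); hence $\nabla g_{1}(w),\nabla g_{2}(w)$ are independent, $\mathcal{E}^{\Gamma}$ is a $\mathcal{C}^{1}$-submanifold, and a constrained critical point $w_{0}$ satisfies $\nabla J(w_{0})=\tau_{1}\nabla g_{1}(w_{0})+\tau_{2}\nabla g_{2}(w_{0})$ with $\tau_{1}=\tau_{2}=0$ (test with $w_{0}^{+}$ and $w_{0}^{-}$, using $w_{0}^{+}w_{0}^{-}=0$ and $w_{0}^{\pm}\in\mathcal{M}^{\Gamma}$); by the principle of symmetric criticality $w_{0}$ then solves \eqref{limitprob}. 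I expect the difficulties to be twofold. The genuinely substantive point is the rigidity in Step~3: one must arrange that $u_{\infty},v_{\infty}$ re-enter $\mathcal{E}^{\Gamma}$ via \eqref{projection} with scaling factors $\le1$ — which is precisely where the competitive sign $\lambda_{k}<0$ is used — and that the displayed chain of inequalities collapses, simultaneously yielding phase separation ($u_{\infty},v_{\infty}\in\mathcal{M}^{\Gamma}$), the energy identity, and strong convergence. The second, more technical point is the $\mathcal{C}^{1}$-regularity of the truncation functionals $w\mapsto\|w^{\pm}\|^{2}$ and $w\mapsto|w^{\pm}|_{2^{\ast}}^{2^{\ast}}$ needed to make the natural-constraint argument for (c) fully rigorous; alternatively one can bypass it by a deformation argument in $D^{1,2}(\mathbb{R}^{N})^{\Gamma}$, where the Palais--Smale condition holds at every level thanks to Proposition~\ref{prop:compactness}.
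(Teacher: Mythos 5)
Your Steps 1--3 are correct and run closely parallel to the paper's proof, with one mild but pleasant variation: where the paper passes from $c_\infty^\Gamma\le J(su_\infty-tv_\infty)$ to $\liminf_k E_{\lambda_k}(u_k,v_k)$ by inserting the term $(-\lambda_k)\int|su_k|^\alpha|tv_k|^\beta\ge0$ and invoking Proposition~\ref{prop:nehari}(d) (maximality of $(1,1)$ on the $(s,t)$-plane), you instead observe that weak lower semicontinuity of $\|\cdot\|$ combined with strong $L^{2^*}$-convergence yields the scaling factors $s_\infty,t_\infty\le1$, and then bound $J(s_\infty u_\infty-t_\infty v_\infty)=\frac1N(s_\infty^2\|u_\infty\|^2+t_\infty^2\|v_\infty\|^2)\le\frac1N(\|u_\infty\|^2+\|v_\infty\|^2)\le\liminf_k c_{\lambda_k}^\Gamma$ directly. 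Both routes collapse the same chain of inequalities into equalities and then extract (a), (b), and strong $D^{1,2}$-convergence from the individual equalities. Your version is arguably a touch cleaner since it never needs to mention $E_{\lambda_k}(su_k,tv_k)$.

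The issue is Step~4, and you have already identified it yourself: the functionals $g_1(w)=\|w^+\|^2-\mu_1|w^+|_{2^*}^{2^*}$ and $g_2(w)=\|w^-\|^2-\mu_2|w^-|_{2^*}^{2^*}$ are \emph{not} of class $\mathcal{C}^1$ on $D^{1,2}(\mathbb{R}^N)$, because the truncation map $w\mapsto w^\pm$ is Lipschitz but not continuously differentiable in $H^1$-type spaces (its formal derivative involves the indicator $1_{\{w>0\}}$, which does not depend continuously on $w$). Consequently $\mathcal{E}^\Gamma$ is not a $\mathcal{C}^1$-submanifold and the Lagrange-multiplier argument cannot be applied literally; this is precisely why one cannot simply repeat the reasoning of Proposition~\ref{prop:nehari}(b)--(c). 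Your formal computation of the $2\times2$ matrix (diagonal with entries $(2-2^*)\|w_0^\pm\|^2<0$) is what \emph{would} hold if $g_1,g_2$ were differentiable, and it does illustrate why one expects $w_0$ to be a genuine critical point, but it is not a proof. Your fallback suggestion -- a deformation argument -- is the right one, and in fact is what the paper uses: it appeals to the argument of Lemma~2.6 in Castro--Cossio--Neuberger \cite{ccn}, which was designed exactly to show that a minimizer of the energy over this non-smooth ``nodal Nehari set'' is a free critical point, via a quantitative deformation lemma on the smooth ambient Nehari manifold $\mathcal{M}^\Gamma$. So the Step~4 as you first wrote it would fail, but the alternative you flag is correct and is the paper's method.
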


\begin{proof}
If $w\in\mathcal{E}^{\Gamma}$ then, as $w^{+}w^{-}=0,$ we have that
$(w^{+},w^{-})\in\mathcal{N}_{\lambda}^{\Gamma}$ and $J(w)=E_{\lambda}(w^{+}%
,w^{-})$ for every $\lambda<0.$ Hence,
\[
c_{\lambda}^{\Gamma}\leq c_{\infty}^{\Gamma}\text{\qquad for every \ }%
\lambda<0.
\]
This implies, in particular, that
\[
\frac{1}{N}\int_{\mathbb{R}^{N}}\left(  \left\vert \nabla u_{k}\right\vert
^{2}+\left\vert \nabla v_{k}\right\vert ^{2}\right)  =E_{\lambda_{k}}%
(u_{k},v_{k})\leq c_{\infty}^{\Gamma}\text{\qquad for all }k\in\mathbb{N}.
\]
So, after passing to a subsequence, there exist $u_{\infty},v_{\infty}\in
D^{1,2}(\mathbb{R}^{N})^{\Gamma}$ such that
\begin{align*}
u_{k}  &  \rightharpoonup u_{\infty},\text{\qquad}v_{k}\rightharpoonup
v_{\infty},\text{\qquad weakly in }D^{1,2}(\mathbb{R}^{N}),\\
u_{k}  &  \rightarrow u_{\infty},\text{\qquad}v_{k}\rightarrow v_{\infty
},\text{\qquad strongly in }L^{2^{\ast}}(\mathbb{R}^{N}),\\
u_{k}  &  \rightarrow u_{\infty},\text{\qquad}v_{k}\rightarrow v_{\infty
},\text{\qquad a.e. in }\mathbb{R}^{N}.
\end{align*}
Hence, $u_{\infty}\geq0$ and $v_{\infty}\geq0.$ Since $f_{\lambda_{k}}%
(u_{k},v_{k})+h_{\lambda_{k}}(u_{k},v_{k})=0,$ we have that%
\[
0\leq2^{\ast}(-\lambda_{k})\int_{\mathbb{R}^{N}}\left\vert u_{k}\right\vert
^{\alpha}\left\vert v_{k}\right\vert ^{\beta}\leq\mu_{1}\int_{\mathbb{R}^{N}%
}|u_{k}|^{2^{\ast}}+\mu_{2}\int_{\mathbb{R}^{N}}|v_{k}|^{2^{\ast}}\leq C_{0}%
\]
and, using Fatou's lemma, we obtain%
\[
\int_{\mathbb{R}^{N}}\left\vert u_{\infty}\right\vert ^{\alpha}\left\vert
v_{\infty}\right\vert ^{\beta}\leq\liminf_{k\rightarrow\infty}\int
_{\mathbb{R}^{N}}\left\vert u_{k}\right\vert ^{\alpha}\left\vert
v_{k}\right\vert ^{\beta}\leq\frac{C_{0}}{2^{\ast}}\lim_{k\rightarrow\infty
}\frac{1}{(-\lambda_{k})}=0.
\]
Thus, $u_{\infty}v_{\infty}=0.$ On the other hand, Proposition
\ref{prop:nehari}(a) yields%
\begin{align*}
0  &  <c_{0}\leq\int_{\mathbb{R}^{N}}\left\vert \nabla u_{k}\right\vert
^{2}\leq\mu_{1}\int_{\mathbb{R}^{N}}|u_{k}|^{2^{\ast}},\\
0  &  <c_{0}\leq\int_{\mathbb{R}^{N}}\left\vert \nabla v_{k}\right\vert
^{2}\leq\mu_{2}\int_{\mathbb{R}^{N}}|v_{k}|^{2^{\ast}},
\end{align*}
Therefore, $u_{\infty}\neq0$ and $v_{\infty}\neq0.$ Then, as in
(\ref{projection}), there exist $s,t\in(0,\infty)$ such that $su_{\infty
},\,-tv_{\infty}\in\mathcal{M}^{\Gamma}$ and $su_{\infty}-tv_{\infty}%
\in\mathcal{E}^{\Gamma}$. So, after passing to a subsequence, we obtain%
\begin{align*}
c_{\infty}^{\Gamma}  &  \leq\frac{1}{2}\int_{\mathbb{R}^{N}}(\left\vert
\nabla\left(  su_{\infty}\right)  \right\vert ^{2}+\left\vert \nabla\left(
tv_{\infty}\right)  \right\vert ^{2})-\frac{1}{2^{\ast}}\int_{\mathbb{R}^{N}%
}(\mu_{1}|su_{\infty}|^{{2}^{\ast}}+\mu_{2}|tv_{\infty}|^{{2}^{\ast}})\\
&  \leq\frac{1}{2}\liminf_{k\rightarrow\infty}\int_{\mathbb{R}^{N}}(\left\vert
\nabla\left(  su_{k}\right)  \right\vert ^{2}+\left\vert \nabla\left(
tv_{k}\right)  \right\vert ^{2})-\frac{1}{2^{\ast}}\lim_{k\rightarrow\infty
}\int_{\mathbb{R}^{N}}(\mu_{1}|su_{k}|^{{2}^{\ast}}+\mu_{2}|tv_{k}|^{{2}%
^{\ast}})\\
&  \leq\frac{1}{2}\liminf_{k\rightarrow\infty}\int_{\mathbb{R}^{N}}(\left\vert
\nabla\left(  su_{k}\right)  \right\vert ^{2}+\left\vert \nabla\left(
tv_{k}\right)  \right\vert ^{2})-\frac{1}{2^{\ast}}\lim_{k\rightarrow\infty
}\int_{\mathbb{R}^{N}}(\mu_{1}|su_{k}|^{{2}^{\ast}}+\mu_{2}|tv_{k}|^{{2}%
^{\ast}})\\
&  \qquad+\lim_{k\rightarrow\infty}(-\lambda_{k})\int_{\mathbb{R}^{N}%
}\left\vert su_{k}\right\vert ^{\alpha}\left\vert tv_{k}\right\vert ^{\beta}\\
&  \leq\liminf_{k\rightarrow\infty}E_{\lambda_{k}}(su_{k},tv_{k})\leq
\liminf_{k\rightarrow\infty}E_{\lambda_{k}}(u_{k},v_{k})=\liminf
_{k\rightarrow\infty}c_{\lambda_{k}}^{\Gamma}\leq\limsup_{k\rightarrow\infty
}c_{\lambda_{k}}^{\Gamma}\leq c_{\infty}^{\Gamma},
\end{align*}
because $E_{\lambda_{k}}(su_{k},tv_{k})\leq E_{\lambda_{k}}(u_{k},v_{k});$ see
Proposition \ref{prop:nehari}(d). It follows that
\[
\lim_{k\rightarrow\infty}(-\lambda_{k})\int_{\mathbb{R}^{N}}\left\vert
u_{k}\right\vert ^{\alpha}\left\vert v_{k}\right\vert ^{\beta}=0
\]
and that
\begin{align*}
c_{\infty}^{\Gamma}  &  =\lim_{k\rightarrow\infty}c_{\lambda_{k}}^{\Gamma
}=\lim_{k\rightarrow\infty}E_{\lambda_{k}}(u_{k},v_{k})=\liminf_{k\rightarrow
\infty}E_{\lambda_{k}}(su_{k},tv_{k})\\
&  \leq\limsup_{k\rightarrow\infty}E_{\lambda_{k}}(su_{k},tv_{k})\leq
\lim_{k\rightarrow\infty}E_{\lambda_{k}}(u_{k},v_{k})=c_{\infty}^{\Gamma}.
\end{align*}
Hence,%
\[
\lim_{k\rightarrow\infty}\int_{\mathbb{R}^{N}}(\left\vert \nabla\left(
su_{k}\right)  \right\vert ^{2}+\left\vert \nabla\left(  tv_{k}\right)
\right\vert ^{2})=\int_{\mathbb{R}^{N}}(\left\vert \nabla\left(  su_{\infty
}\right)  \right\vert ^{2}+\left\vert \nabla\left(  tv_{\infty}\right)
\right\vert ^{2})
\]
and, as $su_{k}\rightharpoonup su_{\infty}$ and\ $tv_{k}\rightharpoonup
tv_{\infty}$ weakly in $D^{1,2}(\mathbb{R}^{N}),$ we conclude that
$u_{k}\rightarrow u_{\infty}$ and $v_{k}\rightarrow v_{\infty}$ strongly in
$D^{1,2}(\mathbb{R}^{N}).$ Consequently,%
\begin{align*}
c_{\infty}^{\Gamma}  &  =\lim_{k\rightarrow\infty}E_{\lambda_{k}}(u_{k}%
,v_{k})\\
&  =\frac{1}{2}\int_{\mathbb{R}^{N}}(\left\vert \nabla u_{\infty}\right\vert
^{2}+\left\vert \nabla v_{\infty}\right\vert ^{2})-\frac{1}{2^{\ast}}%
\int_{\mathbb{R}^{N}}(\mu_{1}|u_{\infty}|^{{2}^{\ast}}+\mu_{2}|v_{\infty
}|^{{2}^{\ast}})=J(u_{\infty}-v_{\infty}),
\end{align*}
and%
\begin{align*}
0  &  =\lim_{k\rightarrow\infty}f_{\lambda_{k}}(u_{k},v_{k})=\int
_{\mathbb{R}^{N}}\left\vert \nabla u_{\infty}\right\vert ^{2}-\mu_{1}%
\int_{\mathbb{R}^{N}}|u_{\infty}|^{{2}^{\ast}},\\
0  &  =\lim_{k\rightarrow\infty}h_{\lambda_{k}}(u_{k},v_{k})=\int
_{\mathbb{R}^{N}}\left\vert \nabla v_{\infty}\right\vert ^{2}-\mu_{2}%
\int_{\mathbb{R}^{N}}|v_{\infty}|^{{2}^{\ast}}.
\end{align*}
This shows that $u_{\infty},v_{\infty}\in\mathcal{M}^{\Gamma},$ and completes
the proof of (a) and (b).

Thus, we have shown that $u_{\infty}-v_{\infty}$ is a minimizer for $J$ on
$\mathcal{E}^{\Gamma}.$ Since the embedding $D^{1,2}(\mathbb{R}^{N})^{\Gamma
}\hookrightarrow L^{2^{\ast}}(\mathbb{R}^{N})$ is compact, $J$ satisfies the
Palais-Smale condition on $\mathcal{M}^{\Gamma}.$ So the same argument given
in \cite{ccn} to prove Lemma 2.6 leads to the conclusion that $u_{\infty
}-v_{\infty}$ is a critical point of $J.$ This proves (c).
\end{proof}

\smallskip

\begin{proof}
[Proof of Theorem \ref{thm:separation}]For $\lambda_{k}\rightarrow-\infty,$
let $(u_{k},v_{k})\in\mathcal{N}_{\lambda_{k}}^{\Gamma}$ satisfy $u_{k}\geq0,$
$v_{k}\geq0$ and $E_{\lambda_{k}}(u_{k},v_{k})=c_{\lambda_{k}}^{\Gamma}.$ By
Proposition \ref{prop:separation}, after passing to a subsequence, we have
that $u_{k}\rightarrow u_{\infty}$ and $v_{k}\rightarrow v_{\infty}$ strongly
in $D^{1,2}(\mathbb{R}^{N}),$ $u_{\infty}\geq0,$ $v_{\infty}\geq0,$
$u_{\infty}v_{\infty}=0,$ and $u_{\infty}-v_{\infty}$ is a nontrivial solution
to the problem (\ref{limitprob}). Then, a well known regularity argument shows
that $u_{\infty}-v_{\infty}\in\mathcal{C}^{1}(\mathbb{R}^{N});$ see, e.g.,
Appendix B in \cite{s}. As $u_{\infty}=(u_{\infty}-v_{\infty})^{+}$ and
$-v_{\infty}=(u_{\infty}-v_{\infty})^{-},$ these functions are continuous and
the sets $\Omega_{1}:=\{x\in\mathbb{R}^{N}:u_{\infty}(x)>0\}$ and $\Omega
_{2}:=\{x\in\mathbb{R}^{N}:v_{\infty}(x)>0\}$ are open. Since $u_{\infty}$ and $v_{\infty}$ are $\Gamma$-invariant, $\Omega_{1}$ and $\Omega_{2}$ are $\Gamma$-invariant and, as $u_{\infty}-v_{\infty}$ is a minimizer of $J$ on $\mathcal{E}^{\Gamma}$, these sets are connected. Moreover, we have that $\overline{\Omega_{1}\cup\Omega_{2}}=\mathbb{R}^{N}$ because, otherwise, $u_{\infty}-v_{\infty}$ would vanish in an open set, contradicting the unique continuation principle. Clearly, $u_{\infty}$ solves the problem%
\[
-\Delta u=\mu_{1}|u|^{{2}^{\ast}-2}u,\text{\qquad}u\in D_{0}^{1,2}(\Omega_{1}),
\]
and $v_{\infty}$ solves the problem%
\[
-\Delta v=\mu_{2}|v|^{{2}^{\ast}-2}v,\qquad v\in D_{0}^{1,2}(\Omega_{2}).
\]
This finishes the proof.
\end{proof}

Next, we prove Proposition \ref{prop:tori}. The proof is based on the following geometric lemma. We write $\mathbb{B}^k$ and $\mathbb{S}^{k-1}$ for the open ball and the unit sphere in $\mathbb{R}^k$.

\begin{lemma}
\label{lem:tori}Let $\Gamma=O(m)\times O(n)$ with $m+n=N+1$, $m,n\geq 2$, and let $U_1$ and $U_2$ be nonempty $\Gamma$-invariant open connected subsets of $\mathbb{S}^N$ such that $\overline{U_1\cup U_2}=\mathbb{S}^N$ and $U_1\cap U_2=\emptyset$. Then, up to relabeling, 
\begin{itemize}
\item[(a)]$\widetilde{U}_1:=U_1\cup (\mathbb{S}^{m-1}\times\{0\})$ and $\widetilde{U}_2:=U_2\cup (\{0\}\times\mathbb{S}^{n-1})$ are open, connected and $\Gamma$-invariant, and $\widetilde{U}_1\cap\widetilde{U}_2=\emptyset$.
\item[(b)]$\widetilde{U}_1$ is $\Gamma$-diffeomorphic to $\mathbb{S}^{m-1}\times\mathbb{B}^n$, $\widetilde{U}_2$ is $\Gamma$-diffeomorphic to $\mathbb{B}^m\times\mathbb{S}^{n-1}$ and their common boundary $\partial\widetilde{U}_1 = \partial\widetilde{U}_2$ is $\Gamma$-diffeomorphic to $\mathbb{S}^{m-1}\times\mathbb{S}^{n-1}$.
\end{itemize}
\end{lemma}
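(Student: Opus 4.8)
The plan is to push the whole problem down to the orbit space, which for $\Gamma=O(m)\times O(n)$ acting on $\mathbb{S}^{N}\subset\mathbb{R}^{m}\times\mathbb{R}^{n}$ is an interval, realized by the continuous $\Gamma$-invariant map $q\colon\mathbb{S}^{N}\to[0,1]$, $q(x,y):=|y|^{2}$. First I would record the elementary properties of $q$: it is surjective; its fibers are exactly the $\Gamma$-orbits, with $q^{-1}(0)=\mathbb{S}^{m-1}\times\{0\}$, $q^{-1}(1)=\{0\}\times\mathbb{S}^{n-1}$, and $q^{-1}(t)=\sqrt{1-t}\,\mathbb{S}^{m-1}\times\sqrt{t}\,\mathbb{S}^{n-1}$ for $t\in(0,1)$ --- here one uses that $O(m)$ and $O(n)$ act transitively on the spheres of $\mathbb{R}^{m}$ and $\mathbb{R}^{n}$. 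A key consequence is that a subset of $\mathbb{S}^{N}$ is $\Gamma$-invariant if and only if it is saturated, i.e.\ of the form $q^{-1}(V)$; in particular $U_{i}=q^{-1}(V_{i})$ with $V_{i}:=q(U_{i})$, and, the complement of a saturated set being saturated, $\mathbb{S}^{N}\smallsetminus U_{i}=q^{-1}([0,1]\smallsetminus V_{i})$.

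Then I would translate the hypotheses into statements about $V_{1},V_{2}\subseteq[0,1]$. Each $V_{i}$ is nonempty and connected (continuous image of a nonempty connected set), hence an interval; it is open in $[0,1]$ because its complement $[0,1]\smallsetminus V_{i}=q(\mathbb{S}^{N}\smallsetminus U_{i})$ is the continuous image of a compact set, hence closed; one has $V_{1}\cap V_{2}=\emptyset$ from $U_{1}\cap U_{2}=\emptyset$ and saturation; and $[0,1]=q(\overline{U_{1}\cup U_{2}})\subseteq\overline{q(U_{1}\cup U_{2})}=\overline{V_{1}\cup V_{2}}$. So $V_{1},V_{2}$ are two disjoint nonempty open intervals of $[0,1]$ with dense union, and an elementary point-set argument shows that, after interchanging $U_{1}$ and $U_{2}$ if necessary, $V_{1}\cup\{0\}=[0,c)$ and $V_{2}\cup\{1\}=(c,1]$ for a single $c\in(0,1)$: two disjoint intervals are linearly ordered, say $\sup V_{1}\le\inf V_{2}$; density forces $\inf V_{1}=0$ and $\sup V_{2}=1$ (otherwise $0$, resp.\ $1$, would lie outside $\overline{V_{1}}\cup\overline{V_{2}}$) and $\sup V_{1}=\inf V_{2}=:c$ (otherwise the gap between the two intervals is uncovered); finally $c\in(0,1)$ because $\{0\}$ and $\{1\}$ are not open in $[0,1]$, so neither $V_{i}$ can degenerate to an endpoint, and openness then also forces $V_1\cup\{0\}=[0,c)$, $V_2\cup\{1\}=(c,1]$ exactly.

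With $c$ fixed, set $\widetilde{U}_{1}:=q^{-1}([0,c))=U_{1}\cup(\mathbb{S}^{m-1}\times\{0\})$ and $\widetilde{U}_{2}:=q^{-1}((c,1])=U_{2}\cup(\{0\}\times\mathbb{S}^{n-1})$. Statement (a) is then immediate: both are preimages under the continuous $\Gamma$-invariant map $q$ of open subsets of $[0,1]$, hence open and $\Gamma$-invariant, and $\widetilde{U}_{1}\cap\widetilde{U}_{2}=q^{-1}([0,c)\cap(c,1])=\emptyset$; connectedness will come out of (b). For (b) I would exhibit the $\Gamma$-equivariant diffeomorphisms explicitly. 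On $\widetilde{U}_{1}$ one has $|x|^{2}=1-|y|^{2}>1-c>0$, so $x\neq0$, and $(x,y)\mapsto(x/|x|,\,y/\sqrt{c})$ is a diffeomorphism onto $\mathbb{S}^{m-1}\times\mathbb{B}^{n}$, with inverse $(\omega,\eta)\mapsto(\sqrt{1-c|\eta|^{2}}\,\omega,\,\sqrt{c}\,\eta)$, clearly equivariant for the product $\Gamma$-action; symmetrically, $(x,y)\mapsto(x/\sqrt{1-c},\,y/|y|)$ is a $\Gamma$-equivariant diffeomorphism from $\widetilde{U}_{2}$ onto $\mathbb{B}^{m}\times\mathbb{S}^{n-1}$. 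Since $q$ is continuous and every point of $q^{-1}(c)$ has both blocks of coordinates nonzero, hence is a limit of points with $|y|^{2}<c$ and of points with $|y|^{2}>c$, one gets $\overline{\widetilde{U}_{1}}=\{|y|^{2}\le c\}$ and $\overline{\widetilde{U}_{2}}=\{|y|^{2}\ge c\}$, so $\partial\widetilde{U}_{1}=\partial\widetilde{U}_{2}=q^{-1}(c)$, and $(x,y)\mapsto(x/\sqrt{1-c},\,y/\sqrt{c})$ identifies this common boundary $\Gamma$-equivariantly with $\mathbb{S}^{m-1}\times\mathbb{S}^{n-1}$. Finally, $\widetilde{U}_{1}$ and $\widetilde{U}_{2}$ are connected because the model spaces $\mathbb{S}^{m-1}\times\mathbb{B}^{n}$ and $\mathbb{B}^{m}\times\mathbb{S}^{n-1}$ are, using $m,n\ge2$.

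All the maps above are visibly smooth with smooth inverses, so the only step that really demands care is the elementary classification of the pair $(V_{1},V_{2})$ --- keeping track of all the endpoint cases so as to land exactly on $[0,c)$ and $(c,1]$ with $c$ strictly interior, and not on a degenerate configuration --- together with the bookkeeping fact that $\Gamma$-invariant subsets of $\mathbb{S}^{N}$ are saturated, which is what makes the passage between $U_{i}$ and $V_{i}$ lossless in both directions.
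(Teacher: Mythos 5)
Your proof is correct and takes essentially the same route as the paper: you reduce to the one-dimensional orbit space (via $q(x,y)=|y|^2$ where the paper uses $\pi(x,y)=(|x|,|y|)$ onto the quarter-circle arc, which is the same thing up to a homeomorphism of the interval), classify the images $V_1,V_2$ as complementary half-open intervals, and pull back. The only difference is that you spell out the elementary interval classification and write down the $\Gamma$-equivariant diffeomorphisms explicitly, details which the paper leaves implicit after identifying $\widetilde U_i=\pi^{-1}(\widetilde A_i)$.
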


\begin{proof}
Consider the function $\pi:\mathbb{S}^N \to \mathbb{R}^2$ given by $\pi(x,y):=(|x|,|y|)$ where $x\in\mathbb{R}^m$, $y\in \mathbb{R}^n$. Then $\pi$ is continuous and $\Gamma$-invariant. Its image is the arc
$$A:=\{(s,t)\in \mathbb{R}^2:\,s,t\geq 0,\,s^2+t^2=1\}.$$
Set $A_i:=\pi(U_i)$. Note that $A_i$ is nonempty, connected and open in $A$ for $i=1,2$. Moreover, $A_1\cap A_2=\emptyset$ and $\overline{A_1\cup A_2}=A$. Therefore, $A_i$ is an arc and, up to relabeling, $\widetilde{A}_1:=A_1\cup\{(1,0)\}$ and $\widetilde{A}_2:=A_2\cup\{(0,1)\}$ are connected and open in $A$, and $A\smallsetminus (\widetilde{A}_1\cup \widetilde{A}_2) = \{(s_0,t_0)\}$ with $s_0>0$ and $t_0>0$, i.e.,
$$\widetilde{A}_1 = \{(s,t)\in A: s>s_0\}\qquad\text{and}\qquad \widetilde{A}_2 = \{(s,t)\in A: s<s_0\}.$$
Then, $\pi^{-1}(\widetilde{A}_i) = \widetilde{U}_i$ and $\pi^{-1}(s_0,t_0) = \partial\widetilde{U}_1 = \partial\widetilde{U}_2$. Therefore, these sets satisfy (a) and (b).
\end{proof}

\begin{proof}
[Proof of Proposition \ref{prop:tori}] Adding a point at infinity and applying the inverse of the stereographic projection to $\Omega_1$ and $\Omega_2$, we obtain two nonempty $\Gamma$-invariant open connected subsets $U_1$ and $U_1$ of $\mathbb{S}^N$ which, thus, satisfy (a) and (b) of Lemma \ref{lem:tori}.

As $m+n=N+1$ and $m,n\geq 2$, the codimension of the sets $\mathbb{S}^{m-1}\times\{0\}$ and $\{0\}\times\mathbb{S}^{n-1}$ in $\mathbb{R}^N$ is at least $2$. Therefore, $D^{1,2}_0(\Omega_i) = D^{1,2}_0(\widetilde{\Omega}_i)$. Hence, $u_{\infty}$ solves the problem
$$-\Delta u=\mu_{1}|u|^{{2}^{*}-2}u,\text{\qquad}u\in D_{0}^{1,2}(\widetilde{\Omega}_{1}),$$
and, by the maximum principle, $u_{\infty}>0$ in $\widetilde{\Omega}_{1}$. As $\Omega_{1}=\{x\in\mathbb{R}^{N}:u_{\infty}(x)>0\}$, we conclude that $\widetilde{\Omega}_1 = \Omega_1$. Similarly, $\widetilde{\Omega}_2 = \Omega_2$, and the claim is proved.
\end{proof}

\appendix

\section{The energy functional on a plane}

\label{sec:appendix}Consider the function%
\[
e(s,t):=a_{1}s^{2}+a_{2}t^{2}-b_{1}s^{p}-b_{2}t^{p}+ds^{\alpha}t^{\beta}%
\]
in $V:=(0,\infty)\times(0,\infty),$ where $a_{i},b_{i},d>0,$ $p>2,$
$\alpha,\beta>1$ and $\alpha+\beta=p.$ We assume that $(1,1)$ is a critical
point of $e.$ Then, as%
\begin{align*}
\frac{\partial e}{\partial s}(s,t)  &  =2a_{1}s-pb_{1}s^{p-1}+d\alpha
s^{\alpha-1}t^{\beta},\\
\frac{\partial e}{\partial t}(s,t)  &  =2a_{2}t-pb_{2}t^{p-1}+d\beta
s^{\alpha}t^{\beta-1},
\end{align*}
we have that%
\begin{equation}
2a_{1}-pb_{1}+d\alpha=0\text{\qquad and\qquad}2a_{2}-pb_{2}+d\beta=0.
\label{eq:coef}%
\end{equation}

\begin{lemma}
\label{lem:a}There exist $0<r<R<\infty$ and $\delta>0$ such that%
\begin{align*}
\delta &  \leq\frac{\partial e}{\partial s}(r,t)\text{\quad and\quad}%
\frac{\partial e}{\partial s}(R,t)\leq-1\text{\qquad for all }t\in\left[
r,R\right]  ,\\
\delta &  \leq\frac{\partial e}{\partial t}(s,r)\text{\quad and\quad}%
\frac{\partial e}{\partial t}(s,R)\leq-1\text{\qquad for all }s\in\left[
r,R\right]  ,
\end{align*}
every critical point of $e$ in $V$ lies in the interior of $Q:=\left[
r,R\right]  \times\left[  r,R\right]  ,$ and $\sup_{V}e=\max_{Q}e.$
\end{lemma}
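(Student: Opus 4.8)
\emph{Proof strategy.} The plan is to build the box $Q$ in three stages: a coercivity estimate that forces $e$ to attain its supremum over the closed quadrant at an interior critical point; an a priori bound trapping every critical point of $e$ in a fixed compact subset of $V$ bounded away from the two axes; and an explicit choice of $r$ and $R$ making the four edge inequalities hold, compatibly with the first two stages.

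For the coercivity I would start from the weighted AM--GM inequality $s^{\alpha}t^{\beta}\le\frac{\alpha}{p}s^{p}+\frac{\beta}{p}t^{p}$, valid since $\frac{\alpha}{p}+\frac{\beta}{p}=1$. Combined with \eqref{eq:coef}, i.e.\ $pb_{1}-d\alpha=2a_{1}$ and $pb_{2}-d\beta=2a_{2}$, it gives
\[
e(s,t)\le a_{1}s^{2}+a_{2}t^{2}-\tfrac{2a_{1}}{p}s^{p}-\tfrac{2a_{2}}{p}t^{p}\;\longrightarrow\;-\infty
\]
as $s\to\infty$ or $t\to\infty$, because $p>2$. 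Since $e$ extends continuously to $[0,\infty)^{2}$ (all exponents being positive), its superlevel sets are compact, so it attains its supremum there at some $P^{\ast}=(s^{\ast},t^{\ast})$. A short argument rules out $s^{\ast}=0$ or $t^{\ast}=0$: if, say, $s^{\ast}=0$ and $t^{\ast}>0$, then for small $s>0$ one has $e(s,t^{\ast})-e(0,t^{\ast})=a_{1}s^{2}-b_{1}s^{p}+d\,s^{\alpha}(t^{\ast})^{\beta}\ge\frac{a_{1}}{2}s^{2}>0$, contradicting maximality (the case $s^{\ast}=t^{\ast}=0$ is even easier). Hence $P^{\ast}\in V$, so $\nabla e(P^{\ast})=0$ and $\sup_{V}e=e(P^{\ast})$ is attained at a critical point.

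Next, for an arbitrary critical point $(s_{0},t_{0})\in V$, dividing $\partial_{s}e(s_{0},t_{0})=0$ by $s_{0}$ gives $pb_{1}s_{0}^{p-2}=2a_{1}+d\alpha s_{0}^{\alpha-2}t_{0}^{\beta}\ge 2a_{1}$, hence $s_{0}\ge(2a_{1}/pb_{1})^{1/(p-2)}=:r_{1}>0$, and symmetrically $t_{0}\ge r_{2}>0$. For an upper bound I would test $\nabla e(s_{0},t_{0})=0$ against $(s_{0},t_{0})$: the identity $s_{0}\,\partial_{s}e+t_{0}\,\partial_{t}e=0$ expands to $2a_{1}s_{0}^{2}+2a_{2}t_{0}^{2}-pb_{1}s_{0}^{p}-pb_{2}t_{0}^{p}+dp\,s_{0}^{\alpha}t_{0}^{\beta}=0$, and after applying the same weighted AM--GM inequality to the cross term and using \eqref{eq:coef} this collapses to $a_{1}(s_{0}^{p}-s_{0}^{2})+a_{2}(t_{0}^{p}-t_{0}^{2})\le 0$. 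Since $x\mapsto x^{p}-x^{2}$ is bounded below on $[0,\infty)$ and tends to $+\infty$, this confines $(s_{0},t_{0})$ to a bounded set: $s_{0},t_{0}\le R_{0}$ for some $R_{0}<\infty$.

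Finally I would pick $R$ large enough that $2a_{i}R(1-R^{p-2})\le-1$ for $i=1,2$ and $R>R_{0}$, pick $r$ small enough that $r^{p-2}\le\frac12$ and $r<\min\{r_{1},r_{2},R\}$, and set $\delta:=r\min\{a_{1},a_{2}\}$ and $Q:=[r,R]^{2}$. On $\{R\}\times[r,R]$ the bounds $t^{\beta}\le R^{\beta}$, $\alpha+\beta=p$ and \eqref{eq:coef} give $\partial_{s}e(R,t)\le 2a_{1}R(1-R^{p-2})\le-1$; on $\{r\}\times[r,R]$ the bound $t^{\beta}\ge r^{\beta}$ gives $\partial_{s}e(r,t)\ge 2a_{1}r(1-r^{p-2})\ge a_{1}r\ge\delta$; the analogous bounds for $\partial_{t}e$ on the top and bottom edges follow by symmetry. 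Thus $\nabla e$ has no zero on $\partial Q$, while the second stage places every critical point of $e$ in $[r_{1},R_{0}]^{2}\subset\mathrm{int}(Q)$; in particular $P^{\ast}\in\mathrm{int}(Q)$, and since $e(P^{\ast})=\sup_{V}e\ge\max_{Q}e\ge e(P^{\ast})$ we get $\sup_{V}e=\max_{Q}e$. The step I expect to carry the weight is the a priori compactness of the critical set, which falls out of the Nehari-type identity together with \eqref{eq:coef}; everything else is a matter of choosing constants and checking that the three choices of $r$ and $R$ are mutually compatible, which they are, since the coercivity bound and the critical-point bound do not interact with the edge estimates.
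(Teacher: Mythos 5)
Your proof is correct, and the boundary estimates in your third stage coincide with the paper's: the paper parametrizes $t=\tau s$ with $\tau\in[0,1]$, so that on the right edge $\{R\}\times[r,R]$ the cross term obeys $d\alpha s^{\alpha-1}t^{\beta}=d\alpha\tau^{\beta}s^{p-1}\le d\alpha s^{p-1}$, which is exactly your bound $t^{\beta}\le R^{\beta}$; the relation \eqref{eq:coef} then collapses the coefficient to $2a_{1}$ in both arguments. Where you diverge is in making explicit two facts that the paper extracts (quite tersely) from the boundary estimates alone. First, you establish coercivity directly, via the weighted AM--GM inequality $s^{\alpha}t^{\beta}\le\tfrac{\alpha}{p}s^{p}+\tfrac{\beta}{p}t^{p}$ combined with \eqref{eq:coef}, concluding that $\sup_{V}e$ is attained at an interior critical point; the paper never writes this down. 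Second, you obtain a self-contained a priori bound on the critical set by testing $\nabla e(s_{0},t_{0})=0$ against $(s_{0},t_{0})$ and applying the same AM--GM inequality — a Nehari-type identity that does not appear in the paper's proof, which instead rules out critical points outside $Q$ by observing that a critical point with, say, $s_{0}\ge R$ and $t_{0}\le s_{0}$ would contradict $\partial_{s}e(s_{0},t_{0})\le-1$. Your version is longer but fills in the coercivity and interior-compactness details that the paper leaves implicit when it asserts $\sup_{V}e=\max_{Q}e$; both proofs ultimately hinge on the same algebraic relation \eqref{eq:coef} to tame the cross term.
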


\begin{proof}
Let $t=\tau s$ with $\tau\in\left[  0,1\right]  .$ Then, from (\ref{eq:coef})
we get that%
\begin{align*}
\frac{\partial e}{\partial s}(s,t)  &  =2a_{1}s-\left(  pb_{1}-d\alpha
\tau^{\beta}\right)  s^{p-1}\\
&  \leq2a_{1}s-\left(  pb_{1}-d\alpha\right)  s^{p-1}\leq-1\text{\quad for all
}s\in\lbrack R_{1},\infty).
\end{align*}
Similarly, if $s=\tau t$ with $\tau\in\left[  0,1\right]  ,$ we have that
\[
\frac{\partial e}{\partial t}(s,t)\leq-1\text{\quad for all }t\in\lbrack
R_{2},\infty).
\]
On the other hand, as%
\[
\frac{\partial e}{\partial s}(s,t)\geq2a_{1}s-pb_{1}s^{p-1}\text{\quad
and\quad}\frac{\partial e}{\partial t}(s,t)\geq2a_{2}t-pb_{2}t^{p-1},
\]
there exist $r,\delta>0$ such that%
\begin{align*}
\frac{\partial e}{\partial s}(s,t)  &  >0\text{ \ if }s\in(0,r]\text{\quad
and\quad}\frac{\partial e}{\partial s}(r,t)\geq\delta\text{\quad for all }%
t\in(0,\infty),\\
\frac{\partial e}{\partial t}(s,t)  &  >0\text{ \ if }t\in(0,r]\text{\quad
and\quad}\frac{\partial e}{\partial t}(s,r)\geq\delta\text{\quad for all }%
s\in(0,\infty).
\end{align*}
Setting $R:=\max\{R_{1},R_{2}\},$ we obtain our claim.
\end{proof}

\begin{lemma}
\label{lem:unique}If every critical point of $e$ in $V$ is a strict local
maximum, then $(1,1)$ is the only critical point of $e$ in $V$ and it is a
global maximum.
\end{lemma}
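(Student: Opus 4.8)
The plan is to exploit the gradient dynamics of $e$ together with the trapping square $Q$ furnished by Lemma \ref{lem:a}. First I would note that on $\partial Q$ the gradient $\nabla e$ points strictly into $Q$: on the edge $\{r\}\times[r,R]$ one has $\partial_s e\ge\delta>0$ and on $\{R\}\times[r,R]$ one has $\partial_s e\le-1<0$, and symmetrically for the two horizontal edges, while at each of the four corners both components of $\nabla e$ have the sign that makes $-\nabla e$ lie in the outward normal cone. Since $Q\subset V$ is compact and $e$ is real-analytic on $V$ (note that $s\mapsto s^{\alpha}$ is real-analytic on $(0,\infty)$), the forward flow $\varphi_{t}$ of $\nabla e$ is defined for all $t\ge0$ and leaves $Q$ positively invariant.

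Next I would record two structural facts. First, the critical set of $e$ in $V$ is finite: it is the common zero set of the real-analytic functions $\partial_{s}e$ and $\partial_{t}e$, hence is either discrete or contains a real-analytic arc; along such an arc $e$ would be constant, so no point of it could be a strict local maximum, contradicting the hypothesis; thus the critical set is discrete, and by Lemma \ref{lem:a} it lies in the compact set $Q$, hence is finite. Second, since $e$ is a strict Lyapunov function for $\varphi_{t}$ and $Q$ is compact and positively invariant, the $\omega$-limit set of every point $w\in Q$ is a nonempty, connected subset of the critical set, hence a single critical point $z^{*}(w)$; moreover $e(w)\le e(z^{*}(w))$ because $e$ is nondecreasing along trajectories. (There are no rest points on $\partial Q$, since $\nabla e\ne0$ there.)

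With these in hand the endgame is short. If $e$ has a single critical point $z_{0}$ — which must be $(1,1)$, since $(1,1)$ is assumed critical — then every point of $Q$ flows to $z_{0}$, so $e\le e(z_{0})$ on $Q$, and by Lemma \ref{lem:a}, $\sup_{V}e=\max_{Q}e=e(z_{0})$; thus $(1,1)$ is the only critical point and a global maximum. So it suffices to rule out a second critical point. Suppose $z_{0}\ne z_{1}$ are both critical, hence both strict local maxima. Each strict local maximum $z$ has a basin $B_{z}:=\{w\in Q:z^{*}(w)=z\}$, which is nonempty and open (a suitable sublevel-neighbourhood of $z$ is forward invariant and contained in $B_{z}$, using that $z$ is an isolated strict maximum). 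Distinct basins are disjoint and their union is $Q$. Since $Q$ is connected and $B_{z_{0}}$ is open, nonempty and proper (it misses the rest point $z_{1}$), it is not closed, so there is $w\in\overline{B_{z_{0}}}\setminus B_{z_{0}}$. The trajectory of $w$ converges to a critical point $z'\ne z_{0}$; if $z'$ were a strict local maximum, an entire neighbourhood of $z'$ would lie in $B_{z'}$, whence a neighbourhood of $w$ would lie in $B_{z'}$, contradicting $w\in\overline{B_{z_{0}}}$ together with $B_{z'}\cap B_{z_{0}}=\emptyset$. Hence $z'$ is a critical point that is not a strict local maximum — contradicting the hypothesis. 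Therefore the critical point is unique.

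The main obstacle is the bookkeeping in the middle step: guaranteeing that gradient trajectories converge to \emph{single} critical points, which is exactly where the real-analyticity of $e$ (forcing the critical set to be discrete) enters; once that is settled the basin argument is routine. As an alternative to the flow argument one can invoke degree theory: the inward-pointing field $\nabla e$ on $\partial Q$ has Brouwer degree $\deg(\nabla e,\operatorname{int}Q,0)=(-1)^{2}=1$, while the local index of $\nabla e$ at an isolated strict local maximum in the plane equals $(-1)^{2}=1$; additivity of the degree then forces exactly one critical point in $\operatorname{int}Q$, necessarily $(1,1)$, and Lemma \ref{lem:a} again yields that it is a global maximum.
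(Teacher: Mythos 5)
Your proof is correct, and it takes essentially the same route as the paper — a gradient-flow argument on the invariant square $Q$ from Lemma~\ref{lem:a} — but the execution differs in two places that are worth noting. Where the paper asserts that the critical points of $e$ in $V$ ``are isolated'' (and hence finite in $Q$) without comment, you supply the missing justification via real-analyticity of $e$ on $(0,\infty)^2$: the critical set is an analytic variety, and a positive-dimensional piece would force $e$ to be constant along an arc, contradicting the strict-local-maximum hypothesis. This is a genuine improvement, since strictness of a local maximum does not by itself imply that the critical point is isolated in the critical set. Second, where the paper builds a continuous surjection $\pi:Q\to\Theta$ (with $\Theta$ a disjoint union of small closed balls around the critical points) by composing the flow with the \emph{entrance time function} $T_\Theta$ and then invokes connectedness of $Q$, you instead partition $Q$ into basins of attraction $B_{z}$ and observe that, because every critical point is a strict local maximum, each $B_{z}$ is open. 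Since the basins are pairwise disjoint, nonempty, and cover $Q$ (every $\omega$-limit set is a connected subset of the finite critical set, hence a point), connectedness of $Q$ forces a single basin, i.e.\ a single critical point. This replaces the continuity of the entrance-time map — a step the paper asserts but does not prove, and which requires a little care — with the more routine openness of basins. Your degree-theoretic alternative (inward-pointing $\nabla e$ on $\partial Q$ has degree $1$; each isolated strict local maximum in the plane has local index $(-1)^2=1$; additivity forces exactly one critical point) is also valid and arguably the cleanest argument of all, though it assumes familiarity with the Poincaré–Hopf index of a gradient at a possibly degenerate isolated maximum.
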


\begin{proof}
Let $Q$ be as in Lemma \ref{lem:a}. Then, $Q$ is strictly positively invariant
under the (positive) gradient flow of $e$, so this flow defines a map%
\[
\eta:\mathbb{R}\times Q\rightarrow Q.
\]
As the critical points of $e$ in $V$ are contained in the interior of $Q$ and
they are isolated, there are finitely many of them, $\xi_{1},...,\xi_{m},$
and, since each of them is a strict local maximum, we may choose
$\varepsilon>0$ such that $\overline{B}_{\varepsilon}(\xi_{i}):=\{x\in
\mathbb{R}^{2}:\left\vert x-\xi_{i}\right\vert \leq\varepsilon\}\subset Q,$
$\overline{B}_{\varepsilon}(\xi_{i})\cap\overline{B}_{\varepsilon}(\xi
_{j})=\emptyset$ if $i\neq j,$ and $\overline{B}_{\varepsilon}(\xi_{i})$ is
strictly positively invariant under the gradient flow of $e.$ Set
$\Theta:=\overline{B}_{\varepsilon}(\xi_{1})\cup\cdots\cup\overline
{B}_{\varepsilon}(\xi_{m}).$ Then, the entrance time function $T_{\Theta
}:Q\rightarrow\mathbb{R},$ defined by%
\[
T_{\Theta}(x):=\inf\{\tau\geq0:\eta(\tau,x)\in\Theta\},
\]
is continuous. Therefore, the map $\pi:Q\rightarrow\Theta$ given by%
\[
\pi(x):=\eta(T_{\Theta}(x),x)
\]
is also continuous and it is surjective. As $Q$ is connected, $\Theta$ cannot
have more than one component. Therefore, $e$ has only one critical point and
it is a global maximum.
\end{proof}

\bigskip
\begin{acknowledgement}
We wish to thank the anonymous referee for his/her careful reading and valuable comments.
\end{acknowledgement}

\bigskip

\end{document}